\begin{document}
\theoremstyle{plain}
\newtheorem{definition}{Definition}
\newtheorem{lemma}[definition]{Lemma}
\newtheorem{theorem}[definition]{Theorem}
\newtheorem{thm}[definition]{Theorem}
\newtheorem{metatheorem}[equation]{(Meta)-theorem}
\newtheorem{proposition}[definition]{Proposition}
\newtheorem{corollary}[definition]{Corollary}
\newtheorem{conjecture}[equation]{Conjecture}
\newtheorem{problem}{Problem}
\newtheorem{assumption}[equation]{Assumption}
\newtheorem{question}[equation]{Question}
\newtheorem{remark}[definition]{Remark}
\newtheorem{exa}[definition]{Example}
\newtheorem{notation}[equation]{Notation}
\newtheorem{example}[equation]{Important example}
\errorcontextlines=0

\newcommand{\R}{\mathbb{R}}
\newcommand{\Op}{{\rm Op}}
\newcommand{\N}{\mathbb{N}}
\renewcommand{\S}{\mathbb{S}}
\newcommand{\Z}{\mathbb{Z}}
\newcommand{\Id}{\text{Id}}
\newcommand{\T}{\mathbb{T}}
\renewcommand{\sp}{\text{sp}}
\newcommand{\sR}{\text{sR}}
\newcommand{\e}{\varepsilon}
\newcommand{\End}{\text{End}}
\newcommand{\phg}{{\rm phg}}
\newcommand{\supp}{\text{supp}}
\renewcommand{\hom}{\text{hom}}
\renewcommand{\leq}{\leqslant}
\renewcommand{\geq}{\geqslant}
\newcommand{\Hess}{{\rm Hess\;} }
\newcommand{\todo}[1]{$\clubsuit$ {\tt #1}}

\theoremstyle{remark}
\newtheorem{note}[equation]{Note}

\title{Propagation of singularities for subelliptic wave equations}
\author{Cyril Letrouit\footnote{DMA, \'Ecole normale sup\'erieure, CNRS, PSL Research University, 75005 Paris (\texttt{cyril.letrouit@ens.psl.eu})}}
\date{\today}
\maketitle
\abstract{Hörmander's propagation of singularities theorem does not fully describe the propagation of singularities in subelliptic wave equations, due to the existence of doubly characteristic points. In the present work, building upon a visionary conference paper by R. Melrose \cite{Mel86}, we prove that singularities of subelliptic wave equations only propagate along null-bicharacteristics and abnormal extremals, which are well-known curves in optimal control theory. As a consequence, we characterize the singular support of subelliptic wave kernels outside the diagonal. These results show that abnormal extremals play an important role in the classical-quantum correspondence between sub-Riemannian geometry and sub-Laplacians. }

\setcounter{tocdepth}{1}
\tableofcontents

\section{Introduction}\label{s:intro}

\subsection{Motivations} \label{s:motivations}
In microlocal analysis, the celebrated propagation of singularities theorem describes an invariance property for the singularities of the (distributional) solutions of a general class of PDEs. The singularities are encapsulated in the $C^\infty$-wave-front set  (see \eqref{e:WF}),  whose projection is the singular support of the solution. Precisely, if $u$ is a distributional solution to a partial (or pseudo-) differential equation $Pu=f$ and $p$ is the principal symbol of $P$, assumed to be real and homogeneous, this theorem asserts that  $WF(u)\setminus WF(f)\subset p^{-1}(0)$, and $WF(u)\setminus WF(f)$ is invariant under the Hamiltonian flow induced by $p$.

\smallskip

This result was first proved in \cite[Theorem 6.1.1]{DH} and \cite[Proposition 3.5.1]{Ens}. Its second part about invariance of the wave-front set is however not totally satisfactory: it does not provide any information in the case where the characteristics of $P$ are not simple, i.e., at points of the cotangent bundle outside the null section for which $p=0$ and $dp=0$ (since at these points, the Hamiltonian vector field of $p$ vanishes). In a very short and impressive conference paper \cite{Mel86}, Melrose sketched the proof of an analogous propagation of singularities result for the wave operator $P=D_t^2-A$ when $A$ is a self-adjoint non-negative real second-order differential operator which is only subelliptic. Such operators $P$ are typical examples for which there exist double characteristic points.

\smallskip

Despite the potential scope of this result, we did not find in the literature any other paper mentioning it, although several papers make reference to other results contained in \cite{Mel86}. The proof provided in \cite{Mel86} is very sketchy, and we thought it would deserve to be written in full details. This is what we do in the first part of the present work (Sections \ref{s:thecones}, \ref{s:poscommutator} and \ref{s:proof}). 

\smallskip      

Then, pushing further the computations of \cite{Mel86} in the case where $A$ is a sub-Laplacian (see Definition \ref{e:sL}), we prove in Sections \ref{s:xyproof}, \ref{s:th1} and Appendix \ref{s:what} that singularities of subelliptic wave equations driven by sub-Laplacians only propagate along null-bicharacteristics and abnormal extremals, which are curves arising as optimal trajectories in control theory.

\smallskip

In summary, the different objects that will be involved in the statements are the following:
\begin{itemize}
\item The \emph{bicharacteristic flow} is the flow induced by the Hamiltonian vector field of $p$. The Hamiltonian curves of $p$ are called \emph{bicharacteristics}.
\item The \emph{null-bicharacteristics} are the bicharacteristics that are included in $p^{-1}(0)$.
\item The \emph{null-rays} are a larger set of trajectories (in the cotangent bundle) along which the singularities propagate for subelliptic wave equations. Null-bicharacteristics are special instances of null-rays. When $P=D_t^2-A$ with $A$ elliptic, all null-rays are null-bicharacteristics. However, this is not always the case for $A$ subelliptic. These null-rays are defined in Definition \ref{d:defray}.
\item The \emph{abnormal extremals}, defined when $A$ is a sub-Laplacian, are the only null-rays that are not null-bicharacteristics. See Section \ref{s:sR} for a precise definition.
\end{itemize}

\subsection{Statements}

We now state our main results. For the sake of coherence, we borrow nearly all notations to \cite{Mel86}. $A$ is a self-adjoint non-negative real second-order differential operator on a smooth compact manifold $X$ without boundary:
\begin{equation} \label{e:selfadj}
\forall u\in C^\infty(X), \qquad (Au,u)=(u,Au)\geq 0
\end{equation}
with
\begin{equation} \label{e:scalar}
(u,v)=\int_X u(x)\overline{v(x)} d\nu,
\end{equation}
where $\nu$ is some positive $C^\infty$ density. The associated norm is denoted by $\|\cdot\|$.

\smallskip

We also assume that $A$ is subelliptic, in the following sense:
 there exist a (Riemannian) Laplacian $\Delta$ on $X$ and $c,s>0$ such that 
\begin{equation} \label{e:subelliptic}
\forall u\in C^\infty(X), \qquad \|(-\Delta)^{s/2}u\|^2\leq c((Au,u)+\|u\|^2).
\end{equation}
Finally, we assume that $A$ has vanishing subprincipal symbol.

\smallskip

The assumption \eqref{e:selfadj} implies that $A$ has a self-adjoint extension with the domain
\begin{equation*}
\mathscr{D}(A)=\{u\in\mathcal{D}'(X); \ Au\in L^2(X)\}.
\end{equation*}
 By the spectral theorem, for any $t\in\R$, the self-adjoint operator
\begin{equation*}
G(t)=A^{-1/2}\sin(tA^{1/2})
\end{equation*}
is a well-defined  operator bounded on $L^2(X)$, in fact it maps $L^2(X)$ into $\mathscr{D}(A^{1/2})$. Together with the self-adjoint operator $G'(t)=\cos(tA^{1/2})$, this allows to solve the Cauchy problem for the wave operator $P=D_t^2-A$ where $D_t=\frac{1}{i}\partial_t$:
\begin{equation} \label{e:subwave}
\left\lbrace \begin{array}{ll}
D_t^2u-Au=0 &\qquad \text{in } \R\times X, \\
(u_{|t=0},\partial_tu_{|t=0})=(u_0,u_1) & \qquad \text{in } X
\end{array}\right.
\end{equation}
by
\begin{equation*}
u(t,x)=G'(t)u_0+G(t)u_1.
\end{equation*}
For $(u_0,u_1)\in\mathscr{D}(A^{1/2})\times L^2(X)$, we have $u\in C^0(\R;\mathscr{D}(A^{1/2}))\cap C^1(\R;L^2(X))$.

\smallskip

For $f\in\mathcal{D}'(Y)$ a distribution on a manifold $Y$ (equal to $X$, $\R\times X$ or $\R\times X\times X$ in the sequel), we denote by $WF(f)$ the usual Hörmander wave-front set (see \cite{FIOs}):
\begin{equation} \label{e:WF}
WF(f)=\{(y,\eta)\in T^*Y\setminus 0, \ \exists A\in \Psi_{\phg}^0(Y), \text{$A$ elliptic at $(y,\eta)$ and $Af\in C^\infty(Y)$}\}.
\end{equation}
Here an in all the sequel $T^*Y\setminus 0$ denotes the cotangent bundle from which the null-section has been removed. The set $\Psi_{\phg}^0(Y)$ is the set of $0$-th order polyhomogeneous pseudodifferential operators (see Appendix \ref{a:pseudo}). We also recall that the projection through the canonical projection onto $Y$ of $WF(f)$ is the singular support of $f$.

\smallskip

Section 6 in \cite{Mel86} is a sketch of proof for the following statement which characterizes the propagation of singularities in \eqref{e:subwave}:
\begin{thm} \label{t:singprop} 
Let $A$ be a self-adjoint non-negative real second-order differential operator which is subelliptic \eqref{e:subelliptic} and has vanishing subprincipal symbol.
Let $t\mapsto u(t)$ be a solution of \eqref{e:subwave}. For any $t>0$, if $(x,\xi)\in WF(u(0))$ then there exists $(y,\eta)\in WF(u(-t))\cup WF(\partial_tu(-t))$ such that $(y,\eta)$ and $(x,\xi)$ can be joined by a null-ray of length $t$.
\end{thm}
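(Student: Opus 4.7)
The plan is to work microlocally on the space-time cotangent bundle $T^*(\R\times X)\setminus 0$ with the operator $P=D_t^2-A$, whose principal symbol is $p(t,x,\tau,\xi)=-\tau^2+a(x,\xi)$, where $a\geq 0$ is the principal symbol of $A$. Since $a\geq 0$, the set of doubly characteristic points is $\Sigma_2=\{\tau=0,\ a(x,\xi)=0,\ da(x,\xi)=0\}$. I would first reduce the statement to a propagation theorem for the space-time wave front set $WF(u)\subset T^*(\R\times X)\setminus 0$: namely, that $WF(u)\setminus\{\tau=0\}$ is invariant under the null-rays of $P$ lifted to $\R\times X$, and then recover the statement about $WF(u(0))$ versus $WF(u(-t))\cup WF(\partial_t u(-t))$ by a restriction/trace argument using $Pu=0$ together with the energy estimate $u\in C^0(\R;\mathscr{D}(A^{1/2}))\cap C^1(\R;L^2(X))$.

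On the simply characteristic part $\{p=0\}\setminus\Sigma_2$ the classical H\"ormander theorem \cite{DH,Ens} gives propagation along null-bicharacteristics, which are null-rays, so there is nothing to prove. The core of the argument is to extend invariance across and within $\Sigma_2$. I would use a positive commutator scheme: along a candidate null-ray $s\mapsto \gamma(s)$, one constructs a one-parameter family $B_s\in\Psi_\phg^m(\R\times X)$ of test operators whose principal symbols are cut-offs supported in conic neighborhoods $\mathcal{C}_s$ of $\gamma(s)$ shrinking appropriately as $s$ moves, and computes
\begin{equation*}
\frac{d}{ds}(B_su,u) = \bigl(i[P,B_s]u,u\bigr) + \bigl((\partial_sB_s)u,u\bigr) + 2\,{\rm Im}(B_su,Pu),
\end{equation*}
where the last term vanishes since $Pu=0$. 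The operators $B_s$ are to be chosen so that, modulo lower order terms and a remainder supported away from $\gamma$ (which one controls by inductive assumption on the absence of $WF(u)$ there), the combination $i[P,B_s]+\partial_sB_s$ is non-negative by a sharp G\aa rding inequality. This yields a differential inequality giving $WF$ regularity at $\gamma(s_1)$ from $WF$ regularity at $\gamma(s_0)$.

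The delicate part, and the content of Sections \ref{s:thecones}--\ref{s:proof}, is the cone construction. Near simply characteristic points one can use the standard Melrose--Sj\"ostrand cones around the Hamilton flow of $p$. On $\Sigma_2$, however, the Hamilton vector field $H_p$ vanishes, so one must design cones that are adapted to the subelliptic homogeneity encoded in \eqref{e:subelliptic}: in the direction of a null-ray they must be narrow enough for $\partial_sB_s$ to dominate, while remaining wide enough transversally for the commutator $[P,B_s]$ to be controllable using the vanishing of $a$ on the characteristic variety. The subellipticity exponent $s$ from \eqref{e:subelliptic} governs how the cones must shrink; the vanishing subprincipal symbol hypothesis ensures that the next-order term in the commutator does not spoil the positivity.

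The main obstacle will be the positivity of $i[P,B_s]+\partial_s B_s$ on the doubly characteristic set. Away from $\Sigma_2$ the term $H_pb_s$ handles the propagation, but on $\Sigma_2$ we have $H_p\equiv 0$ and one must exploit the Hessian of $p$, hence of $a$, to generate propagation. Concretely, the direction of a null-ray at a point of $\Sigma_2$ is determined by the kernel structure of $\Hess a$ (this is where abnormal extremals appear when $A$ is a sub-Laplacian), and one must design $\partial_sb_s$ to match precisely these directions so that the G\aa rding argument closes. The final step is to glue the local propagation statements along a null-ray connecting $(x,\xi)$ to $WF(u(-t))\cup WF(\partial_t u(-t))$, using a connectedness argument in the parameter $s\in[0,t]$ to conclude that $(x,\xi)\notin WF(u(0))$ whenever no such null-ray exists.
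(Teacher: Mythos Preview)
Your proposal has the right overall architecture --- a positive commutator argument with test operators microlocalized near a null-ray --- but there are two genuine gaps.

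First, the displayed identity for $\frac{d}{ds}(B_su,u)$ is not correct as written: if $u$ does not depend on $s$, then the left side is simply $((\partial_sB_s)u,u)$, and the commutator term $(i[P,B_s]u,u)$ does not appear from differentiation. The actual mechanism is that for $P$ self-adjoint and $Pu=0$ one has $(i[P,B]u,u)=0$ directly, and one then expands $i[P,B]$ symbolically into a ``good'' part with a sign and a ``remainder'' part supported where regularity is already known. The paper does not use a one-parameter family $B_s$ at all: it takes a single multiplier $L=\Op(\Phi)D_t$, with $\Phi(t,x,\xi)$ independent of $\tau$ and chosen to be a \emph{time function} (non-increasing along all cones $\Gamma_m$), computes $2\,\mathrm{Im}(Pu,\Op(\Phi)D_tu)=(Cu,u)$, and obtains $\sigma_p(C)=\tau H_p\Phi-\Phi_t'p$. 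The key algebraic step (Lemma~\ref{l:falselemma}) converts the time-function property $\tau H_p\Phi\leq 0$ on $\{p\geq -2\delta a\}$ into the pointwise bound $\tau H_p\Phi\leq \Phi_t'(p+2\delta a)$, which together with the decomposition $\Phi_t'=\Psi_1^2-\Psi_0^2$ produces $C=C'+\text{(controlled terms)}-\delta(\Op(\Psi_0)A\Op(\Psi_0)+D_t\Op(\Psi_0)^2D_t)$ with $\sigma_p(C')\leq 0$ and $\sigma_{\mathrm{sub}}(C')=0$. None of this involves shrinking cones or a parametrized family; the subellipticity exponent $s$ plays no role in designing the cones.

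Second, and more seriously, the sharp G{\aa}rding inequality is not enough to close the argument. The operator $C'$ above has order $2+\alpha$ with non-positive principal symbol \emph{and vanishing subprincipal symbol}; the paper applies the Fefferman--Phong inequality to gain two orders, i.e.\ $(C'u,u)\leq c\|u\|_{H^{\alpha/2}}^2$. Sharp G{\aa}rding would only give $(C'u,u)\leq c\|u\|_{H^{(1+\alpha)/2}}^2$. The point is that subellipticity is used only at the very end, via $(A\Op(\Psi_0)u,\Op(\Psi_0)u)\geq c\|(-\Delta)^{s/2}\Op(\Psi_0)u\|^2-\|\Op(\Psi_0)u\|^2$, to extract a gain of $s$ in Sobolev regularity at each step of the bootstrap; since for a step-$k$ sub-Laplacian one has $s=1/k\leq 1/2$, a one-order gain from sharp G{\aa}rding would not allow the iteration to progress. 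The vanishing of the subprincipal symbol of $A$ is precisely what makes $\sigma_{\mathrm{sub}}(C')=0$, hence what makes Fefferman--Phong applicable.
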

\noindent As mentioned above, null-rays will be defined in Definition \ref{d:defray}.

An important class of examples of operators $A$ satisfying \eqref{e:selfadj}, \eqref{e:scalar}, \eqref{e:subelliptic} and with vanishing sub-principal symbol is given by sub-Laplacians (or Hörmander's sums of squares, see \cite{RS76} or \cite{LL21}):
\begin{definition} \label{e:sL}
A sub-Laplacian is an operator of the form 
\begin{equation}\label{e:AsL}
A=\sum_{i=1}^K Y_i^*Y_i
\end{equation}
for some smooth vector fields $Y_i$ on $X$ satisfying Hörmander's condition: the Lie algebra generated by $Y_1,\ldots,Y_K$ is equal to the whole tangent bundle $TX$. In \eqref{e:AsL}, $Y_i^*$ denotes the adjoint of $Y_i$ for the scalar product \eqref{e:scalar}. The vector fields $Y_i$ are not assumed to be independent. 
\end{definition}

If $A$ is a sub-Laplacian, the null-rays of Theorem \ref{t:singprop} have a particularly simple geometric interpretation. In this case, denoting by $a$ the principal symbol of $A$ and by $H_a$ the associated Hamiltonian vector field, Theorem \ref{t:singprop} can be reformulated as follows (the notions of abnormal extremal lift, singular curve, and associated length are introduced in Section \ref{s:sR}):
\begin{corollary}\label{c:singpropag}
Assume that $A$ is a sub-Laplacian as in Definition \ref{e:sL}. Let $t\mapsto u(t)$ be a solution of \eqref{e:subwave}. For any $t>0$, if $(x,\xi)\in WF(u(0))$ then there exists $(y,\eta)\in WF(u(-t))\cup WF(\partial_tu(-t))$ such that $(y,\eta)$ and $(x,\xi)$ can be joined 
\begin{itemize}
\item either by an Hamiltonian curve: $(x,\xi)=e^{tH_a}(y,\eta)$;
\item or by an abnormal extremal lift of a singular curve of length $\geq t$.
\end{itemize}
\end{corollary}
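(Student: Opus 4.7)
The plan is to deduce Corollary \ref{c:singpropag} from Theorem \ref{t:singprop} by computing explicitly, in the sub-Laplacian case, what the null-rays from Definition \ref{d:defray} look like. First I would apply Theorem \ref{t:singprop}: given $t>0$ and $(x,\xi)\in WF(u(0))$, it furnishes some $(y,\eta)\in WF(u(-t))\cup WF(\partial_t u(-t))$ and a null-ray $\gamma$ of length $t$ connecting the two. Everything then reduces to showing that, when $A=\sum Y_i^*Y_i$, a null-ray is necessarily of exactly one of the two types listed in the corollary.

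The key computation takes place in the characteristic set of $P=D_t^2-A$, whose principal symbol is $p(t,x,\tau,\xi)=-\tau^2+a(x,\xi)$ with $a(x,\xi)=\sum_i\langle\xi,Y_i(x)\rangle^2$. I would split $p^{-1}(0)\setminus 0$ into its simple part $\Sigma_1=\{p=0,\ dp\neq 0\}$ and its doubly characteristic part $\Sigma_2=\{p=0,\ dp=0\}$. A direct differentiation of $p$ shows that on $\Sigma_2$ one must have $\tau=0$ and $a(x,\xi)=0$; since $a$ is a sum of squares, $a(x,\xi)=0$ forces $\langle\xi,Y_i(x)\rangle=0$ for every $i$, which automatically entails $\partial_\xi a=2\sum_i\langle\xi,Y_i\rangle Y_i=0$ and $\partial_x a=0$. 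Hence
\[
\Sigma_2=\{(t,x,\tau,\xi)\ :\ \tau=0,\ \langle\xi,Y_i(x)\rangle=0\ \text{for all }i\},
\]
which is precisely the set of covectors annihilating $\mathrm{span}(Y_1,\dots,Y_K)$, i.e.\ the characteristic covectors of the sub-Riemannian structure.

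On $\Sigma_1$, null-rays coincide with null-bicharacteristics of $P$, and these project to integral curves of $H_a$ on $T^*X$, parametrized so that one unit of null-ray length corresponds to one unit of time $t$; this gives the first alternative $(x,\xi)=e^{tH_a}(y,\eta)$. On $\Sigma_2$, the Hamiltonian vector field $H_p$ vanishes, and the null-ray segments there are, by Definition \ref{d:defray}, exactly curves whose $T^*X$-projection annihilates all the $Y_i$ and whose dynamics are dictated by the sub-Riemannian abnormal condition introduced in Section \ref{s:sR}. These are by definition the abnormal extremal lifts of singular curves, and the null-ray length is identified with the sub-Riemannian length of the underlying singular curve.

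The last step, and the main subtlety, is to argue that a null-ray cannot mix the two regimes: on $\Sigma_1$ the flow has $\dot t=-2\tau\neq 0$, so $t$ is strictly monotonic and the ray cannot reach $\{\tau=0\}\supset\Sigma_2$; conversely a ray starting in $\Sigma_2$ has $\tau\equiv 0$, hence is confined to $\Sigma_2$ for its whole life. Thus $\gamma$ is either entirely in $\Sigma_1$ (Hamiltonian alternative) or entirely in $\Sigma_2$ (abnormal alternative), which yields precisely the dichotomy of the corollary. The only delicate point I anticipate is the careful identification of the reparametrization: checking that in the abnormal case the sub-Riemannian length of the singular curve is $\geq t$ (rather than $=t$) reflects the fact that the null-ray spends ``time $t$'' at $\tau=0$ while the abnormal extremal is parametrized by arclength, so the inequality comes from the freedom in the internal parametrization of $\Sigma_2$-segments.
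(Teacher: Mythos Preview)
Your overall architecture is right and matches the paper's: apply Theorem \ref{t:singprop}, observe that $\tau$ is constant along a null-ray, and split into the cases $\tau\neq 0$ (bicharacteristic) and $\tau=0$ (abnormal). But the heart of the argument is missing in your $\Sigma_2$ case.

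You write that on $\Sigma_2$ the null-ray's dynamics ``are dictated by the sub-Riemannian abnormal condition\ldots\ These are by definition the abnormal extremal lifts.'' This is not by definition. A null-ray on $\Sigma_{(2)}$ is, by Definition \ref{d:defray}, a Lipschitz curve whose derivative lies in the cones $\Gamma_m$ of \eqref{e:defgammam}, which are built from the Hessian $a_m$ and a symplectic polar. Nothing in that definition mentions $\mathcal{D}^\perp$ or $\ker(\overline{\omega}_X)$. The paper bridges this gap in two steps that you have skipped entirely. First (Lemma \ref{l:eqamg}) it computes, in the sub-Laplacian case, that $a_m^*(\mathcal{I}(b))=g(d\pi(b))$, which turns \eqref{e:gammam2} into the explicit formula \eqref{e:gammam3}: $\Gamma_m=\R^+(\partial_t+B)$ with $B=\{b\in\ker(a_m)^{\perp_{\omega_X}}:\ g(d\pi(b))\leq 1\}$. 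Second (Proposition \ref{p:diffrays}(ii)) it uses this to check that if $\gamma$ stays in $\Sigma_{(2)}$, then writing $\dot\gamma=c(\partial_t+b)$ one has simultaneously $b\in T\mathcal{D}^\perp=\ker(a_m)$ (because the curve remains in $\{a=0\}$) and $b\in\ker(a_m)^{\perp_{\omega_X}}$ (from \eqref{e:gammam3}); the intersection of these two spaces is precisely $\ker(\overline{\omega}_X)$, which is the characteristic condition of Definition \ref{d:charac}. Only then does Theorem \ref{t:singprojchar} let you call the projection an abnormal extremal lift. Without these two computations your $\Sigma_2$ paragraph is an assertion, not a proof.

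Two smaller points. Your ``cannot mix regimes'' argument is stated incorrectly: monotonicity of $t$ has nothing to do with staying away from $\{\tau=0\}$. The right reason is simply that $\dot\tau=0$ along any null-ray (on $\Sigma_1$ because $H_p\tau=-\partial_t p=0$; on $\Sigma_{(2)}$ because $d\tau(v)=0$ for every $v\in\Gamma_m$ by \eqref{e:gammam3}), so $\tau$ is a conserved quantity. And on the length inequality: the constraint $g(d\pi(b))\leq 1$ in \eqref{e:gammam3} says the projected singular curve is traversed at sub-Riemannian speed $\leq 1$; that is where the comparison between $t$ and the length of the singular curve comes from, not from any arclength reparametrization of the abnormal extremal.
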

A weakness of Corollary \ref{c:singpropag} is that it only describes ``from which region of phase space a singularity possibly comes'', but does not assert that singularities effectively propagate along abnormal extremal lifts of singular curves. In particular, the inequality $\geq t$ in the last part of the statement means that singularities could possibly propagate at any speed between $0$ and $1$ along singular curves, but does not prove that it is effectively the case. In a joint work with Yves Colin de Verdi\`ere \cite{CdVL21}, we give explicit examples of initial data of a subelliptic wave equation whose singularities effectively propagate at any speed between $0$ and $1$ along a singular curve. The sub-Laplacian which we use in \cite{CdVL21} is called the Martinet sub-Laplacian (see Example \ref{e:Martinet}). The propagation at speeds between $0$ and $1$ along singular curves is in strong contrast with the propagation ``at speed $1$'' along the integral curves of $H_a$ (as in Hörmander's theorem recalled above).

\smallskip

Theorem 1.8 in \cite{Mel86} (given without proof in \cite{Mel86}, since the sketch of proof in \cite[Section 6]{Mel86} in fact corresponds to Theorem \ref{t:singprop}), which we provide here only in the context of sub-Laplacians, concerns the Schwartz kernel $K_G$ of $G$, i.e., the distribution $K_G\in \mathcal{D}'(\R\times X\times X)$ defined by
\begin{equation}\label{e:Schwartzkernel}
\forall u\in C^\infty(X), \qquad G(t)u(x)=\int_X K_G(t,x,y)u(y)dy.
\end{equation}

\begin{thm} \label{t:inclusion}
Assume that $A$ is a sub-Laplacian as in Definition \ref{e:sL}. Then $WF(K_G)$ is contained in the set of $(t,x,y,\tau,\xi,-\eta)\in T^*(\R\times X\times X)\setminus 0$ such that the following two conditions are satisfied:
\begin{enumerate}[(i)]
\item $\tau^2=a(x,\xi)=a(y,\eta)$;
\item $(y,\eta)$ and $(x,\xi)$ can be joined 
\begin{itemize}
\item either by an Hamiltonian curve: $(x,\xi)=e^{tH_a}(y,\eta)$;
\item or by an abnormal extremal lift of a singular curve of length $\geq t$.
\end{itemize}
\end{enumerate}
\end{thm}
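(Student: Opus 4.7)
The plan is to view $K_G$ as a distribution on $\R\times X\times X$ annihilated by two wave operators---one in the $x$ variable, one in the $y$ variable---and to feed this microlocal information into Corollary~\ref{c:singpropag}. First I would record that, by self-adjointness of $G(t)$ (spectral theorem) and reality of $A$, the kernel is real and symmetric: $K_G(t,x,y)=K_G(t,y,x)$. Since $G(t)$ solves the wave equation, this immediately yields the two distributional equations
\[
(D_t^2-A_x)K_G=0,\qquad (D_t^2-A_y)K_G=0,
\]
with Cauchy data $K_G|_{t=0}=0$ and $\partial_tK_G|_{t=0}=\delta(x-y)$. The basic fact that the wave-front set of a distribution killed by a (pseudo)differential operator is contained in its characteristic variety, together with the evenness of the sub-Laplacian symbol $a$ in the cotangent variable, then forces $\tau^2=a(x,\xi)=a(y,\eta)$ on $WF(K_G)$, which is condition (i).

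For condition (ii) I would run the propagation of singularities backwards to $t=0$. Fix $(t_0,x_0,y_0,\tau_0,\xi_0,-\eta_0)\in WF(K_G)$ with $t_0>0$. Applying a distributional version of Theorem~\ref{t:singprop} in the $(t,x)$ variables---with $(y,-\eta)$ held fixed, since they are conserved by the bicharacteristic flow of $p_x=\tau^2-a(x,\xi)$ extended trivially in the $y$-variables---produces a null-ray of length $t_0$ starting at some $(0,x_1,y_0,\tau_0,\xi_1,-\eta_0)\in WF(K_G)$ and ending at $(t_0,x_0,y_0,\tau_0,\xi_0,-\eta_0)$. The Cauchy data, combined with condition (i) and the Taylor expansion $K_G(t,x,y)=t\,\delta(x-y)+O(t^2)$ near $t=0$, force
\[
WF(K_G)\cap\{t=0\}\subset\{(0,x,x,\tau,\xi,-\xi):\xi\neq 0,\ \tau^2=a(x,\xi)\},
\]
so that $x_1=y_0$ and $\xi_1=\eta_0$. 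Hence $(y_0,\eta_0)$ and $(x_0,\xi_0)$ are joined by a null-ray of length $t_0$, and Corollary~\ref{c:singpropag} identifies this null-ray as either $e^{t_0H_a}$ or as an abnormal extremal lift of a singular curve of length $\geq t_0$, which is condition (ii).

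The hard part will be transferring Theorem~\ref{t:singprop}, which is stated for Cauchy solutions with $\mathscr{D}(A^{1/2})\times L^2(X)$ data, to the kernel $K_G$, whose initial velocity $\delta(x-y)$ is purely distributional and carries the extra variable $y$. The cleanest path is to revisit the positive-commutator argument of Sections~\ref{s:thecones}--\ref{s:proof} and observe that it actually delivers a propagation statement for arbitrary $u\in\mathcal{D}'(\R\times X)$ with $(D_t^2-A)u\in C^\infty$; then $y$ plays no microlocal role and is simply a passive spectator along the flow. An alternative is to convolve $K_G$ in the $y$ variable with an $L^2$ bump microlocally concentrated near $(y_0,\eta_0)$, apply Corollary~\ref{c:singpropag} to the resulting finite-energy wave-equation solution, and recover the statement in the limit. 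A secondary but standard technical point is the precise description of $WF(K_G)\cap\{t=0\}$, which follows by combining the explicit Cauchy data with the characteristic constraint from (i).
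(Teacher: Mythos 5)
There is a genuine gap at the heart of your plan: you want to apply (a distributional version of) Theorem \ref{t:singprop} to the equation $(D_t^2-A_x)K_G=0$ on $\R\times X\times X$, treating $(y,\eta)$ as a passive spectator. But Theorem \ref{t:singprop} requires the subellipticity hypothesis \eqref{e:subelliptic} for the operator on the manifold where the distribution lives, and $A_x\otimes\mathrm{Id}_y$ is \emph{not} subelliptic on $X\times X$ (the vector fields $Y_i$ in the $x$-factor satisfy no H\"ormander condition in the $y$-directions). The positive-commutator proof genuinely uses \eqref{e:subelliptic} — it is what converts the Fefferman--Phong estimate into the Sobolev gain $\|(-\Delta)^{s/2}\Op(\Psi_0)u\|$ that drives the bootstrap in Lemma \ref{e:propagregu} — so there is no mere ``observation'' extending the theorem to this degenerate operator, and the doubly characteristic points $\tau=a(x,\xi)=0$, which are the whole point of the paper, are exactly where the classical theorem and the heuristic of a frozen $(y,\eta)$ give no information. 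Your fallback (convolving in $y$ with a microlocalized bump and ``recovering the statement in the limit'') yields wave-front information about $G(t)\chi$ for each test function $\chi$, but passing from that to the wave front set of the kernel itself, jointly in $(x,y)$ and including the $\tau$-directions, is not a limiting triviality and is left unproved; likewise your description of $WF(K_G)\cap\{t=0\}$ via a Taylor expansion $K_G=t\,\delta(x-y)+O(t^2)$ is only heuristic.

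The paper's proof avoids all of this with a different device: it sets $A^\otimes=\tfrac12(A_x\otimes\mathrm{Id}_y+\mathrm{Id}_x\otimes A_y)$, which \emph{is} a sub-Laplacian on $X\times X$ (and annihilates $K_G$ by symmetry of the kernel), so Theorem \ref{t:singprop} applies legitimately with data $WF(K_G(0))=\emptyset$, $WF(\partial_tK_G(0))=N^*(\mathrm{diag})$. The price is geometric: null-rays for $A^\otimes$ may move in both factors simultaneously, subject only to $\tfrac12(g^1(\dot\gamma_1)+g^2(\dot\gamma_2))\le 1$, so one must prove the inclusion \eqref{e:inclWF} by reversing time and momentum along the second factor, concatenating with the first, and reparametrizing so that each piece travels at unit speed, the length budget $s_1\le t$ coming from the displayed integral estimate. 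Finally, the constraint on $\tau$ (your condition (i)) and the passage from $WF(K_G(t))$ at fixed times to $WF(K_G)$ as a space-time distribution is handled by the Claim about $\partial_t^{2N}K_G$, which solves the same wave equation with data $(A^\otimes)^N\delta_{x-y}$. These three ingredients — the symmetrized operator, the ray-conversion lemma, and the $\partial_t^{2N}$ argument — are precisely what your proposal is missing.
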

Theorem \ref{t:inclusion} will be deduced from Corollary \ref{c:singpropag} by considering $K_G$ itself as a solution of a subelliptic wave equation. 
The projections on $M$ of integral curves of $H_a$ are called \emph{normal geodesics}. By an adequate projection, we obtain the following corollary in the spirit of the Duistermaat-Guillemin trace formula \cite{DG75}:
\begin{corollary} \label{c:xy}
We fix $x,y\in X$ with $x\neq y$. We denote by $\mathscr{L}$ the set of lengths of normal geodesics from $x$ to $y$ and by $T_{s}$ the minimal length of a singular curve joining $x$ to $y$. Then $\mathscr{G}:t\mapsto K_G(t,x,y)$ is well-defined as a distribution on $(-T_{s},T_{s})$, and
\begin{equation*}
{\rm Sing Supp}(\mathscr{G})\subset \mathscr{L}\cup -\mathscr{L}.
\end{equation*}
\end{corollary}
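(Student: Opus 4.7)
The plan is to derive the corollary from Theorem \ref{t:inclusion} by viewing $\mathscr{G}$ as the pullback of $K_G$ under the inclusion $\iota: t \mapsto (t,x,y)$ and analyzing which wavefront directions in $T^*(\R\times X\times X)$ survive this restriction over the interval $(-T_s, T_s)$.

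For the well-definedness of $\mathscr{G}$, I will verify that $WF(K_G)$ is disjoint from the conormal bundle $N^\perp$ of $N := \R \times \{(x,y)\}$ over this interval, so that H\"ormander's pullback theorem applies. A covector in $N^\perp \setminus 0$ has the form $(t,x,y,0,\xi,-\eta)$ with $(\xi,\eta) \neq 0$; if it lies in $WF(K_G)$, then Theorem \ref{t:inclusion}(i) forces $a(x,\xi) = a(y,\eta) = 0$. In case A of (ii), the non-negativity of $a$ makes $(y,\eta)$ a critical point of $a$, so $H_a(y,\eta) = 0$; hence $(x,\xi) = e^{tH_a}(y,\eta) = (y,\eta)$ and $x = y$, contradicting $x \neq y$. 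In case B, the abnormal extremal lift projects to a portion of a singular curve from $y$ to $x$; since propagation along singular curves occurs at speed at most $1$ (the content of the ``length $\geq t$'' inequality in Corollary \ref{c:singpropag}, as explained in the commentary following it), this portion has sub-Riemannian length at most $|t|$. But any such portion between $y$ and $x$ has length at least $T_s$, forcing $|t| \geq T_s$ in contradiction with $|t| < T_s$.

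For the inclusion ${\rm SingSupp}(\mathscr{G}) \subset \mathscr{L} \cup -\mathscr{L}$, I use the standard bound $WF(\mathscr{G}) \subset \iota^* WF(K_G)$ provided by the pullback theorem. If $t \in {\rm SingSupp}(\mathscr{G}) \cap (-T_s, T_s)$, there exist $\tau \neq 0$ and $(\xi,\eta)$ with $(t,x,y,\tau,\xi,-\eta) \in WF(K_G)$; condition (i) gives $a(x,\xi) = \tau^2 \neq 0$, so $(x,\xi)$ lies off the characteristic set $\{a = 0\}$. Since cotangent vectors along any abnormal extremal annihilate every $Y_i$ and hence make $a = \sum \langle \cdot, Y_i \rangle^2$ vanish, this rules out case B of (ii). Case A then produces a null-bicharacteristic of $p = \tau^2 - a$ connecting $(0,y,\tau,\eta)$ to $(t,x,\tau,\xi)$, whose projection to $X$ is a normal geodesic from $y$ to $x$ parametrized by the wave time $t$ at unit sub-Riemannian speed, hence of sub-Riemannian length $|t|$. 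Therefore $|t| \in \mathscr{L}$ and $t \in \mathscr{L} \cup -\mathscr{L}$. The main delicate step is the exclusion of case B in the well-definedness part, which hinges on correctly interpreting ``length $\geq t$'' in Definition \ref{d:defray} and Section \ref{s:sR} as encoding speed-at-most-one propagation along singular curves; once that is in place, the remainder is routine microlocal restriction theory combined with the elementary characterization of abnormal extremals as confined to $\{a = 0\}$.
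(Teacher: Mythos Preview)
Your proposal is correct and follows essentially the same approach as the paper's proof in Section~\ref{s:singsuppG}: both restrict $K_G$ along the map $t\mapsto (t,x,y)$, check the conormal condition for H\"ormander's pullback theorem using Theorem~\ref{t:inclusion}, and then observe that for $|t|<T_s$ only the null-bicharacteristic alternative survives, yielding normal geodesics of length $|t|$. Your write-up is more explicit about the case analysis (the paper simply invokes Proposition~\ref{p:diffrays}), and your reading of the ``length $\geq t$'' clause as encoding speed $\leq 1$ along singular curves is exactly what Proposition~\ref{p:diffrays}(ii) establishes.
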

Note that this corollary does not say anything about times $|t|\geq T_{s}$.

\subsection{Comments, related literature and open questions}

\paragraph{Null-rays.} The null-rays which appear in the statement of Theorem \ref{t:singprop} are generalizations of the usual null-bicharacteristics, which are the integral curves of the Hamiltonian vector field $H_p$ of the principal symbol $p=\tau^2-a$ of $P$ contained in the characteristic set $p^{-1}(0)$. Null-rays are introduced in Definition \ref{d:defray}, they are paths tangent to a family of convex cones $\Gamma_m$ defined in Section \ref{s:gammam}.

\smallskip

For $m\in T^*(\R\times X)$ which is not in the double characteristic set $\{p=0\}\cap\{dp=0\}$, $\Gamma_m$ is simply the positive (or negative) cone generated by $H_p$ taken at point $m$, i.e., $\Gamma_m=\R^+\cdot H_p(m)$ (or $\Gamma_m=\R^-\cdot H_p(m)$, depending on whether $\tau\geq 0$ or $\tau\leq 0$). In the double characteristic set $\Sigma_{(2)}=\{p=0\}\cap\{dp=0\}\subset M$, the definition of the cones $\Gamma_m$ is more involved, and several formulas will be provided in Section \ref{s:thecones}. We can already say (see Appendix \ref{s:what}) that $\Gamma_m$ can be recovered as the convexification of the limits of all cones $\Gamma_{m_j}$ for $m_j\rightarrow m$ and $m_j\notin \Sigma_{(2)}$. 

\paragraph{Abnormal extremals.} Roughly speaking, Corollary \ref{c:singpropag}, Theorem \ref{t:inclusion} and Corollary \ref{c:xy} strengthen the idea that properties of general sub-Laplacians may be influenced not only by the geometry of null-bicharacteristics but also by the presence of abnormal extremal lifts of singular curves in the corresponding sub-Riemannian geometry; in other words, the latter curves play a role at the ``quantum level'' of general sub-Laplacians. 

\smallskip

This role was already foreboded in a particular case in the work of Richard Montgomery \cite{Mon95} about zero loci of magnetic fields, and it is central in the Treves conjecture about hypoelliptic analyticity (see \cite{treves1999symplectic} for the conjecture and \cite{albano2018analytic} for recent results). To the author's knowledge, it is the first result which illustrates this fact for \emph{general} sub-Laplacians.

\paragraph{Related literature.} As a particular case of Theorem \ref{t:singprop}, if $A$ is elliptic, then we recover Hörmander's result \cite[Proposition 3.5.1]{Ens} already mentioned above (see also \cite[Theorem 8.3.1 and Theorem 23.2.9]{Hor07} and \cite[Theorem 1.2.23]{Lerner}). In case $A$ has only double characteristics on a symplectic submanifold it was obtained in \cite{Mel84} in codimension $2$, and by B. and R. Lascar \cite{Las82}, \cite{LasLas82}  in the general case, using constructions of parametrices instead of positive commutator estimates as used in \cite{Mel86} (see also Remark \ref{r:biblio}). We also mention the paper \cite{Sav19} where a result about propagation of singularities in the case of ``quasi-contact'' sub-Laplacians is proved. The subelliptic wave propagator has also recently been studied in \cite{martini} to prove spectral multiplier estimates, but the construction is restricted to the ``elliptic part'' of the symbol where $a\neq 0$.

\smallskip

In \cite{Mel86}, two other results are proved, namely the finite speed of propagation for $P$ and an estimate on the heat kernel, but it is not our purpose to discuss here these other results, whose proofs are written in details in \cite{Mel86}. 

\paragraph{Open questions.} Here are a few natural questions that our work could help to answer:
\begin{itemize}
\item Is it possible to find explicitly the form of the wave kernel $K_G$ (and not only its singularities as in Theorem \ref{t:inclusion}), even for simple sub-Laplacians? This would generalize the Hadamard parametrix to subelliptic wave equations. The only known cases are apparently the Heisenberg case (see \cite{Nac82}, \cite{Tay86}, \cite{GKH02}), the contact case (\cite{Las82}, \cite{LasLas82}, \cite{Mel84}) and the quasi-contact case \cite{Sav19}.
\item The answer to the above question could pave the way towards a better understanding of the {\it subelliptic heat kernel} in the presence of abnormal extremals, thanks to the ``transmutation formula'' sometimes attributed to Y. Kannai \cite{Kan77} (see also \cite{cheeger}). Indeed, as proved in \cite{Lea87}, the subelliptic heat kernel $p_t(x,y)$ enjoys the asymptotics $2t{\rm Log}(p_t(x,y))\rightarrow d(x,y)^2$ as $t\rightarrow 0$ where $d(x,y)$ is the sub-Riemannian distance (see also \cite{jerison}), but the refinement of this limit into a full asymptotic development of $p_t(x,y)$ is known only in the absence of abnormal extremals minimizing the distance (see \cite{Ben88}). Note that subelliptic heat kernels have been studied a lot, see for instance \cite{neel} for one of the last major achievements in this field.
\item Is it possible to prove an analogue of Egorov's theorem in the framework of sub-Riemannian geometry? The usual formulation of Egorov theorem is that the evolution of a quantum observable ${\rm Op}_{\hbar}(b)$ is analogous to the evolution of the corresponding classical observable: 
$$
U^{-t}_H {\rm Op}_{\hbar}(b)U^t_H={\rm Op}_{\hbar}(b\circ \varphi_H^t)+O_t(\hbar), \quad \text{where} \quad U^t_H=e^{-\frac{it\hat{H}_h}{h}}, \ \ \hat{H}_h=-\frac{\hbar^2}{2m}\Delta
$$
and $\varphi^t_H$ is the Hamiltonian flow associated to the principal symbol $H$ of $\Delta$ (see for example \cite[Theorem IV-10]{Rob87}). Analogues of Egorov's theorem in the subelliptic framework have been derived for instance in \cite{colin} and \cite{fermanian}, but only in cases where there are no abnormal extremals. Would abnormal extremals play a role in a general subelliptic version of Egorov's theorem, as in the present work?
\item Is it possible to design a physical experiment with electrons in a magnetic field which would illustrate the propagation along singular curves as in Corollary \ref{c:singpropag}?\footnote{Thanks to R. Montgomery for suggesting this question.} Indeed, certain singular curves appear naturally as zero loci of magnetic fields, and high-frequency quantum particles tend to concentrate on such curves, see \cite{Mon95}. R. Montgomery says in \cite{Mon95} that ``singular curves persist upon quantization''.
\end{itemize}

\subsection{Organization of the paper} 
The goal of this work is to provide a fully detailed proof of Theorem \ref{t:singprop}, Corollary \ref{c:singpropag}, Theorem \ref{t:inclusion} and Corollary \ref{c:xy} and to explain how these results are related to sub-Riemannian geometry.

\smallskip

 In Section \ref{s:thecones}, we define the convex cones $\Gamma_m$ generalizing bicharacteristics and give explicit formulas for them, then prove their semi-continuity with respect to $m$, and finally introduce null-rays and ``time functions''. These functions are by definition non-increasing along the cones $\Gamma_m$. In this section, there is no operator, we work at a purely ``classical'' level.
 
 \smallskip 

The proof of Theorem \ref{t:singprop} is based on a positive commutator argument: the idea, which dates back at least to \cite{Ens} (see also \cite[Chapter I.2]{Ivr19}), is to derive an \emph{energy inequality} from the computation of a quantity of the form $\text{Im}(Pu,Lu)$, where $L$ is some well-chosen (pseudodifferential) operator. In Section \ref{s:poscommutator}, we compute this quantity for $L=\Op(\Phi)D_t$ where $\Phi$ is a time function, we write it under the form $\frac12(Cu,u)$ for an explicit second-order operator $C$ which, up to remainder terms, has non-positive symbol.

\smallskip

In Section \ref{s:proof}, we derive from this computation the sought energy inequality, which in turn implies Theorem \ref{t:singprop}. This proof requires to construct specific time functions and to use the powerful Fefferman-Phong inequality \cite{FP78}.

\smallskip

In Section \ref{s:xyproof}, we prove Corollary \ref{c:singpropag}. This requires to explain basic concepts of sub-Riemannian geometry, notably we define normal geodesics, singular curves, and abnormal extremal lifts.

\smallskip  

In Section \ref{s:th1}, we prove Theorem \ref{t:inclusion}: the main idea is to see $K_G$ itself as the solution of a subelliptic wave equation. We also prove Corollary \ref{c:xy} in the same section.

\smallskip

The reader will find in Appendix \ref{a:techn} the sign conventions for symplectic geometry that we use throughout this note, and a short reminder on pseudodifferential operators in Appendix \ref{a:pseudo}. Finally, in Appendix \ref{s:what}, we explain how the cones $\Gamma_m$ can be defined in a unified way as Clarke generalized gradients, thus making a bridge between our computations and Clarke's version of Pontryagin's maximum principle.

\paragraph{Acknowledgments.} I am very grateful to Yves Colin de Verdière, for his help at all stages of this work. Several ideas, notably in Sections \ref{s:th1}, are due to him. I also thank him for having first showed me R. Melrose's paper and for his constant support along this project, together with Emmanuel Trélat. I am also thankful to Andrei Agrachev, Richard Lascar and Nicolas Lerner for very interesting discussions related to this paper. Finally, many thanks are due to the referee whose suggestions improved the readability of the paper.

\section{The cones $\Gamma_m$} \label{s:thecones}
At double characteristic points where in particular $dp=0$, the Hamiltonian vector field $H_p$ vanishes, and the usual propagation of singularities result \cite[Theorem 6.1.1]{DH} recalled in Section \ref{s:motivations} does not provide any information. In \cite{Mel86}, R. Melrose defines convex cones $\Gamma_m$ which replace the usual propagation cone $\R^+\cdot H_p$ at these points, and which indicate the directions in which singularities of the subelliptic wave equation \eqref{e:subwave} may propagate.  The cones $\Gamma_m$ can be equivalently defined
\begin{itemize}
\item symplectically (Section \ref{s:gammam}) - this is the most synthetic definition;
\item with analytic formulas  (Section \ref{s:formula}) - very useful for proofs;
\item with the Clarke generalized gradient (Appendix \ref{s:what}) - which gives a clear geometric insight, although it is not useful in our proofs.
\end{itemize}
The cones $\Gamma_m$ have particularly simple expressions in case $A$ is a sub-Laplacian as in Definition \ref{e:sL}. These expressions are given along the proof of Corollary \ref{c:singpropag} in Section \ref{s:proofcorollary}, and also linked with the ``Clarke version'' of the Pontryagin maximum principle in Appendix \ref{s:what}.

\subsection{First definition of the cones $\Gamma_m$} \label{s:gammam}
In this section, we introduce several notations, and we define the cones $\Gamma_m$.

\smallskip

We consider $a\in C^\infty(T^*X)$ satisfying
\begin{equation} \label{e:homo}
a(x,\xi)\geq 0, \quad a(x,r\xi)=r^2 a(x,\xi), \quad r>0
\end{equation} 
in canonical coordinates $(x,\xi)$. Also we consider 
$$p=\tau^2-a\in C^\infty(M), \qquad \text{where} \quad M=T^*(\R\times X)\setminus 0.$$ In the end, $a$ and $p$ will be the principal symbols of the operators $A$ and $P$ introduced in Section \ref{s:intro}, but for the moment we work at a purely classical level and forget about operators. In a nutshell, at points where $p=0$ and $dp=0$, the cones $\Gamma_m$ are defined thanks to the Hessian of $p$.

\smallskip

We set
\begin{equation*}
M_+=\{m\in M, \ p(m)\geq 0, \tau\geq 0\}, \qquad M_-=\{m\in M, \ p(m)\geq 0, \tau\leq 0\};
\end{equation*}
in particular, $M_+\cup M_-=\{p\geq0\}$. Let
\begin{equation*}
\Sigma=\{m\in M; \ p(m)=0, \ \tau\geq 0\}.
\end{equation*}

\paragraph{The cones $\Gamma_m$ for $m\in M_+$.}
For $m\in M_+$, we consider the set
\begin{equation*}
\mathscr{H}_m=\R^+\cdot H_p(m)\subset T_mM,
\end{equation*}
where $H_p$ is the Hamiltonian vector field of $p$ verifying $\omega(H_p,Z)=-dp(Z)$ for any smooth vector field $Z$. In this formula and in all the sequel, $\omega$ is the canonical symplectic form on the cotangent bundle $M$. 

\smallskip

We note that
$$
p(m)\geq 0, \ dp(m)=0 \ \Rightarrow \ \mathscr{H}_m=\{0\}
$$ 
where $dp(m)$ stands for the differential of $p$ taken at point $m$.
We therefore extend the notion of ``bicharacteristic direction'' at such $m$. This will be done first for $m\in M_+$, then also for $m\in M_-$, but never for $m\in \{p<0\}$: the cones $\Gamma_m$ are not defined for points $m\in\{p<0\}$.

\smallskip

Let 
\begin{equation*}
\Sigma_{(2)}=\{m\in M, \tau=a=0\} \subset \Sigma.
\end{equation*}
Note that 
$$\Sigma_{(2)}=M_+\cap M_-=\{m\in M; \ \tau=a=p=0,\ da=dp=0\}$$
due to the the positivity \eqref{e:homo}. Thus, for $m\in\Sigma_{(2)}$, the Hessian of $a$ is well-defined: it is a quadratic form on $T_mM$. We denote by $a_m=\frac12 \Hess a$ \emph{the half of this Hessian}, and by $$p_m=(d\tau)^2-a_m$$ the half of the Hessian of $p$. For $m\in\Sigma_{(2)}$, we set
\begin{equation} \label{e:deflambdam}
\Lambda_m=\{w\in T_mM; \ d\tau(w)\geq 0, \ p_m(w)\geq0\}
\end{equation}
and, still for $m\in\Sigma_{(2)}$,
\begin{equation} \label{e:defgammam}
\Gamma_m:=\{v\in T_mM; \ \omega(v,w)\leq 0 \ \ \forall w\in \Lambda_m\}.
\end{equation}
If $m\in M_+\setminus \Sigma_{(2)}$, we set 
\begin{equation} \label{e:GammaHm}
\Gamma_m=\mathscr{H}_m.
\end{equation}
In particular, the cones $\Gamma_m$ are defined also at points $m$ outside $\Sigma$, i.e. for which $p(m)\neq 0$. Note also that the relation \eqref{e:GammaHm} says that the cones $\Gamma_m$ are \emph{positive} cones.

\paragraph{The cones $\Gamma_m$ for $m\in M_-$.} In order to extend the definition of the cones $\Gamma_m$ to $M_-$, we want this extension to be consistent with the previous definition at points in $M_+\cap M_-=\Sigma_{(2)}$. We observe that $M_-$ is the image of $M_+$ under the involution sending $\tau$ to $-\tau$. For $(t,\tau,\alpha)\in M_-$, we set
\begin{equation*}
\Gamma_m=\Gamma_{m'} \qquad \text{where  }\quad m'=(t,-\tau,\alpha)\in M_+.
\end{equation*}
It is clear that at points of $M_+\cap M_-=\Sigma_{(2)}$, the two definitions of $\Gamma_m$ coincide. With this definition in $M_-$, note that for $m\in M_-\setminus \Sigma_{(2)}$, there is a sign change:
\begin{equation} \label{e:-}    
\Gamma_m=-\mathscr{H}_m.
\end{equation}

\smallskip

In summary, the formulas \eqref{e:defgammam}, \eqref{e:GammaHm} and \eqref{e:-} define $\Gamma_m$ at any point $m\in M_+\cup M_-$, with different definitions for $m\in\Sigma_{(2)}$, $m\in M_+\setminus \Sigma_{(2)}$ and $m\in M_-\setminus \Sigma_{(2)}$.
The cones $\Gamma_m$ are not defined for $m\notin M_+\cup M_-$. For any $m\in M_+\cup M_-$, the cone $\Gamma_m$ is closed and convex.

\subsection{Formulas for the cones $\Gamma_m$} \label{s:formula}

In this section, we derive a formula for the cones $\Gamma_m$ when $m\in\Sigma_{(2)}$ which is more explicit than \eqref{e:defgammam}. It relies on the computation of the polar of a cone defined by a non-negative quadratic form:
\begin{proposition} \label{p:polar}
Let $Q$ be a non-negative quadratic form on a real vector space $V$, and let $$\Theta=({\rm ker}(Q))^\perp\subset V^*$$ where $\perp$ is understood in the duality sense and $V^*$ is the topological dual of $V$. Let $$\Lambda=\left\{\xi=(\xi_0,\eta)\in\R\times V; \ \xi_0\geq Q(\eta)^{\frac12}\right\}$$ and $$\Lambda^0=\left\{\xi'\in(\R\times V)^*; \ \forall \xi\in\Lambda, \ \xi'(\xi)\leq0\right\}.$$
Then 
\begin{equation} \label{e:lambda0degenerate}
\Lambda^0=\left\{\xi'=(\xi_0',\eta')\in(\R\times V)^*;\ \eta'\in \Theta \text{  and  } -\xi_0'\geq (Q^*(\eta'))^{\frac12}\right\}
\end{equation}
where $\R^*$ is identified with $\R$ and
\begin{equation} \label{e:S*}
Q^*(\eta')=\sup_{\eta\notin {\rm ker}(Q)} \frac{\eta'(\eta)^2}{Q(\eta)}.
\end{equation}
\end{proposition}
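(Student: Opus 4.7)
The plan is to extract two independent conditions from the defining inequality $\xi'(\xi)\le 0$, $\forall\xi\in\Lambda$: the membership $\eta'\in\Theta$ (which handles the degenerate directions of $Q$) and a quantitative bound $-\xi_0'\ge Q^*(\eta')^{1/2}$ (which is the Cauchy--Schwarz content in the non-degenerate direction). I would treat the degenerate and non-degenerate pieces separately, then glue them via the canonical decomposition $V=\ker(Q)\oplus W$ (for any complement $W$).

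First I would exploit the rays that sit inside $\Lambda$ for free. For any $\eta_0\in\ker(Q)$ we have $Q(\eta_0)^{1/2}=0$, so both $(0,\eta_0)$ and $(0,-\eta_0)$ lie in $\Lambda$; the inequality $\xi'(\xi)\le 0$ on these two vectors forces $\eta'(\eta_0)=0$, hence $\eta'\in\Theta$. Similarly, taking $\eta=0$ and $\xi_0\to+\infty$ in $\Lambda$ forces $\xi_0'\le 0$. These two reductions are clearly necessary for $\xi'\in\Lambda^0$, so there is no loss in assuming them from now on.

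Next I would reduce to the non-degenerate case. Assuming $\eta'\in\Theta$, the functional $\eta'$ vanishes on $\ker(Q)$, so it descends to a linear form $\bar\eta'$ on the quotient $\bar V=V/\ker(Q)$, on which $Q$ descends to a positive definite form $\bar Q$; moreover the definition \eqref{e:S*} gives exactly $Q^*(\eta')=\bar Q^*(\bar\eta')$, the latter being the standard dual norm squared. The defining inequality on $\Lambda$ then reads, for every $\bar\eta\in\bar V$,
\begin{equation*}
\xi_0'\,\bar Q(\bar\eta)^{1/2}+\bar\eta'(\bar\eta)\le 0,
\end{equation*}
obtained by choosing the optimal $\xi_0=\bar Q(\bar\eta)^{1/2}$ (any larger $\xi_0$ only helps, since $\xi_0'\le 0$). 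Since $\bar Q$ is positive definite it induces an inner product on $\bar V$, and Cauchy--Schwarz in this inner product gives $\sup_{\bar\eta\ne 0}\bar\eta'(\bar\eta)/\bar Q(\bar\eta)^{1/2}=\bar Q^*(\bar\eta')^{1/2}$, with equality attained on a suitable $\bar\eta$. The inequality above for all $\bar\eta$ is therefore equivalent to $-\xi_0'\ge\bar Q^*(\bar\eta')^{1/2}=Q^*(\eta')^{1/2}$, which is \eqref{e:lambda0degenerate}.

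The only delicate point, and the main obstacle I anticipate, is making sure that the two conditions ($\eta'\in\Theta$ and $-\xi_0'\ge Q^*(\eta')^{1/2}$) really are jointly sufficient and not just separately necessary; this is handled by the display above, since any $(\xi_0,\eta)\in\Lambda$ can be split as $\eta=\eta_{\mathrm{deg}}+\eta_W$ with $\eta_{\mathrm{deg}}\in\ker(Q)$ and $\eta_W$ a representative of the class $\bar\eta$, and then $\eta'(\eta)=\bar\eta'(\bar\eta)$ while $Q(\eta)^{1/2}=\bar Q(\bar\eta)^{1/2}$, so the computation on $\bar V$ transfers back verbatim to $V$. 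The formula for $Q^*$ in \eqref{e:S*} is tailored precisely so that this identification between $Q^*(\eta')$ and $\bar Q^*(\bar\eta')$ is automatic.
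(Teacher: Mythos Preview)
Your proof is correct and follows essentially the same approach as the paper: both directions are obtained by testing on vectors $(0,\eta_0)$ with $\eta_0\in\ker(Q)$ to force $\eta'\in\Theta$, and then by the Cauchy--Schwarz-type inequality $\eta'(\eta)\le Q^*(\eta')^{1/2}Q(\eta)^{1/2}$ (equivalently, the definition of $Q^*$) to handle the quantitative bound. The only cosmetic difference is that you pass to the quotient $V/\ker(Q)$ where $\bar Q$ is positive definite, whereas the paper works directly in $V$ and reaches the supremum in $Q^*$ via an approximating sequence $\eta_n$; these are equivalent formulations of the same argument.
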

\begin{proof}
Let $\xi'=(\xi_0',\eta')\in (\R\times V)^*$ such that $\eta'\in\Theta$ and $-\xi_0'\geq(Q^*(\eta'))^{\frac12}$, we seek to prove that $\xi'\in\Lambda^0$. Let $\xi=(\xi_0,\eta)\in\Lambda$. In particular, $\xi_0\geq (Q(\eta))^{\frac12}$. We have 
\begin{equation*}
\xi'(\xi)=\xi_0'(\xi_0)+\eta'(\eta)\leq -(Q^*(\eta'))^{\frac12}(Q(\eta))^{\frac12}+\eta'(\eta) \leq 0
\end{equation*}
hence $\xi'\in \Lambda^0$, which proves one inclusion. 

\smallskip

Conversely, to prove that $\Lambda^0$ is included in the expression \eqref{e:lambda0degenerate}, we first note that if $\eta'\notin \Theta$, then $(\xi_0',\eta')\notin \Lambda^0$ for any $\xi_0'\in \R^*$. Indeed, if $\eta'\notin \Theta$, there exists $\eta\in V$ such that $Q(\eta)=0$ and $\eta'(\eta)>0$. Thus, considering $\xi=(0,\eta)$, which is in $\Lambda$ by assumption, we get $\xi'(\xi)=\eta'(\eta)>0$ for any $\xi_0'\in \R^*$ and $\xi'=(\xi_0',\eta')$, proving that $\xi'\notin \Lambda^0$. Now, if $\xi'=(\xi_0',\eta')\in \Lambda^0$ with $\eta'\in \Theta$, we take $\xi_n=(\xi_{0n},\eta_n)$ with $\eta_n\notin {\rm ker}(Q)$ so that $\eta'(\eta_n)^2/Q(\eta_n)\rightarrow Q^*(\eta')$, and $\eta'(\eta_n)\geq 0$ and $\xi_{0n}=Q(\eta_n)^{\frac12}$. Then $\xi_n\in\Lambda$. Therefore, $\xi'(\xi_n)\leq 0$, which implies that $-\xi_0'\geq (Q^{*}(\eta'))^{\frac12}$. This proves the result.
\end{proof}
Applying the previous proposition to $Q=a_m$ yields a different definition of the cones $\Gamma_m$. First, $\Lambda_m$, which has been defined in \eqref{e:deflambdam}, can be written as
\begin{equation*}
\Lambda_m=\left\{w\in T_mM; \ d\tau(w)\geq (a_m(w))^{\frac12}\right\}.
\end{equation*}
Since the definition of $\Lambda_m$ does not involve $dt$, we have $v(\partial_t)=0$ for any $v\in \Lambda_m^0$. Now, using the notation $a_m^*$ to denote \eqref{e:S*} when $Q=a_m$, Proposition \ref{p:polar} yields that 
\begin{align*}
&\qquad \quad \Lambda_m^0=\R^+(-d\tau+B_0), \\
&B_0=\left\{b_0\in  ({\rm ker}(a_m))^\perp, \ a_m^*(b_0)\leq 1\right\}.
\end{align*}
The duality $\perp$ is computed with respect to the space ${\rm ker}(a_m)\subset T(T^*X)$, i.e., $b_0\in T^*(T^*X)$. 

\smallskip

Here, we consider $a_m$ as a quadratic form on $T(T^*X)$ instead of $M=T(T^*(\R\times X))$. This is also related to the fact that the Hessian $a_m$ depends only on the projection $\pi_2(m)$, where $\pi_2:M\rightarrow T^*X$ is the canonical projection on the second factor, and not on really on the other components of $m$. \emph{These slight abuse of notations cause no problem, and will thus be repeated several times in the sequel.}

\smallskip

Comparing the definition of $\Lambda_m^0$ as the polar cone of $\Lambda_m$ and the definition \eqref{e:defgammam} of $\Gamma_m$, we see that $\Gamma_m$ is exactly the image of $\Lambda_m^0$ through the canonical isomorphism $\omega(v,\cdot)\mapsto v$ between $T_m^*M$ and $T_mM$. Thus, 
\begin{equation} \label{e:gammam2} 
\begin{split}
& \qquad \qquad \Gamma_m=\R^+(\partial_t+B ), \\
& B=\left\{b \in {\rm ker}(a_m)^{\perp_{\omega_{X}}}, \ a_m^*(\mathcal{I}(b))\leq 1\right\}.
\end{split}
\end{equation}
Here, $\perp_{\omega_{X}}$ designates the symplectic orthogonal with respect to the canonical symplectic form $\omega_{X}$ on $T^*X$ and 
\begin{equation} \label{e:Iisom}
\begin{split}
\mathcal{I}:T(T^*X)&\rightarrow T^*(T^*X),\\ b &\mapsto \omega_{X}(b ,\cdot)
\end{split}
\end{equation}
 is the canonical isomorphism between $T(T^*X)$ and $T^*(T^*X)$.  Formula \eqref{e:gammam2} plays a key role in the sequel. An equivalent formula in terms of the so-called ``fundamental matrix'' associated to $a_m$ is derived in Appendix \ref{s:what}.

\medskip

\textbf{Important remark.} In case $A=\sum_{i=1}^K Y_i^*Y_i$ is a sub-Laplacian as in Definition \ref{e:sL}, the expression $a_m^*(\mathcal{I}(b ))$ which appears in \eqref{e:gammam2} is equal to $g(d\pi(b))$ where $\pi:T^*X\rightarrow X$ is the canonical projection, and $g$ is the sub-Riemannian metric associated to the vector fields $Y_i$ (see Lemma \ref{l:eqamg}).

\subsection{Inner semi-continuity of the cones $\Gamma_m$} \label{s:innerwithformula}
Using the formula \eqref{e:gammam2}, we can prove a continuity property for the cones $\Gamma_m$.
\begin{lemma} \label{l:inner} Let $a\in C^\infty(T^*X)$ satisfying \eqref{e:homo}.
The assignment $m\mapsto \Gamma_m$ is inner semi-continuous on $M_+\cup M_-=\{p\geq 0\}$. In other words, if both conditions
\begin{enumerate}[(i)]
\item $m_j\in M_+\cup M_-$ for any $j\in\N$, and $m_j\rightarrow  m$ as $j\rightarrow +\infty$;
\item $v_j\in \Gamma_{m_j}$ for any $j\in\N$, and $v_j\rightarrow v\in T_mM$ as $j\rightarrow +\infty$,
\end{enumerate}
hold, then $v\in \Gamma_m$.
\end{lemma}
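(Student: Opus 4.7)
The plan is to establish the stated outer semi-continuity of $m\mapsto\Gamma_m$ by case analysis on whether $m$ (resp.\ $m_j$) lies in $\Sigma_{(2)}$ or not, relying on the characterization \eqref{e:defgammam}, namely $\Gamma_m=\{v\in T_mM:\omega(v,w)\leq 0\ \forall w\in\Lambda_m\}$, when $m\in\Sigma_{(2)}$. Thanks to the $\tau\mapsto-\tau$ symmetry and extraction of subsequences, I may assume all $m_j$ lie in $M_+$, and further that either $m_j\in\Sigma_{(2)}$ for all $j$ or $m_j\in M_+\setminus\Sigma_{(2)}$ for all $j$.

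Case $m\notin\Sigma_{(2)}$: the constraints $p(m)\geq 0$, $\tau(m)\geq 0$ and $m\notin\Sigma_{(2)}$ force $\tau(m)>0$, so $H_p(m)=2\tau(m)\partial_t-H_a(m)\neq 0$. For $j$ large, $m_j\in M_+\setminus\Sigma_{(2)}$, $\Gamma_{m_j}=\R^+\cdot H_p(m_j)$, and writing $v_j=t_jH_p(m_j)$ with $t_j\geq 0$, continuity of $H_p$ forces $t_j\to t:=|v|/|H_p(m)|$, whence $v=tH_p(m)\in\Gamma_m$. Case $m,m_j\in\Sigma_{(2)}$: formula \eqref{e:gammam2} lets me decompose $v_j=s_j(\partial_t+b_j)$ with $a_{m_j}^*(\mathcal{I}(b_j))\leq 1$; $v_j$ bounded gives $s_j$ bounded, and after extraction $s_j\to s$. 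If $s=0$ then $v=0\in\Gamma_m$; else $b_j\to b$. The Hessian $a_m$ depends continuously on $m\in\Sigma_{(2)}$, and since $a_m^*(\eta)=\sup_v[2\eta(v)-a_m(v)]$ is the supremum of continuous functions of $(m,\eta)$, it is jointly lower semi-continuous, hence $a_m^*(\mathcal{I}(b))\leq\liminf a_{m_j}^*(\mathcal{I}(b_j))\leq 1$, giving $v=s(\partial_t+b)\in\Gamma_m$.

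The substantive case is $m\in\Sigma_{(2)}$ with $m_j\in M_+\setminus\Sigma_{(2)}$. Working in a local chart with $\epsilon_j=m_j-m$, and using $\omega(H_p,\cdot)=-dp$ together with the Taylor expansion $dp(m_j)(w)=2p_m(\epsilon_j,w)+O(|\epsilon_j|^2)$ (valid as $dp(m)=0$), I obtain $\omega(v_j,w)=-2p_m(t_j\epsilon_j,w)+O(t_j|\epsilon_j|^2)$ when $v_j=t_jH_p(m_j)$. Since $H_p(m)=0$ gives $|H_p(m_j)|=O(|\epsilon_j|)$, after a suitable extraction $t_j\epsilon_j\to\xi$ for some $\xi\in T_mM$, the error term vanishes in the limit, and $\omega(v,w)=-2p_m(\xi,w)$. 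Rescaling the constraints $p(m_j)\geq 0$ and $\tau(m_j)\geq 0$ by $t_j^2$ and $t_j$ respectively and passing to the limit gives $p_m(\xi)\geq 0$ and $d\tau(\xi)\geq 0$, i.e.\ $\xi\in\Lambda_m$. It remains to verify $p_m(\xi,w)\geq 0$ for $\xi,w\in\Lambda_m$: from $\sqrt{a_m(\xi)}\leq d\tau(\xi)$ and $\sqrt{a_m(w)}\leq d\tau(w)$ (consequences of $p_m\geq 0$ on $\Lambda_m$), the Cauchy--Schwarz inequality for the non-negative form $a_m$ yields $a_m(\xi,w)\leq\sqrt{a_m(\xi)a_m(w)}\leq d\tau(\xi)d\tau(w)$, so $p_m(\xi,w)=d\tau(\xi)d\tau(w)-a_m(\xi,w)\geq 0$; hence $\omega(v,w)\leq 0$ for all $w\in\Lambda_m$, i.e.\ $v\in\Gamma_m$.

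The main obstacle lies in this last case: one must bridge the ``regular'' description $\Gamma_{m_j}=\R^+H_p(m_j)$ at points where $H_p\neq 0$ with the ``singular'' symplectic-polar description of $\Gamma_m$ defined through the Hessian $p_m$. The Cauchy--Schwarz computation for the non-negative form $a_m$ restricted to $\Lambda_m$ is the key ingredient reconciling these two descriptions in the limit $m_j\to m\in\Sigma_{(2)}$.
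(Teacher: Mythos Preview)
Your argument in the first two cases is fine, and the overall strategy in the substantive case (testing $\omega(v,w)\leq 0$ against $w\in\Lambda_m$ via the Hessian expansion) is natural. However, there is a genuine gap: the extraction ``$t_j\epsilon_j\to\xi$'' is not justified. From $|H_p(m_j)|=O(|\epsilon_j|)$ and $t_j|H_p(m_j)|$ bounded you only get an \emph{upper} bound $|H_p(m_j)|\leq C|\epsilon_j|$; to bound $t_j|\epsilon_j|$ you would need the reverse inequality $|H_p(m_j)|\geq c|\epsilon_j|$, which fails precisely because the bilinear form $p_m$ is degenerate. Concretely, take $a=\xi_1^2$ on $T^*\R^2$, $m\in\Sigma_{(2)}$, and $\epsilon_j$ with $\tau$-component $1/j$ and a component of size $1/\sqrt{j}$ in an $x$-direction (which lies in $\ker p_m$). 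Then $H_p(m_j)=(2/j)\partial_t$, so with $t_j=j/2$ one has $v_j\equiv\partial_t$, yet $t_j|\epsilon_j|\sim\sqrt{j}\to\infty$. In such situations your error term $O(t_j|\epsilon_j|^2)$ is $O(1)$ as well, so the limiting identity $\omega(v,w)=-2p_m(\xi,w)$ breaks down on two counts.

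The paper circumvents this by normalizing differently: it writes $\Gamma_{m_j}=\R^+\bigl(\partial_t-\tfrac12 H_a(m_j)/\tau_j\bigr)$ and verifies directly that $a_m^*$ of the second summand is $\leq 1+o(1)$, splitting into the cases $a_m(\epsilon_j)=o(|\epsilon_j|^2)$ (where the limit lies in $\R^+\partial_t$) and $a_m(\epsilon_j)\not=o(|\epsilon_j|^2)$ (where a first-order Taylor expansion of $da/\sqrt{a}$ combined with Cauchy--Schwarz suffices). Your approach can be repaired along similar lines---e.g.\ by replacing $t_j\epsilon_j$ with its class in $T_mM/\ker(p_m)$ and controlling the remainder separately when the kernel component of $\epsilon_j$ dominates---but as written the extraction step is the missing idea.
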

Before proving Lemma \ref{l:inner}, let us explain the intuition behind this semi-continuity. Recall that the cones $\Gamma_m$ generalize bicharacteristic directions at points where $\tau=a=p=0$ and $da=dp=0$. To define the cones $\Gamma_m$ at these points, following formulas \eqref{e:deflambdam} and \eqref{e:defgammam},  we have first considered directions where $p$ grows (since $p=0$ and $dp=0$, we consider the (half) Hessian $p_m$), yielding $\Lambda_m$, and then $\Gamma_m$ has been defined as the (symplectic) polar cone of $\Lambda_m$. This is exactly parallel to a procedure which yields bicharacteristic directions in the non-degenerate case: the directions along which $p$ grows, verifing $dp(v)\geq 0$, form a cone, and it is not difficult to check that its (symplectic) polar consists of a single direction given by the Hamiltonian vector field of $p$. This is a unified vision of the cones $\Gamma_m$, in the sense that they are obtained in a unified way, no matter whether $m\in\Sigma_{(2)}$ or not. The proof of Lemma \ref{l:inner} we give below is however purely analytic, and does not use this geometric intuition. Note that Appendix \ref{s:what} provides still another unified vision of the cones $\Gamma_m$, thanks to the notion of Clarke generalized gradients.

\medskip

\emph{Proof of Lemma \ref{l:inner}.} The assignments $$\Sigma_{(2)}\ni m\mapsto \Gamma_m \ \ \text{and} \ \ M_+\cup M_- \setminus \Sigma_{(2)}\ni m\mapsto \Gamma_m$$ are clearly continuous thanks to formula \eqref{e:defgammam} (resp. \eqref{e:GammaHm} and \eqref{e:-}). Therefore, we restrict to the case where $m\in\Sigma_{(2)}$ and $m_j\in M_+\cup M_- \setminus \Sigma_{(2)}$.

\smallskip

According to \eqref{e:GammaHm} and \eqref{e:-}, the cone $\Gamma_{m_j}$ at $m_j=(t_j,\tau_j,x_j,\xi_j)$ is given by:
\begin{equation} \label{e:simpleGammamj}
\Gamma_{m_j}={\rm sgn}(\tau_j)\R^+[2\tau_j\partial_t-H_a(m_j)]
\end{equation}
where $H_a(m_j)$ is the Hamiltonian vector field of $a$ at $m_j$. Dividing by $2\tau_j\neq 0$, we rewrite it as
\begin{equation} \label{e:Gammamj}
\Gamma_{m_j}=\R^+\left(\partial_t-\frac12\frac{H_a(m_j)}{\tau_j}\right)
\end{equation}
If $a(m_j)=0$, then $H_a(m_j)=0$ since $a$ is a quadratic form. Thus $\Gamma_{m_j}=\R^+\partial_t$, thus any limit of elements of $\Gamma_{m_j}$ is contained in $\Gamma_m$ according to \eqref{e:gammam2} (take $b=0$). We thus assume that $a(m_j)\neq 0$.

\smallskip

In the sequel, we work in a chart near $m$ and $\|\cdot\|_{M}$ denotes a norm in this chart. We recall that $a_m$ is half the Hessian of $a$ at $m$, thus a bilinear form on $T_mM$. When its two arguments are $m_j-m$ (which we can view as an element of $T_mM$ now that we are working in a chart), it is written $a_m(m_j-m)$. Also, in the sequel the notation $o$ accounts for the $j\rightarrow +\infty$ limit.

\smallskip

We distinguish two cases.

\smallskip

Firstly, if $a_m(m_j-m)=o(\|m_j-m\|_M^2)$, then using a Taylor expansion and $a(m)=da(m)=0$, we get $a(m_j)^{\frac12}=o(\|m_j-m\|_M)$. Since $a$ is a quadratic form, this implies that $d(a^{\frac12})(m_j)=o(1)$, where the notation in the left-hand side stands for the differential of $a^{\frac12}$ taken at point $m_j$. In turn, we obtain $$\frac12\frac{da(m_j)}{\tau_j}=\frac{a(m_j)^{\frac12}}{\tau_j}d(a^{\frac12})(m_j)=o(1)$$
since $|\tau_j|\geq a(m_j)^{\frac12}$ due to $p(m_j)\geq 0$. This implies that $H_a(m_j)/\tau_j\rightarrow 0$ and plugging into \eqref{e:Gammamj}, we conclude that the limiting directions of vectors $v_j$ of $\Gamma_{m_j}$ as $j\rightarrow +\infty$ belong to $\R^+\partial_t$ and thus to $\Gamma_m$.

\smallskip

Secondly, if $a_m(m_j-m)$ is not $o(\|m_j-m\|_M^2)$, then we use the following lemma.
\begin{lemma} \label{l:linalg} 
If $a_m(m_j-m)$ is not $o(\|m_j-m\|_M^2)$, then for any $v\in T_mM$, there holds $$\frac12\frac{da(m_j)(v)}{a(m_j)^{1/2}}=\frac{a_m(m_j-m,v)}{a_m(m_j-m)^{1/2}}+o(1).$$
The notation $a_m(m_j-m,v)$ means that we evaluate the bilinear form $a_m$ at $(m_j-m,v)$.
\end{lemma}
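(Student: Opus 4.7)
The plan is to reduce the statement to Taylor expansions of $a$ around $m$. Since $m \in \Sigma_{(2)}$ and $a \geq 0$, one has $a(m) = 0$ and $da(m) = 0$, so working in a local chart and setting $h_j := m_j - m$, the definition $a_m = \tfrac{1}{2}\Hess a(m)$ yields
\begin{equation*}
a(m_j) = a_m(h_j, h_j) + O(\|h_j\|_M^3), \qquad \tfrac{1}{2}\,da(m_j)(v) = a_m(h_j, v) + O(\|h_j\|_M^2)
\end{equation*}
for every $v \in T_m M$, where the second expansion is obtained by differentiating the first one and using $d(a_m)(h_j)(v) = 2\, a_m(h_j, v)$.

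The hypothesis that $a_m(h_j, h_j)$ is not $o(\|h_j\|_M^2)$ allows us to extract a subsequence on which $a_m(h_j, h_j) \geq c \|h_j\|_M^2$ for some $c > 0$; this is harmless since the conclusion is an $o(1)$ statement that will then be used to identify limits of $v_j$ in the proof of Lemma \ref{l:inner}. On this subsequence $a_m(h_j, h_j)^{1/2} \geq \sqrt{c}\,\|h_j\|_M$, hence
\begin{equation*}
a(m_j)^{1/2} = a_m(h_j, h_j)^{1/2}\bigl(1 + O(\|h_j\|_M)\bigr)^{1/2} = a_m(h_j, h_j)^{1/2}\bigl(1 + O(\|h_j\|_M)\bigr).
\end{equation*}
Combining with the expansion of $da(m_j)(v)$ and expanding the quotient gives
\begin{equation*}
\frac{1}{2}\frac{da(m_j)(v)}{a(m_j)^{1/2}} = \frac{a_m(h_j, v)}{a_m(h_j, h_j)^{1/2}}\bigl(1 + O(\|h_j\|_M)\bigr) + \frac{O(\|h_j\|_M^2)}{a_m(h_j, h_j)^{1/2}}.
\end{equation*}

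The second summand is $O(\|h_j\|_M) = o(1)$ directly from the lower bound $a_m(h_j, h_j)^{1/2} \geq \sqrt{c}\,\|h_j\|_M$. For the first summand, the Cauchy--Schwarz inequality applied to the non-negative symmetric bilinear form $a_m$ gives $|a_m(h_j, v)| \leq a_m(h_j, h_j)^{1/2}\, a_m(v, v)^{1/2}$, so the ratio $a_m(h_j, v)/a_m(h_j, h_j)^{1/2}$ is uniformly bounded (by $a_m(v, v)^{1/2}$) in $j$, and multiplying it by $O(\|h_j\|_M)$ produces another $o(1)$ contribution. This yields the claimed identity. There is no deep obstacle in the argument: the only subtle point is using the hypothesis to prevent the cubic Taylor remainder in $a(m_j)$ from spoiling the asymptotics after taking the square root, which is precisely what the lower bound $a_m(h_j, h_j) \geq c \|h_j\|_M^2$ handles.
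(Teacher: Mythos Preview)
Your proof is correct and follows the same approach as the paper: combine the Taylor expansions $a(m_j)=a_m(h_j)+o(\|h_j\|_M^2)$ and $da(m_j)(v)=2a_m(h_j,v)+o(\|h_j\|_M)$ and divide. The paper's own proof is terser and leaves implicit the two points you spell out (the subsequence extraction needed to turn ``not $o(\|h_j\|_M^2)$'' into a usable lower bound, and the Cauchy--Schwarz step bounding $a_m(h_j,v)/a_m(h_j)^{1/2}$), but the substance is identical.
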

\begin{proof}
In a chart, we combine the two expansions
\begin{align*}
da(m_j)(v)&=2a_m(m_j-m,v)+o(\|m_j-m\|_M) \\
a(m_j)&=a_m(m_j-m)+o(\|m_j-m\|_M^2)
\end{align*}
to get the result.
\end{proof}

In view of \eqref{e:Gammamj} and \eqref{e:gammam2}, the inner semi-continuity at $m$ is equivalent to proving that 
\begin{equation} \label{e:ineqam*}
a_m^*\left(\frac12\frac{a(m_j)^{\frac12}}{\tau_j}\frac{da(m_j)}{a(m_j)^{\frac12}}\right)\leq 1+o(1).
\end{equation}
Using $|\tau_j|\geq a(m_j)^{\frac12}$ and Lemma \ref{l:linalg}, for any $v\in T_mM\setminus {\rm ker}(a_m)$, there holds
\begin{equation*}
\frac{1}{a_m(v)}\left(\frac12\frac{a(m_j)^{\frac12}}{\tau_j}\frac{da(m_j)(v)}{a(m_j)^{\frac12}}\right)^2\leq \frac{a_m(m_j-m,v)^2}{a_m(v)a_m(m_j-m)}+o(1)\leq 1+o(1)
\end{equation*}
by Cauchy-Schwarz. Hence, by definition of $a_m^*$ (see \eqref{e:S*}), \eqref{e:ineqam*} holds, which concludes the proof of Lemma \ref{l:inner}.

\subsection{Null-rays and time functions} \label{s:time}

 We now define null-rays, which are the integral curves of the cone field $\Gamma_m$. They appear in the statement of Theorem \ref{t:singprop}, and they play an important role in the present work. 
 
 In this definition, we use the following notation: given a Lipschitz curve $\gamma:I\rightarrow M_+$ defined on some interval $I\subset \R$, the set-valued derivative $\gamma'(s)$ for $s\in I$ is the set of all tangent vectors $X\in T_{\gamma(s)}M$ such that there exists $s_n\rightarrow s$ with $s_n\neq s$ for any $n\in \N$, verifying that $\forall f\in C^\infty(M)$, $\frac{f(\gamma(s_n))-f(\gamma(s))}{s_n-s}\rightarrow Xf$.
\begin{definition} \label{d:defray}
A forward-pointing ray for $p$ is a Lipschitz curve $\gamma:I\rightarrow M_+$ defined on some interval $I\subset \R$ with (set-valued) derivative $\gamma'(s)\subset \Gamma_{\gamma(s)}$ for all $s\in I$. Such a ray is forward-null if $\gamma(s)\in\Sigma$ for any $s\in I$. We define backward-pointing rays similarly, with $\gamma$ valued in $M_-$, and backward-null rays, with $\gamma$ valued in $\{m\in M; \ p(m)=0, \ \tau\leq 0\}$.
\end{definition}
Under the terminology ``\emph{ray}'', we mean either a forward-pointing or a backward-pointing ray; under the terminology ``\emph{null-ray}'', we mean either a forward-null or a backward-null ray. 

\smallskip

In particular null-rays live in $\{p=0\}$.

\smallskip

Fixing a norm $|\cdot|$ on $TM$, the expression \eqref{e:gammam2} implies that near any point $m\in M_+$, there is a (locally) uniform constant $c>0$ such that 
\begin{equation} \label{e:tislarge}
v\in \Gamma_m \Rightarrow v=T\partial_t + v', \qquad |v'|\leq cT
\end{equation}
where $v'$ is tangent to $T^*X$. Thus, if $\gamma:I\rightarrow M_+$ is a forward-pointing ray (thus a Lipschitz curve) defined for $s\in I$, \eqref{e:tislarge} implies that $dt/ds\geq c'|d\gamma/ds|$, hence $$\frac{d\gamma}{dt}=\frac{\ \frac{d\gamma}{ds}\ }{\ \frac{dt}{ds}\ }$$ is well-defined (possibly set-valued), i.e., $\gamma$ can be parametrized by $t$.

\smallskip

We define the {\it length} of a ray $\gamma:s\in[s_0,s_1]\rightarrow M_+$ by $$\ell(\gamma):=|t(s_1)-t(s_0)|.$$

\begin{remark}\label{r:nullray}
Thanks to the above parametrization and with a slight abuse in the terminology, we say that there is a null-ray of length $|T|$ from $(y,\eta)$ to $(x,\xi)$ if there exists a null-ray (in the sense of Definition \ref{d:defray}) parametrized by $t$ which joins $(0,\tau,y,\eta)$ to $(T,\tau,x,\xi)$, where $\tau$ verifies $\tau^2=a(y,\eta)=a(x,\xi)$.
\end{remark}

Time functions, which we now introduce, are one of the key ingredients of the proof of Theorem \ref{t:singprop}.
\begin{definition} \label{d:timefunction}
A $C^\infty$ function $\phi$ near $\overline{m}\in\{p\geq 0\}\subset M$ is a time function near $\overline{m}$ if in some neighborhood $N$ of $\overline{m}$,
\begin{equation*}
\phi \text{ is non-increasing along $\Gamma_m$, $m\in N\cap \{p\geq 0\}$}.
\end{equation*}
In particular, $\phi$ is non-increasing along the Hamiltonian vector field $H_p$ in $M_+$ but non-decreasing along $H_p$ in $M_-$ (due to \eqref{e:-}).
\end{definition}

Note that outside $\{p\geq 0\}$, there is no constraint on the values of $\phi$. The proof of Theorem \ref{t:singprop} relies on a positive commutator technique (Section \ref{s:poscommutator}) applied with a particular time function (Section \ref{s:proof}).
%
%
%
%
%

\section{A positive commutator} \label{s:poscommutator}
The proof of Theorem \ref{t:singprop} is based on a ``positive commutator'' technique, also known as ``multiplier'' or ``energy'' method in the literature. The idea is to derive an inequality from the computation of a quantity of the form $\text{Im}(Pu,Lu)$ where $L$ is some well-chosen (pseudodifferential) operator. In the present work, the operator $L$ is related to the time functions introduced in Definition \ref{d:timefunction}.

\smallskip

In the sequel, we use polyhomogeneous symbols, denoted by $S^m_{\rm phg}$, and the \emph{Weyl quantization}, denoted by $\Op:S^m_{\rm phg}\rightarrow \Psi^m_{\rm phg}$ (see Appendix \ref{a:pseudo}). 
 For example, we consider the operator $D_t=\frac{1}{i}\partial_t=\Op(\tau)$ (of order $1$). The operator $A\in \Psi^2_{\rm phg}$ has principal symbol $a\in C^\infty(T^*X)$ satisfying \eqref{e:homo}, and $P=D_t^2-A$ has principal symbol $p=\tau^2-a$.
 
 \smallskip

Also, $\Phi(t,x,\xi)$ designates a smooth \textit{real-valued} function on $M$, homogeneous of degree $\alpha\in\R$ in $\xi$, compactly supported on the base $\R\times X$, and independent of $\tau$. In Section \ref{s:proof}, we will take $\Phi$ to be a time function. By the properties of the Weyl quantization, $\Op(\Phi)$ is a compactly supported selfadjoint (with respect to $\nu$)  pseudodifferential operator of order $\alpha$.

\smallskip

Our goal in Section \ref{s:OpC} will be to compute $C$ defined by\footnote{In \cite{Mel86}, $C$ is explicitly defined as $\text{Im} (\Op(\Phi) D_tu,Pu):=(Cu,u)$; however the formulas (6.1) and (6.2) in \cite{Mel86} are not coherent with this definition, but they are correct if we take the definition \eqref{e:mainequality} for $C$.}
\begin{equation} \label{e:mainequality}
\text{Im} (Pu, \Op(\Phi) D_tu):=\frac12(Cu,u),
\end{equation}
since this will allow us to derive the inequality \eqref{e:propsing} which is the main ingredient in the proof of Theorem \ref{t:singprop}.

\subsection{The operator $C$} \label{s:OpC}
Our goal in this section is to compute $C$ defined by \eqref{e:mainequality}. 
\begin{lemma}
We have
\begin{equation} \label{e:melrose6.1}
C=D_t\Op(\Phi_t') D_t-\frac{i}{2}([A,\Op(\Phi)]D_t+D_t[A,\Op(\Phi)])+\frac12(A\Op(\Phi_t')+\Op(\Phi_t')A).
\end{equation} 
where $\Phi_t'=\partial_t \Phi$.
\end{lemma}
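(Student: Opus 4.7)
My plan is to extract the imaginary part using $2i\,\text{Im}\,z=z-\bar z$, transfer operators onto $u$ via self-adjointness, and then simplify using the exact commutator identity $[D_t,\Op(\Phi)]=\tfrac{1}{i}\Op(\Phi_t')$, which follows from the Leibniz rule since $\Phi$ is independent of $\tau$.

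First I would write
$$
2i\,\text{Im}(Pu,\Op(\Phi)D_tu)=(Pu,\Op(\Phi)D_tu)-(\Op(\Phi)D_tu,Pu),
$$
and move each operator to the right-hand argument using $P^*=P$, $D_t^*=D_t$, $\Op(\Phi)^*=\Op(\Phi)$, and $(\Op(\Phi)D_t)^*=D_t\Op(\Phi)$. This yields
$$
(Cu,u)=2\,\text{Im}(Pu,\Op(\Phi)D_tu)=-i(u,Bu),\qquad B:=P\Op(\Phi)D_t-D_t\Op(\Phi)P.
$$
A direct adjoint computation shows $B^*=-B$, so $iB$ is self-adjoint, and sesquilinearity ($(u,\alpha v)=\bar\alpha(u,v)$) gives $-i(u,Bu)=(u,iBu)$. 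Polarization for self-adjoint operators then forces $C=iB=i(P\Op(\Phi)D_t-D_t\Op(\Phi)P)$.

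Next I would substitute $P=D_t^2-A$ and split $B$ into its $D_t^2$-part and $A$-part. The $D_t^2$-part factors neatly as
$$
D_t^2\Op(\Phi)D_t-D_t\Op(\Phi)D_t^2=D_t[D_t,\Op(\Phi)]D_t=\tfrac{1}{i}D_t\Op(\Phi_t')D_t,
$$
contributing $D_t\Op(\Phi_t')D_t$ to $C=iB$. For the $A$-part $D_t\Op(\Phi)A-A\Op(\Phi)D_t$, the key observation is that $[D_t,A]=0$ since $A$ acts only on $X$; this lets me rewrite it in two equivalent forms, either by pushing $D_t$ past $\Op(\Phi)$ on the left to get $\tfrac{1}{i}\Op(\Phi_t')A-[A,\Op(\Phi)]D_t$, or by the symmetric manipulation to obtain $\tfrac{1}{i}A\Op(\Phi_t')-D_t[A,\Op(\Phi)]$. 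Averaging the two yields the manifestly self-adjoint expression
$$
D_t\Op(\Phi)A-A\Op(\Phi)D_t=\tfrac{1}{2i}(A\Op(\Phi_t')+\Op(\Phi_t')A)-\tfrac12(D_t[A,\Op(\Phi)]+[A,\Op(\Phi)]D_t),
$$
which after multiplication by $i$ gives exactly the remaining two summands of \eqref{e:melrose6.1}.

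No genuine obstacle appears: the proof is purely operator-algebraic and uses only self-adjointness, the Leibniz rule encoded in $[D_t,\Op(\Phi)]=\tfrac{1}{i}\Op(\Phi_t')$, and $[D_t,A]=0$. The one step requiring care is the symmetrization of the $A$-part: either single-sided expansion is algebraically equivalent to the claimed answer but is not manifestly self-adjoint, and the half-sum is what makes the symmetries in $(A,\Op(\Phi_t'))$ and in $(D_t,[A,\Op(\Phi)])$ visible — symmetries that will matter later when the principal symbol of $C$ is read off in the positive commutator argument.
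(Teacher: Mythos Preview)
Your proof is correct and follows essentially the same route as the paper: both are purely operator-algebraic, resting on self-adjointness, the exact identity $[D_t,\Op(\Phi)]=\tfrac{1}{i}\Op(\Phi_t')$, and $[D_t,A]=0$. The only organizational difference is in the $A$-part: the paper writes $\Op(\Phi)D_t=S+iT$ with $S,T$ self-adjoint and computes $\text{Im}(Au,(S+iT)u)$ directly, whereas you reach the same symmetrized expression by averaging the two single-sided expansions of $D_t\Op(\Phi)A-A\Op(\Phi)D_t$; the two devices are equivalent.
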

Note that $C$ is of order $2+\alpha$, although we could have expected order $3+\alpha$ by looking too quickly at \eqref{e:mainequality}.
\begin{proof}
We have
\begin{equation} \label{e:Im}
\text{Im} (Pu, \Op(\Phi) D_t u)=I_1-I_2
\end{equation}
with 
\begin{equation*}
I_1=\text{Im} (D_t^2u, \Op(\Phi) D_t u) \qquad \text{and} \qquad I_2=\text{Im} (Au, \Op(\Phi) D_t u).
\end{equation*}

\smallskip

Noticing that
\begin{equation*} 
[D_t,\Op(\Phi)]=\Op(\frac{1}{i}\Phi_t')
\end{equation*}
(see \cite[Theorem 4.6]{Zwo}), we have for $I_1$:
\begin{align}
I_1&=\frac{1}{2i}\left((D_t^2u,\Op(\Phi) D_tu)-(\Op(\Phi) D_t u, D_t^2u)\right)\nonumber\\
&=\frac{1}{2i}\left((D_t\Op(\Phi) D_t^2u,u)-(D_t^2\Op(\Phi) D_t u, u)\right)\nonumber\\
&=-\frac{1}{2i}(D_t[D_t,\Op(\Phi)] D_t u, u)\nonumber \\
&=-\frac{1}{2i}(D_t \frac{1}{i}\Op(\Phi_t') D_t u, u)\nonumber\\
&=\frac{1}{2}(D_t\Op(\Phi_t') D_t u, u) \label{e:I1}
\end{align}
 Then, we write $\Op(\Phi)D_t=S+iT$ where
\begin{align}
S&=\frac{1}{2}(\Op(\Phi)D_t+D_t\Op(\Phi)) \nonumber\\ 
T&=\frac{1}{2i}(\Op(\Phi)D_t-D_t\Op(\Phi))=\frac12\Op(\Phi'_t). \label{e:T}
\end{align}
Using that $A$, $S$ and $T$ are selfadjoint, we compute $I_2$:
\begin{align}
I_2&=\text{Im}(Au,(S+iT)u)=\text{Im}((S-iT)Au,u)=\frac{1}{2i}([S,A]u,u)-\text{Re}((TAu,u))\nonumber\\
&=\frac{1}{2i}([S,A]u,u)-\frac12((TA+AT)u,u).\label{e:I2}
\end{align}
Furthermore,
\begin{equation} \label{e:SA}
[S,A]=\frac12([\Op(\Phi),A]D_t+D_t[\Op(\Phi),A]).
\end{equation}
All in all, combining \eqref{e:Im}, \eqref{e:I1}, \eqref{e:T}, \eqref{e:I2} and \eqref{e:SA}, we find that $C$ in \eqref{e:mainequality} is given by \eqref{e:melrose6.1}.
\end{proof}

\subsection{The principal and subprincipal symbols of $C$}
In this section, we compute the operator $C$ modulo a remainder term in $\Psi^\alpha_{\phg}$.  All symbols and pseudodifferential operators used in the computations are polyhomogeneous (see Appendix \ref{a:pseudo}); we denote by $\sigma_p(C),\sigma_{\text{sub}}(C)$ the principal symbol of $C$ and its sub-principal symbol.
We use the Weyl quantization, denoted by $\Op$, in the variables $y=(t,x)$, $\eta=(\tau,\xi)$, hence we have for any $b\in S_{\rm phg}^m$ and $c\in S_{\rm phg}^{m'}$:
\begin{equation} \label{e:composition}
\Op(b) \Op(c)-\Op(bc+\frac{1}{2i}\{b,c\})\in \Psi_{\rm phg}^{m+m'-2}
\end{equation}
and 
\begin{equation} \label{e:composition2}
[\Op(b),\Op(c)]-\Op(\frac{1}{i}\{b,c\})\in  \Psi_{\rm phg}^{m+m'-3}.
\end{equation}
Note that in \eqref{e:composition2}, the remainder is in $\Psi_{\rm phg}^{m+m'-3}$, and not only in $\Psi_{\rm phg}^{m+m'-2}$ (see \cite[Theorem 18.5.4]{Hor07}, \cite[Theorem 4.12]{Zwo}). Finally, we recall that $\Phi(t,x,\xi)$ is homogeneous in $\xi$ of degree $\alpha$. 

\begin{lemma}
There holds 
\begin{equation}\label{e:sigma2C}
\sigma_p(C)=\tau H_p \Phi-\Phi_t'p
\end{equation}
and
\begin{equation}\label{e:sbp}
\sigma_{\text{sub}}(C)=0.
\end{equation}
\end{lemma}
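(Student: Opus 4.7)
The plan is to compute the full Weyl symbol of $C$ modulo $\Psi^\alpha_{\rm phg}$, which captures both the principal symbol (of order $2+\alpha$) and the subprincipal symbol (of order $1+\alpha$). I would treat each of the three summands in \eqref{e:melrose6.1} separately, exploiting two clean features of Weyl quantization: by \eqref{e:composition}, the anticommutator satisfies $\Op(b)\Op(c)+\Op(c)\Op(b)=2\Op(bc)$ modulo $\Psi^{m+m'-2}_{\rm phg}$, the half-Poisson-bracket terms cancelling by antisymmetry; and by \eqref{e:composition2}, commutators gain an extra order, namely $[\Op(b),\Op(c)]=\frac{1}{i}\Op(\{b,c\})$ modulo $\Psi^{m+m'-3}_{\rm phg}$. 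I would also use the standing hypothesis that $A$ has vanishing subprincipal symbol, which lets me write $A=\Op(a)+R$ with $R\in\Psi^0_{\rm phg}$, together with the fact that $D_t=\Op(\tau)$ is an exact Weyl quantization.

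For the first summand $D_t\Op(\Phi_t')D_t$, set $B=\Op(\Phi_t')$ and use the algebraic identity
$$D_tBD_t=\tfrac12(D_t^2B+BD_t^2)-\tfrac12[D_t,[D_t,B]].$$
Since $D_t$ acts on the symbol of $B$ as $\frac{1}{i}\partial_t$, one has $[D_t,B]=\frac{1}{i}\Op(\Phi_{tt}'')$ exactly, so the double commutator already lies in $\Psi^\alpha_{\rm phg}$; the symmetric piece contributes $\Op(\tau^2\Phi_t')$ modulo $\Psi^\alpha_{\rm phg}$ by the anticommutator property. For the second summand $-\frac{i}{2}([A,\Op(\Phi)]D_t+D_t[A,\Op(\Phi)])$, the improved commutator formula together with the decomposition of $A$ yields $[A,\Op(\Phi)]=\frac{1}{i}\Op(H_a\Phi)$ modulo $\Psi^{\alpha-1}_{\rm phg}$; applying then the anticommutator property against $D_t$ gives the contribution $-\Op(\tau H_a\Phi)$ modulo $\Psi^\alpha_{\rm phg}$. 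For the third summand $\frac12(A\Op(\Phi_t')+\Op(\Phi_t')A)$, the anticommutator property and the decomposition of $A$ directly give $\Op(a\Phi_t')$ modulo $\Psi^\alpha_{\rm phg}$.

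Summing, the total Weyl symbol of $C$ modulo $\Psi^\alpha_{\rm phg}$ equals $\tau^2\Phi_t'-\tau H_a\Phi+a\Phi_t'$. Using $p=\tau^2-a$ and the fact that $\Phi$ is $\tau$-independent, so that $H_p\Phi=2\tau\Phi_t'-H_a\Phi$, a direct algebraic check produces
$$\tau H_p\Phi-\Phi_t'p=2\tau^2\Phi_t'-\tau H_a\Phi-\tau^2\Phi_t'+a\Phi_t'=\tau^2\Phi_t'-\tau H_a\Phi+a\Phi_t',$$
which proves \eqref{e:sigma2C}. Since all remainders are already in $\Psi^\alpha_{\rm phg}$, which is strictly below the subprincipal order $1+\alpha$, \eqref{e:sbp} follows at once.

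The main obstacle is bookkeeping: a naive application of \eqref{e:composition} would suggest half-Poisson-bracket contributions at subprincipal order, but the point is precisely that each piece of \eqref{e:melrose6.1} has been set up as either a symmetrization or a commutator, so that the Weyl calculus automatically eliminates them; the vanishing subprincipal symbol hypothesis on $A$ is exactly what is needed to ensure that the discrepancy between $A$ and $\Op(a)$ is of low enough order to disappear into the $\Psi^\alpha_{\rm phg}$ remainder.
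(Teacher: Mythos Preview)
Your proof is correct and follows essentially the same approach as the paper: compute each of the three summands of \eqref{e:melrose6.1} modulo $\Psi^\alpha_{\rm phg}$ using the Weyl calculus rules \eqref{e:composition}--\eqref{e:composition2} together with $A=\Op(a)\bmod\Psi^0_{\rm phg}$, then sum and rewrite via $p=\tau^2-a$. The only cosmetic difference is your treatment of $D_t\Op(\Phi_t')D_t$ via the identity $D_tBD_t=\tfrac12(D_t^2B+BD_t^2)-\tfrac12[D_t,[D_t,B]]$, whereas the paper applies \eqref{e:composition} twice and observes the half-Poisson-bracket terms cancel directly; both arguments encode the same symmetrization mechanism.
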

\begin{proof}
We compute each of the terms in \eqref{e:melrose6.1} modulo $ \Psi^\alpha_{\rm phg}$. We prove the following formulas:
\begin{align}
\frac12(A\Op(\Phi_t')+\Op(\Phi_t')A)&=\Op(a\Phi_t')\quad \text{mod } \Psi^\alpha_{\rm phg}\label{e:C3}\\
D_t\Op(\Phi_t')D_t&=\Op(\tau^2\Phi_t') \quad \text{mod } \Psi^\alpha_{\rm phg} \label{e:C1}\\
\frac{i}{2}([A,\Op(\Phi)]D_t+D_t[A,\Op(\Phi)])&=\Op(\tau\{a,\Phi\}) \quad \text{mod } \Psi^\alpha_{\rm phg} \label{e:C2}
\end{align}

Firstly, \eqref{e:C3} follows from the fact that $A=\Op(a)\text{ mod } \Psi^0_{\rm phg}$ (since the subprincipal symbol of $A$ vanishes) and from \eqref{e:composition} applied once with $b=a$, $c=\Phi_t'$, and another time with $b=\Phi_t'$ and $c=a$.

\smallskip

Secondly, $$\Op(\Phi_t')D_t=\Op(\Phi_t')\Op(\tau)=\Op(\Phi_t'\tau+\frac{1}{2i}\{\Phi_t',\tau\})+\Psi_{\rm phg}^{\alpha-1}$$ thanks to \eqref{e:composition}. Hence, using again \eqref{e:composition}, we get 
\begin{align*}
D_t\Op(\Phi_t')D_t&=\Op(\tau)\Op(\Phi_t'\tau+\frac{1}{2i}\{\Phi_t',\tau\}) \quad \text{mod } \Psi_{\rm phg}^{\alpha}\\
&=\Op(\tau^2\Phi_t'+\frac{\tau}{2i}\{\Phi_t',\tau\}+\frac{1}{2i}\{\tau,\Phi_t'\tau\}) \quad \text{mod } \Psi_{\rm phg}^\alpha
\end{align*}
which proves \eqref{e:C1}.

\smallskip

Thirdly, thanks to $A=\Op(a)\text{ mod } \Psi^0_{\rm phg}$ and \eqref{e:composition2}, we have
\begin{align*}
[A,\Op(\Phi)]&=\Op\left(\frac{1}{i}\{a,\Phi\}\right) \quad \text{mod } \Psi^{\alpha-1}_{\rm phg}
\end{align*}
(note that the remainder is in $\Psi^{\alpha-1}_{\rm phg}$, not in $\Psi^{\alpha}_{\rm phg}$). Using \eqref{e:composition}, we get
\begin{equation*}
[A,\Op(\Phi)]D_t+D_t[A,\Op(\Phi)]=\Op\left(\frac{2\tau}{i}\{a,\Phi\}\right)\quad \text{mod } \Psi^\alpha_{\rm phg}
\end{equation*}
which proves \eqref{e:C2}.

\smallskip

In particular, we get the principal symbol
\begin{align*}
\sigma_p(C)=\tau^2\Phi_t'-\tau H_a \Phi+\Phi_t'a.
\end{align*}
Using $p=\tau^2-a$, we can write it differently:
\begin{align}
\sigma_p(C)&=\tau^2\Phi_t'-\tau\{\tau^2-p,\Phi\}+\Phi_t'a\nonumber\\
&=\tau^2\Phi_t'-\tau\{\tau^2,\Phi\}+\tau H_p \Phi+\Phi_t'a\nonumber\\
&=\tau^2\Phi_t'-2\tau^2\Phi_t'+\tau H_p \Phi+\Phi_t'a\nonumber\\
&=\tau H_p \Phi-\Phi_t'p. \nonumber
\end{align}
Moreover, the formulas \eqref{e:C3}, \eqref{e:C1} and \eqref{e:C2} imply that the subprincipal symbol of $C$ vanishes, which concludes the proof.
\end{proof}

\section{Proof of Theorem \ref{t:singprop}} \label{s:proof}
 The goal of this section is to prove Theorem \ref{t:singprop}. For $V\subset T^*X$ and $I\subset\R$, we set
\begin{equation} \label{e:defSt}
\begin{split}
\mathscr{S}(I; V)=&\{(s,y,\eta)\in I\times T^*X, \ \text{there exist } (x,\xi)\in V,\ \tau\in \R  \text{ and a ray }\\
&\qquad \qquad  \text{from } (s,\tau,y,\eta) \text{ to } (0,\tau,x,\xi)\}.
\end{split}
\end{equation}
Most of the time, we will consider $I\subset (-\infty,0)$. Also, when $I$ or $V$ is reduced to a single element, for example $I=\{t\}$ or $V=\{(x,\xi)\}$, we will simplify the notations by dropping out the brackets in the notation: for example, instead of $\mathscr{S}(\{t\},V)$, we write $\mathscr{S}(t,V)$. Finally, take care that the above notation \eqref{e:defSt} refers to rays, and not null-rays (see Definition \ref{d:defray}).

\smallskip

 With the above notations, Theorem \ref{t:singprop} can be reformulated as follows: for any $t>0$ and any $(x_0,\xi_0)\in WF(u(0))$, there exists $(y_0,\eta_0)\in WF(u(-t))\cup WF(\partial_tu(-t))$ such that $(-t,y_0,\eta_0)\in \mathscr{S}(-t; (x_0,\xi_0))$ and one of the rays from $(y_0,\eta_0)$ to $(x_0,\xi_0)$ is null.
 
 \paragraph{First reduction of the problem.}
If $a(x_0,\xi_0)\neq 0$, then Theorem \ref{t:singprop} follows from the usual propagation of singularities theorem \cite[Theorem 6.1.1]{DH} and the fact that $\Gamma_m=\R^{\pm}\cdot H_p(m)$ for $m\notin\Sigma_{(2)}$. Therefore, in the sequel we assume that $a(x_0,\xi_0)=0$. 

\smallskip

Also, note that, to prove Theorem \ref{t:singprop}, it is sufficient to find $T>0$ independent of $(x,\xi)$ (and possibly small) such that the result holds for any $t\in (0,T)$.

\paragraph{Idea of the proof of Theorem \ref{t:singprop}.} 
To show Theorem \ref{t:singprop}, we will prove for $T>0$ sufficiently small an inequality of the form
\begin{equation} \label{e:propsingularites}
\|\Op(\Psi_0) u\|_{H^s}^2\leq c(\|\Op(\Psi_0) u\|_{L^2}^2+\|\Op(\Psi_1)u\|_{L^2}^2)+\text{ Remainder terms}
\end{equation}
where $\Psi_0$ and $\Psi_1$ are functions of $t,x,\xi$ such that
\begin{itemize}
\item the function $\Psi_0$ is supported near $t\in [-T,0]$ and the function $\Psi_1$ near $t=-T$;
\item on their respective supports in $t$, the operators $\Op(\Psi_0)$ and $\Op(\Psi_1)$  microlocalize respectively near $(x_0,\xi_0)$ and $\mathscr{S}(-T; (x_0,\xi_0))$.
\end{itemize}
 Then, assuming that $u$ is smooth on the support of $\Psi_1$, we deduce by applying \eqref{e:propsingularites} for different functions $\Psi_0$ with different degrees of homogeneity in $\xi$ that $u$ is smooth on the support of $\Psi_0$.

\smallskip

The inequality \eqref{e:propsingularites}, written more precisely as \eqref{e:propsing} below, will be proved by constructing a time function $\Phi(t,x,\xi)$ such that the time derivative $\Phi_t'$ is equal to $\Phi_t'=\Psi_1^2-\Psi_0^2$, and then by applying the Fefferman-Phong inequality to the operator $C$ given by \eqref{e:melrose6.1} (for this $\Phi$). 

\paragraph{Reduction to $\R^d$.} Let us explain how to reduce Theorem \ref{t:singprop} to a problem in $\R^d$. We first notice that it is sufficient to prove Theorem \ref{t:singprop} locally, i.e., only for the restriction of $u$ to some small open subset $U\subset X$. This follows from the two following facts:
\begin{itemize}
\item firstly, by the finite speed of propagation of subelliptic wave equations (proved in \cite[Section 3]{Mel86}), for any small $t>0$, the value of the solution of  \eqref{e:subwave} at time $0$ at $x\in X$ only depends on its values in a small neighborhood of $x$ at time $-t$. 
\item secondly, null-rays stay close from their departure points for short times. This follows from \eqref{e:GammaHm}, \eqref{e:-}, \eqref{e:gammam2}. 
\end{itemize}
Theorem \ref{t:singprop} is thus a \emph{local} statement for \emph{short} times.
As a consequence, a first reduction for its proof consists in fixing a small open subset $U\subset X$ and addressing the propagation of singularities problem in short time for the restriction of $u$ to $U$. 

\smallskip

 Then, we consider a coordinate chart $\psi:U\rightarrow \Omega\subset \R^d$. The differential operator $A$ is pushed forward by $\psi$ into a differential operator $\widetilde{A}$ on $\R^d$ which is also real, second-order, self-adjoint, non-negative and subelliptic. Moreover, we can lift $\psi$ to a symplectic mapping $\psi_{\text{lift}}:(x,\xi)\mapsto (\psi(x),((d_x\psi(x))^{-1})^T\xi)$. Through the differential of $\psi_{\text{lift}}$, the cones $\Gamma_m$ (computed with $a=\sigma_P(A)$, in $X$) are sent to the same cones, computed this time with $\widetilde{a}=\sigma_P(\widetilde{A})$ in $\R^d$. This follows from the ``symplectic'' definition of the cones in Section \ref{s:gammam} and the fact that $\sigma_P(\widetilde{A})$ is the pushforward of $\sigma_P(A)$. Hence, $\psi_{\text{lift}}$ maps also null-rays to null-rays. As a consequence, if we prove Theorem \ref{t:singprop} in $\Omega$, then pulling back the situation to $U\subset X$ proves Theorem \ref{t:singprop} in full generality. 
 
 \smallskip
 
\emph{In the sequel, we thus work in $\Omega\subset \R^d$.}

\subsection{Construction of the time function} \label{s:cstrtimefunction}

As explained in the introduction of this section, we construct a time function $\Phi(t,x,\xi)$ which verifies several properties. A time function is also constructed in the classical proof of Hörmander's propagation of singularities theorem \cite[Proposition 3.5.1]{Ens}, but in the present context of subelliptic wave equations, the construction is  more involved since the cones $\Gamma_m$ along which time functions should be non-increasing contain much more than a single direction (compare \eqref{e:GammaHm} with \eqref{e:gammam2}). The following lemma summarizes the properties that the time functions we need thereafter should satisfy. The figures below, notably Figure \ref{f:touchdown}, may help to understand the statement and its proof.

\begin{lemma} \label{l:cstrtime}
Let $(x_0,\xi_0)\in T^*\Omega\setminus 0$ and $V\subset V'$ be sufficiently small open conic (in $\xi$) neighborhoods of $(x_0,\xi_0)$ such that $\overline{V}\subset V'$. There exists $T>0$ such that for any $0\leq \delta_0\leq T/10$ and any $\alpha\geq 0$, there exists a smooth function $\Phi(t,x,\xi)$ with the following properties:
\begin{enumerate}[(1)]
\item it is compactly supported in $t,x$;
\item it is homogeneous of degree $\alpha$ in $\xi$;
\item it is independent of $\tau$;
\item there exists $\delta>0$ such that at any point of $M$ where $p\geq -2\delta a$, there holds $\tau H_p\Phi\leq 0$.
\item its derivative in $t$ can be written $\Phi_t'=\Psi_1^2-\Psi_0^2$ with $\Psi_0$ and $\Psi_1$ homogeneous of degree $\alpha/2$ in $\xi$;
\item $\Psi_0=0$ outside $\mathscr{S}((-T,\delta_0); V')$ and $\Psi_1=0$ outside $\mathscr{S}((-T-\delta_0,-T+\delta_0); V')$;
\item $\Psi_0>0$ on $\mathscr{S}((-T+\delta_0,0); V)$;
\item $\Phi$ is a time function outside $\mathscr{S}((-T-\delta_0,-T+\delta_0);V')$.
\end{enumerate}
\end{lemma}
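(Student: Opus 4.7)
My proof plan is to construct $\Phi$ in the product form $\Phi(t,x,\xi) = h(t)\,\psi(x,\xi)^2\,\rho(|\xi|)$, where $h$ is smooth and compactly supported in $[-T-\delta_0,\delta_0]$, $\psi$ is smooth and non-negative, homogeneous of degree $\alpha/2$ in $\xi$ for $|\xi|$ large, and $\rho$ is a smooth low-frequency cutoff chosen with $\rho^{1/2}$ smooth so that $\Psi_i$ below are smooth. The very first step is to use the ray-speed bound \eqref{e:tislarge}: by shrinking $T$ and enlarging $V'$ as needed, I can arrange that $\mathscr{S}((-T-\delta_0,\delta_0); V)\subset(-T-\delta_0,\delta_0)\times V'$ and that $V'\subset\mathscr{S}(t;V')$ for $t$ in the relevant range (using trivial rays where $m\in\Sigma_{(2)}$ and short Hamiltonian flow elsewhere). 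I then take $\psi$ supported in $V'$ and strictly positive on the projection to $T^*X$ of $\mathscr{S}((-T+\delta_0,0); V)$.

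The time factor $h$ is shaped so that $h'(t)=h_1(t)^2-h_0(t)^2$ where $h_0,h_1\geq0$ are smooth with essentially disjoint supports: $h_1$ concentrated on $(-T-\delta_0,-T+\delta_0)$ and $h_0$ on $(-T+\delta_0,\delta_0)$ with $h_0>0$ on $(-T+\delta_0,0)$. Setting $\Psi_i := h_i\,\psi\,\rho^{1/2}$ gives (5), while (1)--(3), (6), (7) follow from the support and positivity structure (for (6) and (7) one uses that $V'$ contains the projection of the flow-out, as arranged in the first step). Conditions (4) and (8) form the heart of the proof. A direct analysis of (4), viewed as a quadratic-in-$\tau$ inequality on $\{\tau^2\geq(1-2\delta)a\}$, yields two pointwise requirements on $\Phi$: (a) $\Phi_t'\leq 0$, and (b) the sub-Riemannian gradient bound $|H_a\Phi|\leq 2\sqrt{(1-2\delta)a}\,|\Phi_t'|$ wherever $\Phi_t'<0$. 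Constraint (a) forces $h_1\leq h_0$ pointwise in $t$; constraint (b), which encodes the subelliptic character, forces $|H_a\psi|/\psi$ to be controlled by $\sqrt{a}$ uniformly. Since $H_a$ itself vanishes quadratically on $\{a=0\}$, (b) can be achieved by choosing $\psi$ adapted to the sub-Riemannian structure, e.g.\ built from slowly-varying $H_a$-invariants or mollified sR-distances so that $|\nabla\log\psi|$ is bounded.

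For (8), outside the exceptional region $\mathscr{S}((-T-\delta_0,-T+\delta_0); V')$ the support considerations (using $V'\subset\mathscr{S}(t;V')$) reduce the check to $t\in(-T+\delta_0,\delta_0)$ where $h'\leq 0$. For $m\notin\Sigma_{(2)}$ the time-function property is a consequence of (4). For $m\in\Sigma_{(2)}$, the explicit formula \eqref{e:gammam2} describes $\Gamma_m=\mathbb{R}^+(\partial_t+B)$ with $B$ determined by $a_m^*(\mathcal{I}(b))\leq 1$; the non-increase of $\Phi$ along $\partial_t+b$ reduces to the bound $-h'(t)/h(t)\geq 2\sup_{b\in B_m}b(\psi)/\psi$, which (using the sub-Riemannian adaptedness of $\psi$) is uniformly bounded in $m$, and can thus be enforced by shaping $h$ with sufficiently rapid logarithmic decay on $(-T+\delta_0,\delta_0)$ (e.g.\ an exponential profile $h(t)\sim e^{-\lambda(t+T)}$ with $\lambda$ large).

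The hard part will be reconciling the rigid pointwise constraints of (4) --- in particular the sub-Riemannian gradient bound (b) --- with the positivity requirement (7) at the doubly-characteristic base point $(x_0,\xi_0)$ where $a=0$. This tension requires $\Psi_0$ and $\Psi_1$ to have genuinely distinct spatial profiles (with $\Psi_1$ vanishing to higher order at $(x_0,\xi_0)$ than $\Psi_0$), and it forces the shape of $\psi$ to thread very carefully between the cone-field flow-out geometry and the zero locus of $a$; this is the point where the subelliptic, rather than elliptic, nature of the problem enters decisively.
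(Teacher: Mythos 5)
Your product ansatz $\Phi(t,x,\xi)=h(t)\,\psi(x,\xi)^2\,\rho(|\xi|)$ has a structural flaw that the last paragraph of your proposal gestures at but does not resolve, and which in fact cannot be resolved within that ansatz. Conditions (4) and (8) require $\Phi$ to be non-increasing along every direction of the cones $\Gamma_m$ at every point of $\{p\geq 0\}$ (resp.\ $\{p\geq -2\delta a\}$) outside the exceptional time window, and by \eqref{e:gammam2} the cone at a point of $\Sigma_{(2)}$ contains $\partial_t+b$ for \emph{all} $b\in{\rm ker}(a_m)^{\perp_{\omega_X}}$ with $a_m^*(\mathcal{I}(b))\leq 1$ --- for a sub-Laplacian, all lifted horizontal directions of speed $\leq 1$, not merely the characteristic (singular) directions. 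For your product form this forces, at all times $t$ where $h>0$ and the time-function property is required, the pointwise bound $-h'(t)/h(t)\geq \sup_{b}b(\psi^2)/\psi^2$ (and the $\sqrt{a}$-weighted analogue along $H_p$ where $a\neq 0$). The right-hand side necessarily blows up near the boundary of ${\rm supp}\,\psi$: since $\psi$ must be positive on the (projection of the) flow-out of $V$, which meets $\{a=0\}$, and must vanish outside a bounded conic set, the set $\partial({\rm supp}\,\psi)$ meets regions where cone directions are transverse to the level sets of $\psi$, and for any smooth compactly supported $\psi$ the logarithmic derivative along such a direction is unbounded as $\psi\to 0^+$. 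Your proposed escape --- choose $\psi$ ``so that $|\nabla\log\psi|$ is bounded'' along the relevant directions --- is exactly the impossible requirement: a bounded logarithmic derivative along the lifted horizontal fields $H_{h_{Y_j}}$ (whose orbits, by the Hörmander condition, leave any bounded conic neighborhood of $(x_0,\xi_0)$) would force $\psi$ to stay bounded away from $0$ along those orbits, contradicting compact support; equivalently, the zero set of a product-form $\Phi$ with fixed spatial profile cannot be forward-invariant under the cone field, so a forward ray crossing $\partial({\rm supp}\,\psi)$ inward makes $\Phi$ increase, violating (8). No choice of $h$ (exponential or otherwise) repairs this, because $-h'/h$ is finite on the compact time range where (7) forces $h>0$, while the spatial quotient is unbounded there.

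The cure is precisely what your ansatz forbids: the spatial profile of $\Phi$ must depend on $t$ and be adapted to the flow-out sets. The paper's construction prescribes $\Phi$ directly along (backward) rays, taking its support to be $\mathscr{S}([-T-\delta_0,\delta_0];V')$, whose complement is forward-invariant by concatenation of rays, so monotonicity along $\Gamma_m$ holds by construction everywhere outside $\mathscr{S}([-T-\delta_0,-T+\delta_0];V')$; homogeneity of degree $\alpha$ is obtained via the Homogeneity Property by imposing the monotone profile along the ``worst'' backward rays diving toward small $|\xi|$, the support and positivity statements (6)--(7) come from the closedness and inner semi-continuity of $(T,x,\xi)\mapsto\mathscr{S}(-T;(x,\xi))$ (Lemma \ref{l:Sinnercontinuous}), one then sets $\Psi_0=\sqrt{-\Phi_t'}$ and $\Psi_1=\sqrt{\Phi_t'+\Psi_0^2}$, and (4) is obtained by running the same construction with the slightly enlarged cones $\widetilde{\Gamma}_m$ (threshold $(1-2\delta)^{-1/2}$) and invoking the computation \eqref{e:ineqam*}. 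Your reduction of (4) to the two pointwise constraints (monotonicity in $t$ plus a $\sqrt{a}$-weighted gradient bound) is correct in spirit --- it is the content of Lemma \ref{l:falselemma} read backwards --- but the construction carrying those constraints must be ray-adapted, not of product type; also note that the cutoff $\rho(|\xi|)$ breaks the exact homogeneity demanded in (2), and the claim $V'\subset\mathscr{S}(t;V')$ fails near $\partial V'$ at points with $a\neq 0$ (fixable with an intermediate conic set, but as stated it is not correct).
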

All of the above properties of $\Phi$ will be used in Sections \ref{s:decompoc} and \ref{s:endprf} to prove Theorem \ref{t:singprop}. The rest of Section \ref{s:cstrtimefunction} is devoted to the proof of Lemma \ref{l:cstrtime}.

\smallskip

We fix $(x_0,\xi_0)\in T^*\Omega\setminus0$. As said in the introduction of Section \ref{s:proof}, we assume  that $a(x_0,\xi_0)=0$, and we set $\overline{m}=(0,0,x_0,\xi_0)\in \Sigma_{(2)}$ where the first two coordinates correspond to the variables $t,\tau$. For $m$ near $\overline{m}$, the cone $-\Gamma_m$ is the cone with base point $m$ and containing the opposite of the directions of $\Gamma_m$.

\smallskip

We are looking for a $\tau$-independent time function; since any ray lives in a slice $\tau=\text{const.}$ (see \eqref{e:GammaHm}, \eqref{e:-} and \eqref{e:gammam2}), we first construct $\Phi$ in the slice $\tau=0$, and then we extend $\Phi$ to any $\tau$ so that it does not depend on $\tau$.  If we start from a time function in $\{\tau=0\}$, then its extension is also a time function: indeed, the image of a ray contained in $\{\tau\neq 0,a=0\}$ under the map $\tau\mapsto 0$ is also a ray, this follows from the fact that $\R^+\partial_t\subset \Gamma_m$ for any $m\in\Sigma_{(2)}$ (see \eqref{e:gammam2}). Thus, the property of being non-increasing along $\Gamma_m$ is preserved under this extension process. Thus, in the sequel, we work in $\{\tau=0\}$ and do not care about (3).

\smallskip

We now explain why it is natural to impose condition (2) on time functions. Indeed, there is a global homogeneity in $\xi$ of the cones $\Gamma_m$ and consequently of the null-rays:

\smallskip

\noindent\textbf{Homogeneity Property.} If $[T_1,T_2]\ni t\mapsto\gamma(t)=(x(t),\xi(t))\in \{a=0\}$ is a null-ray parametrized by $t$, then for any $\lambda>0$, $[T_1,T_2]\ni t\mapsto \gamma_\lambda(t)=(x(t),\lambda\xi(t))$ is a null-ray parametrized by $t$. Note that $\gamma(t)$ and $\gamma_\lambda(t)$ have the same projection on $X$ for any $t\in [T_1,T_2]$.

\smallskip

This property, illustrated in Figure \ref{f:cones}, follows from \eqref{e:defgammam}. It will be helpful to find $\Phi$ satisfying Point (2) in Lemma \ref{l:cstrtime}. 
\begin{figure}[h!]
     \centering
     \captionsetup{justification=centering}
         \includegraphics[width=7.5cm]{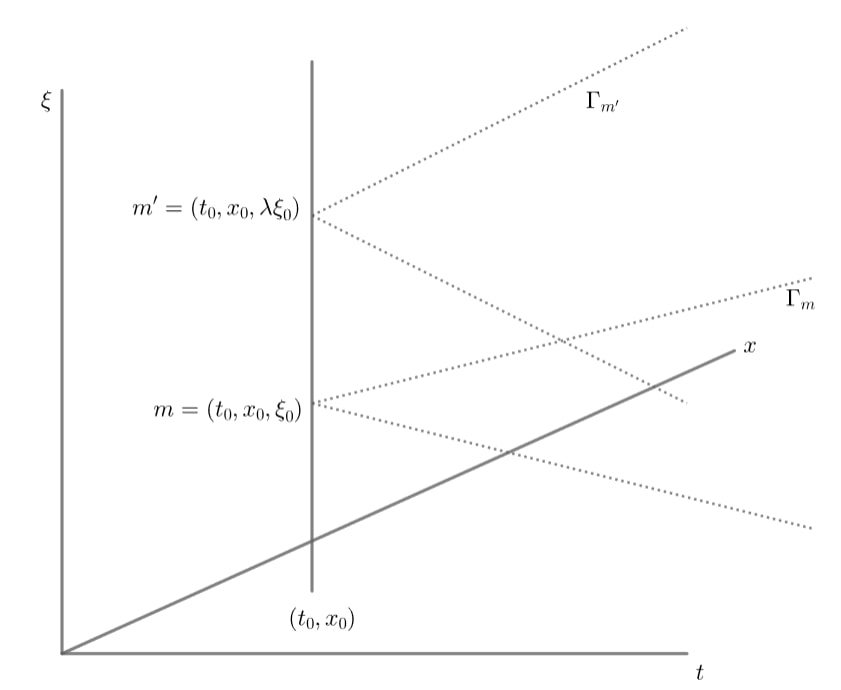}
\caption{The coordinates and the cones $\Gamma_m$. On the picture, the cone $\Gamma_{m'}$ has an aperture which is equal to $\lambda$ times the aperture of $\Gamma_m$ (this is ``homogeneity'').}
       \label{f:cones}
\end{figure}

\smallskip

 At this point we should say that since we are working in the slice $\{\tau=0\}$, we will use in the sequel the following convenient abuse of notations: for $m=(t,0,x,\xi)\in T^*\R\times T^*\Omega$, we still denote by $m$ the projection of $m$ on $\R\times T^*\Omega$ obtained by throwing away the coordinate $\tau=0$. The fact that the whole picture is now embedded in $\R^{2d+1}$ (see Figure \ref{f:cones}) is very convenient: for example, after throwing away the coordinate $\tau=0$, we see the cones $\Gamma_m$ as subcones of $\R^{2d+1}$ (and not of its tangent space).

\smallskip

Also, in the sequel, we only consider points for which $t\geq -T$ for some (small) $T>0$. We take  $0\leq\delta_0\leq T/10$.

  \smallskip
  
  We will define $\Phi$ so that along any ray ending near $(t=0,x_0,\xi_0)$, its profile looks like Figure \ref{f:touchdown}, which is the standard picture for time functions (see \cite[Figure E.2]{DZ}):
  \begin{figure}[h!]
\centering
  \includegraphics[width=12cm]{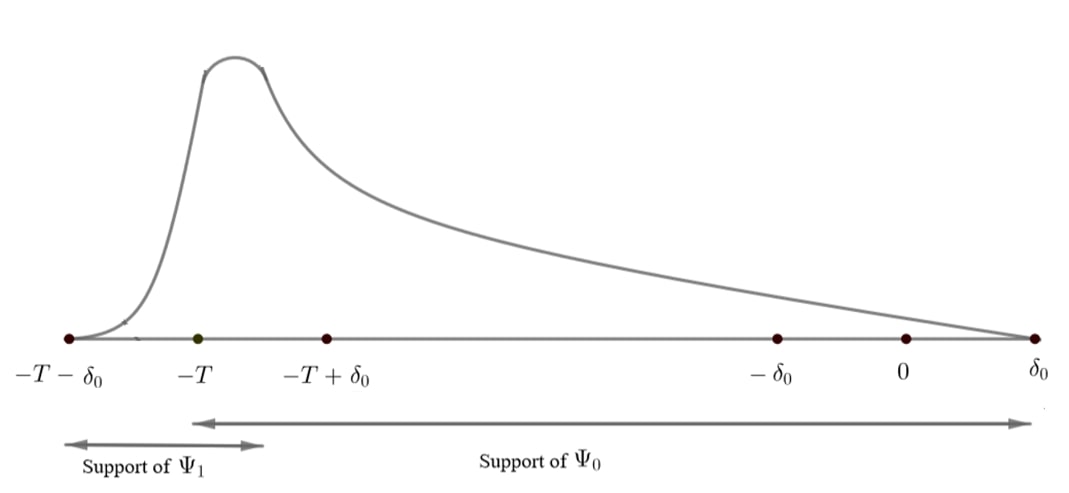}
  \captionsetup{justification=centering}
         \caption{Profile of the function $\Phi$ along a ray ending near $(t=0,x_0,\xi_0)$.\\ The abscissa is this ray, parametrized by $t$.}
         \label{f:touchdown}
\end{figure}

The support of $\Phi$ is a conic set of points (due to homogeneity in $\xi$), namely $\mathscr{S}([-T-\delta_0, \delta_0];V')$. To describe the construction of $\Phi$, we will go backwards in time, from right to left in Figure \ref{f:touchdown}, thus looking for $\Phi$ {\it increasing} along {\it backward} rays (at least up to time $-T+\delta_0$). Recall that backward rays are just integral curves of the field of cones $-\Gamma_m$. All the rays we consider in the sequel are parametrized by time.

\smallskip

The first important point is that rays enjoy closedness and continuity properties:
  \begin{lemma} \label{l:Sinnercontinuous}
\begin{enumerate}
\item For any closed $V\subset T^*\Omega$ and any $T\geq 0$, the set $\mathscr{S}(-T;V)$ is closed. 
\item The mapping $(T,x,\xi)\mapsto \mathscr{S}(-T;(x,\xi))$ is inner semi-continuous, meaning that when $(T_n,x_n,\xi_n)\rightarrow (T,x,\xi)$, any point obtained as a limit, as $n\rightarrow +\infty$, of points of $\mathscr{S}(-T_n; (x_n,\xi_n))$  belongs to $\mathscr{S}(-T;(x,\xi))$.
\end{enumerate}
\end{lemma}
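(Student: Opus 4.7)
My approach to both parts is identical: encode each membership witness as a ray, extract a convergent subsequence of rays using Arzelà--Ascoli, and identify the limit as a ray using the inner semi-continuity of the cone field $m \mapsto \Gamma_m$ proved in Lemma \ref{l:inner}.

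I would first record two preliminary facts. By the estimate \eqref{e:tislarge}, any forward ray parametrized by $t$ is locally $c$-Lipschitz in the $(x,\xi)$ variables. By \eqref{e:gammam2}, the cones $\Gamma_m$ contain no $\partial_\tau$ direction, so $\tau$ is constant along each ray. Moreover, the Homogeneity Property shows that the cone field is invariant under the rescaling $(\tau,\xi) \mapsto (r\tau, r\xi)$, so along a sequence of rays we may normalize $|\eta_n|$ to a fixed value; in this way the witness $\tau_n$ (constrained by $\tau_n^2 \geq a$ along the ray) stays bounded and admits a convergent subsequence.

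For Part 1, I take $(s_n, y_n, \eta_n) \in \mathscr{S}(-T; V)$ converging to $(s, y, \eta)$; clearly $s = -T$. Each point comes with $\tau_n \in \mathbb{R}$, an endpoint $(x_n, \xi_n) \in V$, and a ray $\gamma_n : [-T, 0] \to M_+$ parametrized by $t$ joining these data. After extracting, $\tau_n \to \tau$, and the $\gamma_n$'s are equi-Lipschitz on a fixed compact set. Arzelà--Ascoli then yields a uniformly convergent subsequence $\gamma_n \to \gamma : [-T, 0] \to M_+$, with $\gamma(-T) = (-T, \tau, y, \eta)$ and $\gamma(0) = (0, \tau, x, \xi)$ for some $(x, \xi) \in V$ (by closedness of $V$). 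To see $\gamma$ is a ray, I consider a tangent vector $v \in \gamma'(t)$: by definition it is realized as a limit of difference quotients along some sequence $t_k \to t$. By uniform convergence and a diagonal extraction, the same limit arises as a limit of vectors $v_{n_k} \in \Gamma_{\gamma_{n_k}(s_{n_k})}$ for some $s_{n_k} \to t$; Lemma \ref{l:inner} then gives $v \in \Gamma_{\gamma(t)}$. Part 2 follows the same scheme with the only extra point that $T_n$ is variable: I extend each $\gamma_n$ by a constant in the $(x,\xi)$ fibre beyond $-T_n$ to a common interval, apply Arzelà--Ascoli, and restrict the limit to $[-T,0]$. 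For any $t \in (-T, 0)$ and $n$ large, $t \in [-T_n, 0]$, so the differential inclusion holds for $\gamma_n$ at $t$; the closure argument above then yields the tangent condition on $(-T, 0)$, which extends to $[-T, 0]$ by continuity.

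\textbf{Main obstacle.} The delicate step is passing from $\gamma_n'(t) \subset \Gamma_{\gamma_n(t)}$ to $\gamma'(t) \subset \Gamma_{\gamma(t)}$ in the limit, especially because the set-valued derivative defined in the excerpt is a relatively weak object. What saves the day is precisely the form of inner semi-continuity proved in Lemma \ref{l:inner}---which is exactly the closure property needed for differential inclusions with closed convex-valued right-hand side---together with the Lipschitz bound \eqref{e:tislarge} that allows the Arzelà--Ascoli extraction to go through. A secondary technical nuisance, namely compactifying the fibre coordinate $\tau_n$, is handled by the homogeneity invariance of $\Gamma_m$ described above.
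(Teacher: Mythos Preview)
Your overall strategy---the locally uniform Lipschitz bound \eqref{e:tislarge}, Arzel\`a--Ascoli extraction, and then closure of the differential inclusion via Lemma \ref{l:inner}---is exactly the paper's one-sentence proof, and your elaboration of how inner semi-continuity of $m\mapsto\Gamma_m$ passes the tangency condition to the limit is correct and is indeed the heart of the matter.

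One step does not work as written: the claim that $\tau_n$ stays bounded. The constraint $\tau_n^2\geq a$ along the ray is a \emph{lower} bound on $|\tau_n|$, not an upper one, and you cannot ``normalize $|\eta_n|$'' via the Homogeneity Property because $(y_n,\eta_n)$ is a prescribed convergent sequence---rescaling the fibre variable would destroy its convergence to $(y,\eta)$. In fact, for an elliptic $a$ and $V=\{(y,\eta)\}$ a single point one checks directly that $\mathscr{S}(-T;V)$ is \emph{not} closed (the reachable set is a punctured segment), so some contextual hypothesis is being used. The paper's resolution is that this lemma is stated and applied after the explicit reduction to the slice $\tau=0$ near $\overline m\in\Sigma_{(2)}$ (see the paragraphs immediately preceding the lemma): in that slice every witnessing ray carries $\tau_n\equiv 0$, the compactness issue is vacuous, and your argument then goes through verbatim.
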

\begin{proof}
Both properties follow from the locally uniform Lipschitz continuity \eqref{e:tislarge} combined with the extraction of Lipschitz rays as in the Arzel\`a-Ascoli theorem and the fact that the cones $\Gamma_m$ are closed, convex and inner semi-continuous (Lemma \ref{l:inner})
\end{proof}
This lemma implies that in the statement of Lemma \ref{l:cstrtime}, the set $\mathscr{S}((-T+\delta_0,0);V)$ is just slightly larger than $\mathscr{S}((-T+\delta_0,0);(x_0,\xi_0))$ (and similarly for other sets $\mathscr{S}(\cdot,\cdot)$ involved in the statement of Lemma \ref{l:cstrtime}).

\smallskip

The second important point is the following. We recall that we look for $\Phi$ homogeneous of degree $\alpha\geq 0$ in $\xi$; in particular it is increasing in the fibers. To guarantee simultaneously this homogeneity and the fact that $\Phi$ increases along backward rays, we have to consider the ``worst rays'', namely backward rays which are ``diving'' towards smaller $|\xi|$: we impose that even along the rays (parametrized by time) diving most quickly towards small $|\xi|$, $\Phi$ increases. Then, by homogeneity, $\Phi$ is also increasing along any other ray.

\smallskip

We apply this procedure for defining $\Phi$ to backward rays emanating from a fixed closed conic neighborhood $V$ of $(x_0,\xi_0)$. Point 2 of Lemma \ref{l:Sinnercontinuous} implies that 
\begin{equation} \label{e:V}
\begin{split}
&\text{if $V$ is a sufficiently small neighborhood of $(x_0,\xi_0)$,}\\
&\text{$\Phi$ is strictly increasing from time $0$ to time $-T+\delta_0$}\\
\text{along }&\text{any backward-pointing ray starting from any point $(x,\xi)\in V$.}
\end{split}
\end{equation}
We choose $V'$ a sufficiently small neighborhood of $V$, we can impose that $\Phi$ vanishes at any $(t,x,\xi)\notin \mathscr{S}((-T+\delta_0,\delta_0);V')$ for $t\in (-T+\delta_0,\delta_0)$.
 
 \smallskip
 
 Up to now, our construction defines $\Phi$ only for times satisfying $-T+\delta_0\leq t \leq \delta_0$. To complete the constrution, we extend it arbitrarily (but smoothly) in $\mathscr{S}([-T-\delta_0,-T+\delta_0],V')$ so that it vanishes for $t\leq -T-\delta_0$ (refer again to Figure \ref{f:touchdown}).
 
 \smallskip

Note that with our construction, $\Phi$ is decreasing along any backward ray, even along backward rays coming from the region $\{\Phi=0\}$ and entering $\{\Phi\neq 0\}$, and not just along those emanating from $(0,x_0,\xi_0)$ or a nearby point. The only place where the behaviour of $\Phi$ is not controlled is $\mathscr{S}([-T-\delta_0,-T+\delta_0],V')$, and this is why $\Phi$ is a time function on $\mathscr{S}([-T+\delta_0,\delta_0],V')$ and not on the whole larger set $\mathscr{S}([-T-\delta_0,\delta_0],V')$ (see Property (8)).

\smallskip

For $t\geq -T+\delta_0$, we have $\Phi_t'\leq 0$ since $\partial_t\in \Gamma_m$, and thus we set $\Psi_0=\sqrt{-\Phi_t'}$. Then, following the rays backwards in time, we make $\Psi_0$ fall to $0$ between times $-T+\delta_0$ and $-T$ (see Figure \ref{f:touchdown}). Similarly, following the rays backward from time $-T+\delta_0$ to time $-T-\delta_0$, we extend $\Phi$ smoothly and homogeneously (in the fibers in $\xi$) in a way that $\Phi$ is compactly supported in the time-interval $(-T-\delta_0, \delta_0)$ and $\Phi_t'+\Psi_0^2\geq 0$. Finally, we set $\Psi_1=\sqrt{\Phi_t'+\Psi_0^2}$. 

\smallskip 

In Lemma \ref{l:cstrtime}, Properties (1), (2), (3), (5), (6), (8) follow from the construction. Property (7) follows from \eqref{e:V}. 

\smallskip

Finally, let us explain why Property (4) holds. It follows from our construction that for some $\delta>0$, $\Phi$ is decreasing along rays computed with respect to
\begin{equation*}
\begin{split}
& \qquad \qquad \qquad \widetilde{\Gamma}_m=\R^+(\partial_t+\widetilde{B} ), \\
& \widetilde{B}=\left\{b \in {\rm ker}(a_m)^{\perp_{\omega_{X}}}, \ a_m^*(\mathcal{I}(b))\leq (1-2\delta)^{-1/2}\right\}.
\end{split}
\end{equation*}
instead of $\Gamma_m$ (i.e., integral curves of the field of cones $\widetilde{\Gamma}_m$); note that $\widetilde{\Gamma}_m$ is strictly larger than $\Gamma_m$ due to \eqref{e:gammam2}. In fact, for $\delta>0$ sufficiently small (depending on $V$), these new rays are included in $\mathscr{S}((-T+\delta_0,\delta_0);V)$. Then, in view of the proof of Lemma \ref{l:inner}, and notably \eqref{e:ineqam*}, this gives Property (4).


\subsection{A decomposition of $C$} \label{s:decompoc}

When $\Phi$ satisfies (2), (3), (4) and (5) in Lemma \ref{l:cstrtime}, the operator $C$ given by \eqref{e:melrose6.1} can be expressed as follows:
\begin{proposition} \label{p:decompoC}
If $\Phi$ satisfies (2), (3), (4) and (5) in Lemma \ref{l:cstrtime}, then writing $\Phi_t'=\Psi_1^2-\Psi_0^2$, there holds
\begin{equation} \label{e:decompoc}
C=C'+R+R'P+PR'-\delta(\Op(\Psi_0) A\Op(\Psi_0)+D_t\Op(\Psi_0)^2D_t)
\end{equation}
 where $\delta>0$ is the same as in (4), $$R'=-\frac{\delta}{2}\Op(\Phi_t')\in \Psi^\alpha_{\phg}, \qquad R=\delta\Op(\Psi_1)(D_t^2+A)\Op(\Psi_1)\in \Psi^{2+\alpha}_{\phg},$$ and $C'\in \Psi^{2+\alpha}_{\phg}$ has non-positive principal symbol and vanishing subprincipal symbol.
\end{proposition}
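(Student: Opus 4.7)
Plan: The approach is a direct verification of the decomposition by computing principal and subprincipal symbols of each operator on the right hand side using the Weyl calculus, and matching the result against $\sigma_p(C)=\tau H_p\Phi-\Phi_t' p$ and $\sigma_{\text{sub}}(C)=0$ from \eqref{e:sigma2C} and \eqref{e:sbp}.

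First I compute the principal symbols using the composition rule \eqref{e:composition}. Since $A$ has vanishing subprincipal symbol, $D_t^2+A=\Op(\tau^2+a)$ modulo $\Psi^0_\phg$, and a direct computation gives $\sigma_p(R)=\delta\Psi_1^2(\tau^2+a)$ and $\sigma_p(-\delta(\Op(\Psi_0)A\Op(\Psi_0)+D_t\Op(\Psi_0)^2 D_t))=-\delta\Psi_0^2(\tau^2+a)$. For the anticommutator, $\sigma_p(R'P+PR')=2\sigma_p(R')\sigma_p(P)=-\delta\Phi_t' p$. Using $\Phi_t'=\Psi_1^2-\Psi_0^2$ and $p=\tau^2-a$, the three identified contributions sum to
\begin{equation*}
\delta(\Psi_1^2-\Psi_0^2)(\tau^2+a)-\delta\Phi_t' p=\delta\Phi_t'(\tau^2+a)-\delta\Phi_t'(\tau^2-a)=2\delta a\Phi_t',
\end{equation*}
so that $\sigma_p(C')=\sigma_p(C)-2\delta a\Phi_t'=\tau H_p\Phi-\Phi_t'(p+2\delta a)$.

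Next I check that all the subprincipal symbols vanish. For a symmetric sandwich $\Op(\psi)B\Op(\psi)$ with $\psi$ a real homogeneous symbol and $B$ self-adjoint with real principal symbol $b$ and vanishing subprincipal, the Weyl composition gives $\Op(\psi)B\Op(\psi)=\Op(\psi^2 b)$ modulo two orders below, because the cross-terms $\frac{\psi}{2i}\{\psi,b\}$ and $\frac{1}{2i}\{\psi b,\psi\}=-\frac{\psi}{2i}\{\psi,b\}$ cancel by antisymmetry of the Poisson bracket; this applies to $R$, to $\Op(\Psi_0)A\Op(\Psi_0)$, and (with $\psi=\tau$) to $D_t\Op(\Psi_0)^2 D_t$. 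For the anticommutator $R'P+PR'$, the subprincipal symbol $\frac{1}{2i}(\{R',p\}+\{p,R'\})$ vanishes by the same antisymmetry. Combined with $\sigma_{\text{sub}}(C)=0$, this gives $\sigma_{\text{sub}}(C')=0$ and fixes the order of $C'$ at $2+\alpha$.

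The main obstacle is to establish the non-positivity of $\sigma_p(C')=\tau H_p\Phi-\Phi_t'(p+2\delta a)$. On the set $\{p+2\delta a\geq 0\}$, property (4) of Lemma \ref{l:cstrtime} provides $\tau H_p\Phi\leq 0$; the decomposition is crafted so that in the subregion where $\Phi_t'\geq 0$ (i.e., $\Psi_1$ dominates $\Psi_0$), the second summand $-\Phi_t'(p+2\delta a)$ is also $\leq 0$ and one concludes directly. The delicate part is the region where $\Phi_t'=\Psi_1^2-\Psi_0^2<0$ while $p+2\delta a\geq 0$: here one uses that by construction $\Phi$ is non-increasing not merely along $\Gamma_m$ but along the strictly enlarged cone field $\widetilde{\Gamma}_m$ of Section \ref{s:cstrtimefunction} (with bound $(1-2\delta)^{-1/2}$ in place of $1$). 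A Cauchy-Schwarz argument parallel to \eqref{e:ineqam*} then sharpens property (4) into the quantitative bound on $\tau H_p\Phi$ required to absorb the bad term $-\Phi_t'(p+2\delta a)$, closing the non-positivity.
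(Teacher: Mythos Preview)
Your symbolic computations (principal and subprincipal symbols of $R$, $R'P+PR'$, and the $\Psi_0$ terms) are correct and indeed yield $\sigma_p(C')=\tau H_p\Phi-\Phi_t'(p+2\delta a)$ and $\sigma_{\text{sub}}(C')=0$, exactly as in the paper.

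The genuine gap is in the non-positivity argument. Your case analysis only treats the region $\{p+2\delta a\geq 0\}$; you never discuss the open set $\{p+2\delta a<0\}=\{|\tau|<\sqrt{(1-2\delta)a}\}$, where property~(4) gives no information on the sign of $\tau H_p\Phi$ and the term $-\Phi_t'(p+2\delta a)$ can have either sign. Moreover, even in your ``delicate'' case ($\Phi_t'<0$, $p+2\delta a\geq 0$), the appeal to the enlarged cones $\widetilde{\Gamma}_m$ is circular: property~(4) is precisely the analytic translation of the enlarged-cone monotonicity, so invoking the latter again adds nothing. The vague ``Cauchy--Schwarz argument parallel to \eqref{e:ineqam*}'' does not supply the needed quantitative bound; \eqref{e:ineqam*} concerns the Hessian $a_m$ near $\Sigma_{(2)}$, whereas $\sigma_p(C')\leq 0$ must hold pointwise on all of $T^*(\R\times X)$, including far from $\Sigma_{(2)}$.

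The paper closes this gap with Lemma~\ref{l:falselemma}, which crucially exploits property~(3) (the $\tau$-independence of $\Phi$, which you list but never use). Since $\Phi$ does not depend on $\tau$, one has $\tau H_p\Phi=2\Phi_t'\tau^2-\{a,\Phi\}\tau$, a quadratic in $\tau$ vanishing at $\tau=0$. Property~(4) forces this quadratic to be $\leq 0$ for $|\tau|\geq\sqrt{(1-2\delta)a}$, which pins down $\Phi_t'\leq 0$ and forces the second root $\{a,\Phi\}/(2\Phi_t')$ to lie in $[-\sqrt{(1-2\delta)a},\sqrt{(1-2\delta)a}]$, i.e.\ $\{a,\Phi\}^2\leq 4(1-2\delta)a\,(\Phi_t')^2$. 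Completing the square then gives $\tau H_p\Phi-\Phi_t'p\leq 2\delta a\Phi_t'$ for \emph{all} $\tau$, which is exactly $\sigma_p(C')\leq 0$. This elementary algebraic step, not a geometric or Cauchy--Schwarz argument, is the missing ingredient.
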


 We start the proof of this proposition with the following improved (and corrected) version of \cite[Lemma 5.3]{Mel86}:
\begin{lemma} \label{l:falselemma}
Let $\phi$ be a time function near $\overline{m}\in\Sigma_{(2)}$ which does not depend on $\tau$ and such that
\begin{equation}\label{e:strongertimefunction}
 \tau H_p\phi\leq 0 \qquad \text{on} \quad \{p\geq -2\delta a\}.
\end{equation}
Then there holds
\begin{equation} \label{e:ineqHp}
 \tau H_p\phi\leq \phi_t'(p+ 2\delta a)
\end{equation}
in a neighborhood of $\overline{m}$.
\end{lemma}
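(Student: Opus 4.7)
\emph{Plan of proof.} The starting observation is that since $\phi$ does not depend on $\tau$ and $p = \tau^2 - a$, one has
\begin{equation*}
H_p\phi \;=\; 2\tau\,\phi_t' \;-\; H_a\phi,
\end{equation*}
so that the desired inequality $\tau H_p\phi \leq \phi_t'(p+2\delta a)$ is equivalent, after substituting $p=\tau^2-a$ and rearranging, to
\begin{equation*}
F(t,\tau,x,\xi)\;:=\;\phi_t'\,\tau^2 \;-\; \tau\,H_a\phi \;+\; (1-2\delta)\,a\,\phi_t' \;\leq\; 0.
\end{equation*}
At fixed $(t,x,\xi)$, this is a single quadratic polynomial in $\tau$, and the key point is that the hypothesis \eqref{e:strongertimefunction} is precisely a sign condition on this quadratic over the region $\{\tau^2 \geq (1-2\delta)a\}$.

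The strategy is to extract, from the hypothesis, the two scalar inequalities
\begin{equation*}
\phi_t'(t,x,\xi)\leq 0 \qquad\text{and}\qquad (H_a\phi)^2(t,x,\xi)\;\leq\; 4(1-2\delta)\,a(x,\xi)\,\phi_t'(t,x,\xi)^2,
\end{equation*}
and then to conclude by a purely algebraic maximization of the quadratic $F$ in $\tau$. For the first step I would shrink the neighborhood so that, for every $(t,x,\xi)$ in it, the two points $\tau_\pm:=\pm\sqrt{(1-2\delta)\,a(x,\xi)}$ also give points $(t,\tau_\pm,x,\xi)$ in the neighborhood where \eqref{e:strongertimefunction} applies; this is possible because $a(x_0,\xi_0)=0$ and $a$ is continuous, so $\tau_\pm\to 0$ as $(x,\xi)\to(x_0,\xi_0)$. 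Since $\tau_\pm$ lie on the boundary $\{p=-2\delta a\}$, plugging them into \eqref{e:strongertimefunction} and dividing by $\tau_+$ yields the two-sided bound
\begin{equation*}
2\tau_+\,\phi_t' \;\leq\; H_a\phi \;\leq\; -2\tau_+\,\phi_t',
\end{equation*}
which immediately gives both $\phi_t'\leq 0$ and the quantitative Cauchy–Schwarz-type bound above.

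To finish, view $F$ as a parabola in $\tau$; since $\phi_t'\leq 0$ it opens downward (in the degenerate case $\phi_t'=0$ the scalar bound forces $H_a\phi=0$ and hence $F\equiv 0$, which is fine). When $\phi_t'<0$, the maximum of $F$ over $\tau\in\R$ is
\begin{equation*}
(1-2\delta)\,a\,\phi_t' \;-\; \frac{(H_a\phi)^2}{4\phi_t'} \;=\; \frac{4(1-2\delta)\,a\,(\phi_t')^2 \;-\; (H_a\phi)^2}{4\phi_t'},
\end{equation*}
whose numerator is $\geq 0$ by the previous step and whose denominator is $<0$, so the maximum is $\leq 0$. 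Consequently $F\leq 0$ for every $\tau$, and in particular at the specific $\tau$ component of any point in the neighborhood of $\overline m$.

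The main (minor) obstacle is really the bookkeeping about neighborhoods: the hypothesis is only a sign condition on part of phase space, and one has to make sure that the two test values $\tau=\pm\sqrt{(1-2\delta)a(x,\xi)}$ actually lie in the open set where the hypothesis is available. This is why the conclusion must be stated in a \emph{possibly smaller} neighborhood of $\overline m$, and why the proof uses in an essential way that $\overline m\in\Sigma_{(2)}$ so that $a$ vanishes at the base point. Apart from this, the argument is a self-contained algebraic reduction to the one-variable trick of evaluating a quadratic at the two zeros of its admissible-region boundary.
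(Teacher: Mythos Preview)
Your proof is correct and follows essentially the same route as the paper's: both rewrite $\tau H_p\phi$ as a quadratic $b\tau^2 - c\tau$ in $\tau$ (with $b=2\phi_t'$, $c=H_a\phi$), extract from the hypothesis the two scalar constraints $\phi_t'\leq 0$ and $(H_a\phi)^2\leq 4(1-2\delta)a(\phi_t')^2$, and then complete the square to conclude. Your version is in fact slightly more careful about the neighborhood bookkeeping (explicitly shrinking so that the test points $\tau_\pm=\pm\sqrt{(1-2\delta)a}$ remain in the domain of the hypothesis, using $a(\overline m)=0$), whereas the paper informally invokes ``$\tau$ sufficiently large''; the only minor point you leave implicit is the degenerate case $a(x,\xi)=0$, where $\tau_+=0$ and one cannot divide, but this is harmless since then $da=0$ forces $H_a\phi=0$ and the inequality reduces to $\phi_t'\tau^2\leq 0$.
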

Note that for any time function, the inequality \eqref{e:strongertimefunction} holds on the smaller set $\{p\geq 0\}$. Assuming \eqref{e:strongertimefunction} is a stronger requirement.

\begin{proof}[Proof of Lemma \ref{l:falselemma}]
Since $\phi$ does not depend on $\tau$, we know that $q=\tau \{p,\phi\}$ is a quadratic polynomial in $\tau$, vanishing at $\tau=0$:
\begin{equation*}
q=b\tau^2-c\tau, \quad p=\tau^2-a, \quad a\geq 0.
\end{equation*}
More explicitly, $b=2\phi_t'$ and $c=\{a,\phi\}$.
From \eqref{e:strongertimefunction}, we know that $b\tau^2-c\tau\leq 0$ for $\tau$ sufficiently large, hence $b\leq 0$. Moreover, \eqref{e:strongertimefunction} also implies that if $b=0$, then $c=0$, hence $\phi_t'=H_p\phi=0$, and \eqref{e:ineqHp} is automatically satisfied. Otherwise, $b<0$. Since $q\leq 0$ on $\tau\notin [-((1-2\delta)a)^{1/2},((1-2\delta)a)^{1/2}]$ by \eqref{e:strongertimefunction}, we get that the other zero of $q$, $\tau=c/b$, must lie in $[-((1-2\delta)a)^{1/2},((1-2\delta)a)^{1/2}]$. Thus, $c^2\leq b^2a(1-2\delta)$. Then, 
\begin{equation*}
\tau\{p,\phi\}-\phi_t'p=\frac12 b(\tau-c/b)^2+(b^2a-c^2)/2b\leq  ba\delta=2\phi_t'a\delta
\end{equation*}
where we used that $b<0$. 
\end{proof}

\begin{proof}[Proof of Proposition \ref{p:decompoC}]
Setting $r'=-\frac{\delta}{2}\Phi_t'$, we have according to Lemma \ref{l:falselemma} with $\phi=\Phi$:
\begin{equation} \label{e:ineqsymbols}
\tau\{p,\Phi\}-\Phi_t'p-2r'p\leq 2\Phi_t'a\delta+\Phi_t'p\delta=\Phi_t'\delta(\tau^2+a)=\delta (\Psi_1^2-\Psi_0^2)(\tau^2+a).
\end{equation}
We set $R=\delta\Op(\Psi_1)(D_t^2+A)\Op(\Psi_1)$ and $R'=\Op(r')$. It follows from \eqref{e:ineqsymbols}, \eqref{e:sigma2C}, \eqref{e:sbp} and \eqref{e:composition} that the operator 
\begin{equation}\label{e:C'}
C'=C-R-(R'P+PR')+\delta(\Op(\Psi_0) A\Op(\Psi_0)+D_t\Op(\Psi_0)^2D_t)
\end{equation}
has non-positive principal symbol and vanishing subprincipal symbol. This proves Proposition \ref{p:decompoC}.
\end{proof}

%

\subsection{The Fefferman-Phong inequality}

The Fefferman-Phong inequality \cite{FP78} (see also \cite[Section 2.5.3]{Lerner}) can be stated as follows: for any pseudodifferential operator $C_1'$ of order $2+\alpha$ whose (Weyl) symbol is non-positive, there holds for any $u\in C_c^\infty(\R^n)$, 
\begin{equation} \label{e:fpineq}
(C_1'u,u)_{L^2(\R^n)}\leq c(({\rm Id}-\Delta)^{\alpha/2}u,u)_{L^2(\R^n)}
\end{equation}
where $\Delta$ is a Riemannian Laplacian on $\R^n$.
The following lemma is a simple microlocalization of this inequality. The definition of the essential support, denoted by ${\rm essupp}$, is recalled in Appendix \ref{a:pseudo}.

\begin{lemma} \label{l:feffphong}
Let $\alpha\geq 0$, and let $W,W'\subset T^*(\R\times \Omega)$ be conic sets such that $W'$ is a conic neighborhood of $W$. Let $C'\in \Psi_{\phg}^{2+\alpha}$ with ${\rm essupp}(C')\subset W$ such that $\sigma_p(C')\leq 0$ and $\sigma_{\text{sub}}(C')\leq 0$. Then there exists $C_\alpha\in\Psi^{\alpha/2}_{\rm phg}$ with ${\rm essupp}(C_\alpha)\subset W'$ such that
\begin{equation} \label{e:modfpineq}
\forall u \in C_c^\infty(\R\times \Omega), \qquad (C'u,u)_{L^2}\leq c(\|C_\alpha u\|_{L^2}^2+\|u\|_{L^2}^2).
\end{equation}
\end{lemma}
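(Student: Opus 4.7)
The plan is to reduce Lemma \ref{l:feffphong} to the classical (global) Fefferman--Phong inequality \eqref{e:fpineq} by a standard microlocalization. I first choose a conic cutoff $\chi \in S^0_{\phg}(T^*(\R \times \Omega))$ essentially supported in $W'$ and equal to $1$ on a conic neighborhood of ${\rm essupp}(C') \subset W$. Because $\chi \equiv 1$ near ${\rm essupp}(C')$, the two products $(1 - \Op(\chi))\, C'$ and $C'\, (1 - \Op(\chi))$ lie in $\Psi^{-\infty}_{\phg}$. Decomposing $u = \Op(\chi) u + (1 - \Op(\chi)) u$ and expanding $(C' u, u)_{L^2}$, the three cross terms each involve a sandwich containing such a smoothing factor and therefore contribute $O(\|u\|_{L^2}^2)$, so
\begin{equation*}
(C' u, u)_{L^2} = (C' v, v)_{L^2} + O(\|u\|_{L^2}^2), \qquad v := \Op(\chi) u.
\end{equation*}

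I then apply a ``two-term'' version of Fefferman--Phong to $C'$ acting on $v$. Writing the full Weyl symbol of $C'$ as $c \sim c_{2+\alpha} + c_{1+\alpha} + r$ with $c_{2+\alpha} = \sigma_p(C')$ and $c_{1+\alpha} = \sigma_{\text{sub}}(C')$ (suitably truncated near $\xi = 0$ to yield bona fide smooth symbols on all of $T^*(\R \times \Omega)$), the two leading components are pointwise $\leq 0$ by hypothesis and $r \in S^{\alpha}_{\phg}$. Consequently $b := c_{2+\alpha} + c_{1+\alpha}$ is a globally non-positive symbol of order $2+\alpha$, so \eqref{e:fpineq} applied to $\Op(b)$ gives $(\Op(b) v, v)_{L^2} \leq c \|v\|_{H^{\alpha/2}}^2$. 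The remainder $\Op(r) \in \Psi^{\alpha}_{\phg}$ is handled by duality: conjugating by $\Op(\langle \xi \rangle^{-\alpha/2})$ on each side turns $\Op(r)$ into an operator in $\Psi^0_{\phg}$, which is bounded on $L^2$, hence $|(\Op(r) v, v)_{L^2}| \leq c' \|v\|_{H^{\alpha/2}}^2$.

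Setting $C_\alpha := \Op(\langle \xi \rangle^{\alpha/2}) \Op(\chi) \in \Psi^{\alpha/2}_{\phg}$, whose essential support is contained in ${\rm essupp}(\Op(\chi)) \subset W'$, one has $\|v\|_{H^{\alpha/2}} = \|C_\alpha u\|_{L^2}$, and combining with the first paragraph yields \eqref{e:modfpineq}. The main technical point I expect to wrestle with is that \eqref{e:fpineq} requires the \emph{full} Weyl symbol of its argument to be non-positive, not merely the top homogeneous piece; this is why the hypothesis $\sigma_{\text{sub}}(C') \leq 0$ is indispensable and why one must peel off \emph{two} homogeneous components before absorbing the rest into the Sobolev cushion offered by \eqref{e:fpineq}.
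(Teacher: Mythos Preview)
Your proof is correct and follows essentially the same approach as the paper's own: both introduce a conic cutoff $\chi$ supported in $W'$ and equal to $1$ near $W$, discard the cross terms as smoothing, split $C'$ into a piece with non-positive full Weyl symbol (your $\Op(b)$, the paper's $C_1'$) to which Fefferman--Phong applies and a remainder in $\Psi^{\alpha}_{\phg}$ controlled by $\|C_\alpha u\|_{L^2}^2$ via $L^2$-boundedness of order-zero operators, with $C_\alpha = (\Id-\Delta)^{\alpha/4}\Op(\chi)$ (equivalently your $\Op(\langle\xi\rangle^{\alpha/2})\Op(\chi)$).
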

\begin{proof} Taking a microlocal cut-off $\chi$ homogeneous of order $0$, essentially supported in $W'$ and equal to $1$ on a conic neighborhood of $W$, we see that
\begin{align}
(C'u,u)&=(C'(\Op(\chi)+\Op(1-\chi))u,(\Op(\chi)+\Op(1-\chi))u)\nonumber\\
&=(\Op(\chi)C'\Op(\chi)u,u)+(Q'u,u) \label{e:split1}
\end{align}
where $Q'\in\Psi^{-\infty}$ is explicit:
\begin{equation*}
Q'=\Op(1-\chi)C'\Op(\chi)+\Op(\chi)C'\Op(1-\chi)+\Op(1-\chi)C'\Op(1-\chi).
\end{equation*}
Since $Q'\in\Psi^{-\infty}$, we have in particular
\begin{equation} \label{e:split4}
(Q'u,u)\leq c\|u\|^2_{L^2}.
\end{equation}
Then, we write $C'=C_1'+C_2'$ where $C_1'$ has non-positive full Weyl symbol, and $C_2'\in\Psi_{\rm phg}^\alpha$. First, we apply \eqref{e:fpineq} with $\Op(\chi)u$ instead of $u$: we obtain 
\begin{equation}\label{e:split2}
(\Op(\chi)C_1'\Op(\chi)u,u)\leq c\|C_\alpha u\|_{L^2}^2
\end{equation}
with $C_\alpha=(\Id-\Delta)^{\alpha/4}\Op(\chi)$. Secondly, writing $C_2'=(\Id-\Delta)^{\alpha/4}C_2''(\Id-\Delta)^{\alpha/4}$ with $C_2''\in \Psi_{\rm phg}^0$, we see that 
\begin{equation}\label{e:split3}
(\Op(\chi)C_2'\Op(\chi)u,u)\leq c\|C_\alpha u\|_{L^2}^2.
\end{equation}
Combining \eqref{e:split1}, \eqref{e:split4}, \eqref{e:split2} and \eqref{e:split3}, we get \eqref{e:modfpineq}.
 \end{proof}

\subsection{End of the proof of Theorem \ref{t:singprop}} \label{s:endprf}

We come back to the proof of Theorem \ref{t:singprop}. We fix $(x_0,\xi_0)\in T^*\Omega\setminus 0$ and consider $u$ a solution of \eqref{e:subwave}. For the moment, we assume that $u$ is \emph{smooth}. We consider a time function $\Phi$ as constructed in Lemma \ref{l:cstrtime}. 

\smallskip

%


Using \eqref{e:decompoc}, we have
\begin{align*}
0&=2\text{Im} (Pu, \Op(\Phi) D_tu)\\
&=(Cu,u)\\
&=((C'+R+R'P+PR'-\delta(\Op(\Psi_0) A\Op(\Psi_0) +  D_t\Op(\Psi_0)^2D_t ))u,u).
\end{align*}
Hence, using $Pu=0$ and applying Lemma \ref{l:feffphong} to $C'$, we get:
\begin{align*}
(A\Op(\Psi_0)  u,\Op(\Psi_0)  u)+\|\Op(\Psi_0) D_t u\|_{L^2}^2&\leq  c((R_\alpha+R'P+PR'+C')u,u)\\
&\leq c_\alpha (\|C_\alpha u\|_{L^2}^2+\|u\|_{L^2}^2+(R_\alpha u,u)).
\end{align*}
with $c_\alpha\geq 1/\delta$ and $R_\alpha=R$, just to keep in mind in the forthcoming inequalities that it depends on $\alpha$.

\smallskip

But $(A\Op(\Psi_0)  u,\Op(\Psi_0) u)\geq \frac1c ((-\Delta)^s \Op(\Psi_0) u,\Op(\Psi_0) u)-\|\Op(\Psi_0)  u\|^2$ by subellipticity \eqref{e:subelliptic}. Hence
\begin{equation} \label{e:propsing}
\|(-\Delta)^{s/2} \Op(\Psi_0)  u\|_{L^2}^2+\|\Op(\Psi_0)D_t  u\|_{L^2}^2\leq c_\alpha(\|C_\alpha u\|_{L^2}^2+\|u\|_{L^2}^2 +(R_\alpha u,u)+\|\Op(\Psi_0)  u\|_{L^2}^2)
\end{equation}
which we decompose into
\begin{equation}\label{e:propsing1}
\|(-\Delta)^{s/2} \Op(\Psi_0)  u\|_{L^2}^2\leq c_\alpha(\|C_\alpha u\|_{L^2}^2+\|u\|_{L^2}^2+(R_\alpha u,u)+\|\Op(\Psi_0)  u\|_{L^2}^2)
\end{equation}
and
\begin{equation}
\|\Op(\Psi_0)D_t  u\|_{L^2}^2\leq c_\alpha(\|C_\alpha u\|_{L^2}^2+\|u\|_{L^2}^2+(R_\alpha u,u)+\|\Op(\Psi_0)  u\|_{L^2}^2). \label{e:propsing2}
\end{equation}

Now, assume that $u$ is a general solution of \eqref{e:subwave}, not necessarily smooth. We have $u\in C^0(\R;\mathscr{D}(A^{1/2}))\cap C^1(\R;L^2(\Omega))$. We recall that $\Omega$ is a subset of $\R^d$. We have the following definition.
\begin{definition}
Let $s_0\in\R$ and $f\in\mathcal{D}'(\Omega)$. We shall say that $f$ is $H^{s_0}$ at $(x,\xi)\in T^*\Omega\setminus 0$ if there exists a conic neighborhood $W$ of $(x,\xi)$ such that for any $0$-th order pseudodifferential operator $B$ with ${\rm essupp}(B)\subset W$, we have $Bf\in H^s_{\text{loc}}(\Omega)$. \\
We shall say that $f$ is smooth at $(x,\xi)$ if it is $H^{s_0}$ at $(x,\xi)$ for any $s_0\in\R$.\\
When we say that $u$ is $H^{s_0}$ at $(t,y,\eta)$, we mean that $u(t)$ is $H^{s_0}$ at $(y,\eta)\in T^*\Omega$.
\end{definition}
\begin{lemma} \label{e:propagregu}
Let $V,V'$ be sufficiently small open conic neighborhoods of $(x_0,\xi_0)$ such that $\overline{V}\subset V'$. Let $u$ be a solution of \eqref{e:subwave}. If $u$ and $\partial_t u$ are smooth in $\mathscr{S}((-T-\delta_0,-T+\delta_0); V')$, then $u$ is smooth in
$$
U=\mathscr{S}((-T+\delta_0,0); V).
$$ 
\end{lemma}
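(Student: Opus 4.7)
The plan is to use the a priori estimates \eqref{e:propsing1}--\eqref{e:propsing2} as a bootstrap mechanism: by applying them with time functions of increasing degree of homogeneity on a slightly shrinking family of conic neighborhoods, I will gain $s$ Sobolev derivatives at each iteration, and iterate until unlimited microlocal smoothness is achieved in $U$. Concretely, I will fix a nested family $V = V_\infty \subset \ldots \subset V_{k+1} \subset V_k \subset \ldots \subset V_0 \subset V'$ of open conic neighborhoods of $(x_0,\xi_0)$ with $\overline{V_{k+1}} \subset V_k$. For each $k$ I will construct via Lemma \ref{l:cstrtime} a time function $\Phi_k$ of homogeneity degree $\alpha_k = 2ks$ with conic parameters $(V_{k+1}, V_k)$, yielding symbols $\Psi_0^{(k)}, \Psi_1^{(k)}$ of degree $ks$. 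The inductive claim at step $k$ is that $u$ is $H^{ks}$ and $\partial_t u$ is $H^{(k-1)s}$ microlocally on $\mathscr{S}((-T+\delta_0,0); V_k)$; the base case $k=0$ uses only that $u \in C^0(\R; \mathscr{D}(A^{1/2})) \cap C^1(\R; L^2)$.

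For the inductive step at level $k$, I will analyse the three terms on the right-hand side of \eqref{e:propsing}. The remainder $(R_{\alpha_k} u, u) = \delta (A \Op(\Psi_1^{(k)}) u, \Op(\Psi_1^{(k)}) u) + \delta \|D_t \Op(\Psi_1^{(k)}) u\|_{L^2}^2$ is finite because $\Psi_1^{(k)}$ is supported in $\mathscr{S}((-T-\delta_0, -T+\delta_0); V')$, where $u$ and $\partial_t u$ are smooth by hypothesis. The term $\|C_{\alpha_k} u\|_{L^2}^2$ involves an operator of order $ks$ whose essential support lies inside $\mathscr{S}((-T-\delta_0,\delta_0); V_k)$, and there $u$ has the required $H^{ks}$ regularity by the inductive hypothesis (plus the hypothesis for the portion near $t=-T$). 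Finally, $\|\Op(\Psi_0^{(k)}) u\|_{L^2}^2$ is controlled for the same reason. The left-hand side of \eqref{e:propsing1} then yields $(-\Delta)^{s/2}\Op(\Psi_0^{(k)}) u \in L^2$, and Property (7) of Lemma \ref{l:cstrtime}, which ensures $\Psi_0^{(k)} > 0$ on $\mathscr{S}((-T+\delta_0,0); V_{k+1})$, combined with standard elliptic microlocal arguments, promotes this into $u \in H^{(k+1)s}$ on that region. The companion estimate \eqref{e:propsing2} treats $\partial_t u$ analogously, closing the induction.

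The main obstacle will be that \eqref{e:propsing} was derived under the assumption that $u$ is smooth, via the identity $2 \text{Im}(Pu, \Op(\Phi) D_t u) = (Cu, u)$ and the elimination of the terms $(R'Pu,u) + (PR' u, u)$ using $Pu=0$. For a distributional solution the computation must be justified by regularization: I will introduce a smoothing family $J_\varepsilon$ in the $x$-variable (for instance $J_\varepsilon = (\Id - \varepsilon \Delta_X)^{-N}$) commuting with $D_t$, apply the integration by parts to $u_\varepsilon := J_\varepsilon u$, which is smooth and satisfies $P u_\varepsilon = [A, J_\varepsilon] u$, and pass to the limit $\varepsilon \to 0$. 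The delicate point is to control the extra contribution $2 \text{Im}([A, J_\varepsilon] u, \Op(\Phi) D_t u_\varepsilon)$ uniformly, showing that it vanishes in the limit in a topology strong enough that all of the Sobolev bounds on the right-hand side of \eqref{e:propsing} survive the passage to the limit; this relies on standard commutator estimates in pseudodifferential calculus. Once the regularization is in place, \eqref{e:propsing} holds for our $u$, and the induction described above carries through, yielding smoothness of $u$ on $U$.
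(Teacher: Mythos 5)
Your proposal is correct and follows essentially the same route as the paper: a bootstrap on the estimates \eqref{e:propsing1}--\eqref{e:propsing2} with time functions of increasing homogeneity from Lemma \ref{l:cstrtime}, finiteness of the right-hand side terms guaranteed by the support properties of $\Psi_0,\Psi_1$ and the previous step, and a regularization to justify the commutator identity for the non-smooth solution. The only differences are implementation details: you shrink a nested family of cones $V_k$ where the paper keeps one pair $(V,V')$ and invokes essential-support containment, and you smooth with $(\Id-\e\Delta_X)^{-N}$ in $x$ and track the commutator $[A,J_\e]u$ explicitly, whereas the paper mollifies by convolution in $(t,x)$ and passes to the limit directly.
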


\begin{proof}[Proof of Lemma \ref{e:propagregu}]
We set $u_\e=\rho_\e * u$ where $\rho_\e=\e^{-(d+1)}\rho(\cdot/\e)$ and $\rho\in C_c^\infty(\R^{d+1})$ is of integral $1$ (and depends on the variables $t,x$). Recall that $d$ is the dimension of $\Omega$.

\smallskip

Applying Lemma \ref{l:cstrtime} for any $\alpha\geq 0$ yields a function $\Phi_\alpha$ which is in particular homogeneous of degree $\alpha$ in $\xi$; its derivative in $t$ can be written $\Phi_\alpha'=(\Psi_{1}^\alpha)^2-(\Psi_{0}^\alpha)^2$ (the upper index being not an exponent). Then we apply \eqref{e:propsing1} to $u_\e$ and with $\alpha=0$: we get
\begin{equation} \label{e:equwitheps}
\|(-\Delta)^{s/2} \Op(\Psi_{0}^0)  u_\e\|_{L^2}^2\leq c_0(\|C_0u_\e\|_{L^2}^2+\|u_\e\|_{L^2}^2+(R_0u_\e,u_\e)+\|\Op(\Psi_{0}^0)  u_\e\|_{L^2}^2)
\end{equation}
where $R_0=\delta\Op(\Psi_{1}^0)(D_t^2+A)\Op(\Psi_{1}^0)$ (see Proposition \ref{p:decompoC}) and $c>0$ does not depend on $\e$.
All quantities
$$
\|C_0u\|_{L^2},\quad\|u\|_{L^2}, \quad (R_0u,u), \quad \|\Op(\Psi^0_0)  u\|_{L^2}^2
$$
are finite. Therefore, taking the limit $\e\rightarrow 0$ in \eqref{e:equwitheps}, we obtain $u\in H^{2s}$ in $U$. Using the family of inequalities \eqref{e:propsing1}, we can iterate this argument: first with $\alpha=2s$, then with $\alpha=4s, 6s$, etc, and each time we replace $\Psi_0^0$, $R_0$, $C_0$ by $\Psi_0^\alpha$, $R_\alpha$, $C_\alpha$. At step $k$, we deduce thanks to \eqref{e:propsing1} that $u\in H^{2ks}$. In particular, we use the fact that $\|C_\alpha u\|_{L^2}$ and $\|\Op(\Psi_0^\alpha)u\|_{L^2}$ are finite, which comes from the previous step of iteration since $C_\alpha$ is essentially supported close to the essential support of $C'$ (which is contained in the essential support of $\Phi$ thanks to \eqref{e:C'}). Thus, $u\in\bigcap_{k\in\N} H^{2ks}= C^\infty$ in $U$. 

\smallskip

Then, using \eqref{e:propsing2} for any $\alpha\in\N$ with $\Psi_0^{\alpha}$ in place of $\Psi_0$, we obtain that $D_tu$ is also $H^{\alpha}$ in $U$. Since this is true for any $\alpha\in\N$, $D_tu$ is $C^\infty$ in $U$, which concludes the proof of Lemma \ref{e:propagregu}.
\end{proof}

We conclude the proof of Theorem \ref{t:singprop}. We assume that 
\begin{equation}\label{e:smoothdebut}
\text{$u$ is smooth in $W=\mathscr{S}((-T-\delta_0,-T+\delta_0);(x_0,\xi_0))$.}
\end{equation}
Then, $u$ is smooth in a slightly larger set $W'$, i.e., such that $\overline{W}\subset W'$. By Lemma \ref{l:Sinnercontinuous}, there exists $V'\subset T^*\Omega\setminus 0$ an open neighborhood of $(x_0,\xi_0)$ such that 
$$
W\subset\mathscr{S}((-T-\delta_0,-T+\delta_0); V')\subset W'.
$$
Fix also an open set $V\subset T^*\Omega\setminus 0$ such that
$$
(x_0,\xi_0)\in V\subset \overline{V}\subset V'.
$$
Lemma \ref{e:propagregu} implies that $u$ is smooth in $\mathscr{S}((-T+\delta_0,0); V)$. In particular, 
\begin{equation}\label{e:smoothend}
\text{$u$ is smooth in $\mathscr{S}((-T+\delta_0,0); (x_0,\xi_0))$.}
\end{equation}

\smallskip

The fact that \eqref{e:smoothdebut} implies \eqref{e:smoothend} proves that singularities of \eqref{e:subwave} propagate only along \emph{rays}. Using that singularities of $P$ are contained in $\{p=0\}$, we obtain finally Theorem \ref{t:singprop}.

\section{Proof of Corollary \ref{c:singpropag}} \label{s:xyproof}
In all the sequel, that is, in Sections \ref{s:xyproof} and \ref{s:th1}, we assume that $A$ is a sub-Laplacian. As mentioned in Definition \ref{e:sL} (see also \cite{Hor67}), it means that we assume that $A$ has the form
\begin{equation} \label{e:sumofsquare}
A=\sum_{i=1}^K Y_i^*Y_i
\end{equation}
where the global smooth vector fields $Y_i$ satisfy Hörmander's condition. 

\smallskip

The principal symbol of $A$, which is also the natural Hamiltonian, is
\begin{equation*}
a=\sum_{i=1}^{K} h_{Y_{i}}^2.
\end{equation*}
Here, for $Y$ a vector field on $X$, we denoted by $h_{Y}$ the momentum map given in canonical coordinates $(x,\xi)$ by $h_{Y}(x,\xi)=\xi(Y(x))$. 

\smallskip

In this Section, we prove Corollary \ref{c:singpropag}. For that purpose, we introduce in Section \ref{s:sR} notations and concepts from sub-Riemannian geometry, which is a natural framework to study geometric properties of the vector fields $Y_1,\ldots,Y_K$. Our presentation is inspired by \cite[Chapter 5 and Appendix D]{Mon02} (see also \cite{ABB19}).

\subsection{Sub-Riemannian geometry and horizontal curves} \label{s:sR}
We consider the sub-Riemannian distribution
$$\mathcal{D}=\text{Span}(Y_{1},\ldots,Y_{K})\subset TX.
$$
There is a metric $g$, namely
\begin{equation} \label{e:metric}
g_x(v)=\inf\left\{\sum_{i=1}^{K} u_i^2 \ | \ v=\sum_{i=1}^{K} u_iY_{i}(x) \right\},
\end{equation}
which is Riemannian on $\mathcal{D}$ and equal to $+\infty$ outside $\mathcal{D}$. The triple $(X,\mathcal{D},g)$ is called a sub-Riemannian structure (see \cite{Mon02}). 

\smallskip

Fix an interval $I=[b,c]$ and a point $x_0\in X$. We denote by $\Omega(I,x_0;\mathcal{D})$ the space of all absolutely continuous curves $\gamma:I\rightarrow X$ that start at $\gamma(b)=x_0$ and whose derivative is square integrable with respect to $g$, implying that the length
\begin{equation*}
\int_I \sqrt{g_{\gamma(t)}(\dot{\gamma}(t))}dt
\end{equation*}
of $\gamma$ is finite. Such a curve $\gamma$ is called \emph{horizontal}. The \emph{endpoint map} is the map 
\begin{equation*}
\End:\Omega(I,x_0;\mathcal{D})\rightarrow X, \quad \gamma\mapsto\gamma(c).
\end{equation*}
The metric \eqref{e:metric} induces a distance $d$ on $X$, and $d(x,y)<+\infty$ for any $x,y\in X$ thanks to Hörmander's condition (this is the Chow-Rashevskii theorem).

\smallskip

Two types of curves in $\Omega(I,x_0;\mathcal{D})$ will be of particular interest: the critical points of the endpoint map, and the curves which are projections of the Hamiltonian vector field $H_a$ associated to $a$.

\smallskip

Projections of integral curves of $H_a$ are geodesics:
\begin{theorem} \cite[Theorem 1.14]{Mon02} 
Let $\gamma(s)$ be the projection on $X$ of an integral curve (in $T^*X$) of the Hamiltonian vector field $H_a$. Then $\gamma$ is a horizontal curve and every sufficiently short arc of $\gamma$ is a minimizing sub-Riemannian geodesic (i.e., a minimizing path between its endpoints in the metric space $(X,d)$).
\end{theorem}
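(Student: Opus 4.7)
The plan is to treat horizontality and local minimality separately. For horizontality, I would read the result directly off Hamilton's equations. Since $h_{Y_i}(x,\xi) = \xi(Y_i(x))$ satisfies $\partial_\xi h_{Y_i} = Y_i(x)$, the Hamilton equation $\dot{x} = \partial_\xi a$ with $a = \sum h_{Y_i}^2$ gives
$$
\dot{x}(s) \;=\; 2 \sum_{i=1}^K h_{Y_i}(x(s),\xi(s))\, Y_i(x(s)) \;\in\; \mathcal{D},
$$
which exhibits $\gamma$ as horizontal with the explicit admissible controls $u_i(s) = 2 h_{Y_i}(x(s),\xi(s))$. Moreover the vector $h = (h_{Y_1},\ldots,h_{Y_K}) \in \R^K$ automatically lies in $(\ker M_x)^\perp$, where $M_x(u) = \sum u_i Y_i(x)$, because $u \in \ker M_x$ forces $\langle h, u\rangle = \xi(\sum u_i Y_i) = 0$. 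Hence these are the minimizing controls in \eqref{e:metric}, so $g_{x(s)}(\dot x(s)) = 4\, a(x(s),\xi(s))$, a quantity conserved along the Hamiltonian flow. Consequently $\gamma$ has constant sub-Riemannian speed $2\sqrt{a(x_0,\xi_0)}$; the same orthogonality observation gives the duality identity $g^*(\xi) = a(x,\xi)$.

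For local minimality I would use a calibration (field-of-extremals) argument driven by Hamilton--Jacobi. Assume $E := a(x_0,\xi_0) > 0$ (otherwise $\gamma$ is constant and there is nothing to prove). The plan is to build, on a neighborhood of a short subarc $\gamma|_{[b,c]}$, a smooth function $S$ solving $a(x,dS) = E$ with $\xi(s) = dS(x(s))$ along $\gamma$. For any competing horizontal curve $\tilde\gamma$ joining the same endpoints on $[b,c]$, pointwise Cauchy--Schwarz between $g^* = a$ and $g$ yields
$$
dS(\dot{\tilde\gamma}) \;\leq\; \sqrt{a(\tilde\gamma,dS)}\,\sqrt{g(\dot{\tilde\gamma})} \;=\; \sqrt E \,\sqrt{g(\dot{\tilde\gamma})},
$$
which integrates to $S(\tilde\gamma(c)) - S(\tilde\gamma(b)) \leq \sqrt E \,\ell(\tilde\gamma)$. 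Along $\gamma$ itself the inequality saturates, since $dS(\dot\gamma) = \xi(\dot\gamma) = 2a = 2E$ while $\ell(\gamma) = 2\sqrt E(c-b)$, so $S(\gamma(c))-S(\gamma(b)) = \sqrt E \,\ell(\gamma)$. Because the left hand side depends only on the shared endpoints, one concludes $\ell(\gamma) \leq \ell(\tilde\gamma)$.

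To produce $S$ I would use a flow-out. Pick a codimension-one submanifold $\Sigma \subset \{a = E\} \cap T^*_{x_0} X$ transverse to $H_a$ at $\xi_0$ inside $\{a = E\}$; this transversality is available because the base projection of $H_a(x_0,\xi_0)$ equals $\dot x(0) = 2\sum h_{Y_i} Y_i \neq 0$ when $E > 0$. For $s$ in a sufficiently small time window the map
$$
\Phi(s,\eta) \;=\; \pi\bigl(e^{s H_a}(x_0,\eta)\bigr), \qquad \eta \in \Sigma,
$$
is a local diffeomorphism from $(s,\eta)$-space onto a neighborhood of the short subarc, and pulling the tautological form $\xi\,dx$ back through this foliation yields a closed 1-form whose primitive $S$ is the desired HJ solution with $dS(\gamma(s)) = \xi(s)$. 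The main obstacle is precisely this nondegeneracy of the flow-out: it genuinely exploits $E > 0$, i.e.\ that we sit at a regular point of the energy surface; in the abnormal regime $\xi_0 |_\mathcal{D} = 0$ (which is at the heart of the rest of this paper) the construction collapses. For $E > 0$ however, any conjugate point along the flow-out is pushed beyond a uniform positive time, so short enough subarcs always lie before it and the calibration argument closes.
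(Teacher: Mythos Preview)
The paper does not give its own proof of this statement; it is quoted from \cite[Theorem~1.14]{Mon02} as background for Section~\ref{s:sR} and used without argument. Your overall strategy---horizontality read off Hamilton's equations, then local minimality via a Hamilton--Jacobi calibration---is precisely the method in that reference, and your horizontality paragraph and the calibration inequality (once $S$ is available) are correct.

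The gap is in the construction of $S$. You take $\Sigma$ inside the single fibre $T^*_{x_0}X$ and set $\Phi(s,\eta)=\pi\bigl(e^{sH_a}(x_0,\eta)\bigr)$. Whatever dimension you intend for $\Sigma$, the base projection $\pi$ collapses the whole fibre to the point $x_0$, so $\partial_\eta\Phi(0,\cdot)=0$ and $d\Phi_{(0,\xi_0)}$ has rank at most one. The transversality you check---that $H_a$ is not tangent to $\Sigma$---only shows that the flow-out immerses into $T^*X$; it says nothing about the projection to $X$. Thus $\Phi$ is not a local diffeomorphism near $s=0$, $S$ is undefined at $\gamma(0)=x_0$, and the calibration inequality cannot be evaluated at that endpoint.

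The standard remedy, which is how \cite{Mon02} proceeds, is to seed the flow-out over a hypersurface in the base rather than in a fibre. Take $N\subset X$ through $\gamma(b)$ transverse to $\dot\gamma(b)$, set $S|_N\equiv 0$, and lift $N$ to $\widetilde N\subset\{a=E\}$ by choosing at each $y\in N$ the conormal covector $\xi_y$ with $a(y,\xi_y)=E$ and the sign matching $\xi(b)$; this is well posed because transversality of $N$ to the horizontal vector $\dot\gamma(b)$ forces the conormal direction out of $\mathcal D^\perp$, so $a(y,\cdot)$ is nondegenerate along it. The flow-out $L=\bigcup_s e^{sH_a}(\widetilde N)$ is Lagrangian and, crucially, $\pi|_L$ is already a local diffeomorphism at $s=0$, since its differential sends $T\widetilde N$ isomorphically onto $TN$ and $H_a$ to $\dot\gamma(b)\notin TN$. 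Then $S$ is smooth on a full neighbourhood of the short arc and your calibration paragraph goes through unchanged.
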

\noindent Such horizontal curves $\gamma$ are called \textit{normal} geodesics, and they are smooth.

\noindent The differentiable structure on $\Omega(I,x_0;\mathcal{D})$ described in \cite[Chapter 5 and Appendix D]{Mon02} allows to give a sense to the following notion: 
\begin{definition}\label{d:singular}
A singular curve is a critical point for the endpoint map.
\end{definition}
\noindent Note that in Riemannian geometry (i.e., for $a$ elliptic), there exist no singular curves. 

\noindent In the next definition, we use the notation $\mathcal{D}^\perp$ for the annihilator of $\mathcal{D}$ (thus a subset of the cotangent bundle $T^*X$),  and $\overline{\omega}_X$ denotes the restriction to $\mathcal{D}^\perp$ of the canonical symplectic form $\omega_{X}$ on $T^*X$.
\begin{definition} \label{d:charac}
A characteristic for $\mathcal{D}^{\perp}$ is an absolutely continuous curve $\lambda(t)\in\mathcal{D}^\perp$ that never intersects the zero section of $\mathcal{D}^\perp$ and that satisfies $\dot{\lambda}(t)\in{\rm ker}(\overline{\omega}_X(\lambda(t)))$ at every point $t$ for which the derivative $\dot{\lambda}(t)$ exists.
\end{definition}
\begin{theorem}\cite[Theorem 5.3]{Mon02} \label{t:singprojchar}
A curve $\gamma\in\Omega$ is singular if and only if it is the projection of a characteristic $\lambda$ for $\mathcal{D}^\perp$ with square-integrable derivative. $\lambda$ is then called an abnormal extremal lift of the singular curve $\gamma$.
\end{theorem}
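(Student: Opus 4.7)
The plan is the classical Lagrange-multiplier / Pontryagin approach, identifying singular curves with abnormal cotangent lifts. First I would parametrize $\Omega(I,x_0;\mathcal{D})$ by $L^2$ controls: for $u=(u_1,\dots,u_K)\in L^2(I,\R^K)$, let $\gamma_u$ denote the Carath\'eodory solution of $\dot\gamma=\sum_i u_i(t)Y_i(\gamma)$ with $\gamma_u(b)=x_0$. Every horizontal curve with square-integrable derivative arises this way (non-uniquely, if the $Y_i$ are not independent), and the endpoint map factors through the smooth map $F:u\mapsto\gamma_u(c)$; the Banach-manifold structure on $\Omega$ from \cite[Ch.~5 and App.~D]{Mon02} lets us identify critical points of $\End$ with critical points of $F$.

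Next I would compute $dF_u$ by linearizing the control ODE along $\gamma_u$. Denoting by $\Phi_{s,c}:T_{\gamma_u(s)}X\to T_{\gamma_u(c)}X$ the differential of the flow from time $s$ to time $c$, variation of constants gives
\[
dF_u(v)=\int_b^c \Phi_{s,c}\!\left(\sum_{i=1}^K v_i(s)\,Y_i(\gamma_u(s))\right)ds.
\]
By Definition \ref{d:singular}, $\gamma_u$ is singular iff there exists $\xi_c\in T^*_{\gamma_u(c)}X\setminus\{0\}$ annihilating the range of $dF_u$, which by localization in $v$ is equivalent to $\bigl\langle\xi_c,\Phi_{s,c}(Y_i(\gamma_u(s)))\bigr\rangle=0$ for every $i$ and a.e.\ $s$. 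Setting $\lambda(s):=(\Phi_{s,c})^*\xi_c\in T^*_{\gamma_u(s)}X$, this reads $h_{Y_i}(\lambda(s))=0$, i.e.\ $\lambda(s)\in\mathcal{D}^\perp_{\gamma_u(s)}$; continuity of $\lambda$ upgrades the condition from a.e.\ $s$ to every $s$, and $\lambda(s)\neq 0$ since $\xi_c\neq 0$.

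To finish the forward direction, I would identify $\lambda$ as the cotangent lift of the flow of $X^u(t,\cdot):=\sum_i u_i(t)Y_i$, i.e.\ as the Hamiltonian flow of the time-dependent Hamiltonian $H^u(t,x,p):=\sum_i u_i(t)h_{Y_i}(x,p)$. Thus $\dot\lambda(t)=H_{H^u(t,\cdot)}(\lambda(t))$ a.e.\ For any $w\in T_{\lambda(t)}\mathcal{D}^\perp$ one has $dh_{Y_i}(w)=0$ (since $\mathcal{D}^\perp$ is locally cut out by the $h_{Y_i}$ where $\lambda\neq 0$); using the sign convention $\omega_X(H_f,\cdot)=-df$ we obtain
\[
\omega_X(\dot\lambda(t),w)=\sum_i u_i(t)\,\omega_X(H_{h_{Y_i}},w)=-\sum_i u_i(t)\,dh_{Y_i}(w)=0,
\]
so $\dot\lambda(t)\in\ker\overline{\omega}_X(\lambda(t))$ and $\lambda$ is a characteristic. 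The converse is the same argument read backwards: given a characteristic $\lambda$, extract $L^2$ controls from $\dot\pi(\lambda)=\sum_i u_i\,Y_i(\pi(\lambda))$, use $\dot\lambda\in\ker\overline{\omega}_X$ together with the annihilation of $\mathcal{D}^\perp$ by the $h_{Y_i}$ to verify the Hamiltonian equations for $H^u$, and recognize $\xi_c:=\lambda(c)\neq 0$ as a nonzero annihilator of the range of $dF_u$.

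The main technical obstacle is the low regularity: controls are only $L^2$, so the forward flow, its linearization, and the backward cotangent transport must be handled as Carath\'eodory ODEs, the pointwise conditions $h_{Y_i}(\lambda(s))=0$ hold only a.e.\ until continuity of $\lambda$ is invoked, and the characteristic equation for $\dot\lambda$ is valid only almost everywhere. Additional care is needed where the $Y_i$ become linearly dependent, but $\mathcal{D}^\perp$ is still a well-defined smooth subbundle (the annihilator of $\mathcal{D}$) and the argument goes through provided one works modulo the non-injectivity of $u\mapsto\gamma_u$.
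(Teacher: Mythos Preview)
The paper does not give its own proof of this statement: Theorem \ref{t:singprojchar} is quoted verbatim from \cite[Theorem 5.3]{Mon02} and used as background for Section \ref{s:sR}, with no argument supplied. So there is nothing in the paper to compare your proposal against.

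That said, your sketch is the standard Lagrange-multiplier/Pontryagin argument and is essentially the proof given in Montgomery's book: parametrize horizontal curves by $L^2$ controls, compute the differential of the endpoint map by variation of constants, dualize the failure of surjectivity into a nonzero covector $\xi_c$, transport it backwards along the flow to obtain $\lambda$, and then verify the characteristic condition via the Hamiltonian lift. Your identification of $\dot\lambda$ with $H_{H^u}$ and the computation $\omega_X(\dot\lambda,w)=-\sum_i u_i(t)\,dh_{Y_i}(w)=0$ for $w\in T_{\lambda(t)}\mathcal{D}^\perp$ is correct under the paper's sign convention. The caveat you flag about points where the $Y_i$ are linearly dependent is genuine: in this paper $\mathcal{D}=\mathrm{Span}(Y_1,\dots,Y_K)$ need not have constant rank (see Definition \ref{e:sL}), so $\mathcal{D}^\perp$ need not be a smooth subbundle and the phrase ``locally cut out by the $h_{Y_i}$'' requires the constant-rank assumption that Montgomery makes but the present paper does not. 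This is a limitation of the cited theorem rather than of your argument.
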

Normal geodesics and singular curves are particularly important in sub-Riemannian geometry because of the following fact (Pontryagin's maximum principle):
\begin{center}
 any minimizing geodesic in $(X,d)$ is either a singular curve or a normal geodesic.
\end{center}
The existence of minimizing geodesics which are singular curves but not normal geodesics was proved in \cite{Mon94}.

\smallskip

Let us mention three examples which are well-known in sub-Riemannian geometry (see \cite{Mon02} and \cite{ABB19}) where the singular curves and their abnormal extremal lifts can be explicitly computed. These examples are presented in $\R^n$ ($n=3,4$) for simplicity, but they could also have been written on adequate compact manifolds in order to fit better with the framework of the present work.
\begin{exa} \label{e:Heis}
When the vector fields are $Y_1=\partial_x-\frac{y}{2}\partial_z$ and $Y_2=\partial_y+\frac{x}{2}\partial_z$ in $\R^3$, and the measure is $\nu=dxdydz$, then 
$$A=-\left[\left(\partial_x-\frac{y}{2}\partial_z\right)^2+\left(\partial_y+\frac{x}{2}\partial_z\right)^2\right]$$ is the Heisenberg sub-Laplacian. In this case, the only singular curves are the trivial constant ones $t\mapsto q_0$ for any $q_0=(x_0,y_0,z_0)\in \R^3$, and the abnormal extremal lifts are given by $(q_0,p_0)\in T^*\R^3$ where $p_0\neq 0$ annihilates $Y_1(q_0)$ and $Y_2(q_0)$ and is thus proportional to $dz+\frac{y_0}{2}dx-\frac{x_0}{2}dy$.
\end{exa}
\begin{exa} \label{e:Martinet}
When the vector fields are $Y_1=\partial_x$ and $Y_2=\partial_y+x^2\partial_z$ in $\R^3$, and the measure is $\nu=dxdydz$, then 
$$A=-\left[\partial_x^2+\left(\partial_y+x^2\partial_z\right)^2\right]$$ is the Martinet sub-Laplacian. Apart from the trivial constant ones, the singular curves are of the form $t\mapsto q(t)=(0,t,z_0)$ for any $z_0\in \R$ and the abnormal extremal lifts are given by $(q(t),p(t))\in T^*\R^3$ where $p(t)\neq 0$ annihilates $Y_1(q(t))=\partial_x$ and $Y_2(q(t))=\partial_y$. Note that many other parametrizations of these curves are possible.
\end{exa}
\begin{exa}
When the vector fields are $Y_1=\partial_x-\frac{y}{2}\partial_z$, $Y_2=\partial_y+\frac{x}{2}\partial_z$ and $Y_3=\partial_w$ in $\R^4$, and the measure is $\nu=dxdydzdw$, then 
$$A=-\left[\left(\partial_x-\frac{y}{2}\partial_z\right)^2+\left(\partial_y+\frac{x}{2}\partial_z\right)^2+\partial_w^2\right]$$ is the quasi-contact (or even contact) sub-Laplacian.  Apart from the trivial constant ones, the singular curves are of the form $t\mapsto q(t)=(x_0,y_0,z_0,t)$ for any $x_0,y_0,z_0\in \R$, and their abnormal extremal lifts are given by $(q(t),p(t))\in T^*\R^4$ where $p(t)\neq 0$ annihilates $Y_1(q(t)), Y_2(q(t))$ and $Y_3(q(t))$, and is thus proportional to $dz+\frac{y_0}{2}dx-\frac{x_0}{2}dy$. Again, many other parametrizations of these curves are possible.
\end{exa}

\subsection{End of the proof of Corollary \ref{c:singpropag}} \label{s:proofcorollary}

The basic facts of sub-Riemannian geometry recalled in Section \ref{s:sR} now allow us to deduce Corollary \ref{c:singpropag} from Theorem \ref{t:singprop}.

\smallskip

We recall a few notations already used: $\pi$ denotes the canonical projection $\pi:T^*X\rightarrow X$, and $\mathcal{I}$ is the canonical isomorphism between $T(T^*X)$ and $T^*(T^*X)$ introduced in \eqref{e:Iisom}. The notation $a_{m}$ stands for half the Hessian of the principal symbol of $A$ at $m$, and $a_m^*$ is then defined as in \eqref{e:S*}.

\smallskip

The expression \eqref{e:gammam2} of the cones $\Gamma_m$ can be simplified thanks to the following lemma.
\begin{lemma} \label{l:eqamg}
When $A$ is a sub-Laplacian, there holds $a_{m}^{*}(\mathcal{I}(b ))= g(d\pi(b ))$ for any $b \in({\rm ker}(a_{m}))^{\perp_{\omega_X}}\subset T(T^*X)$.
\end{lemma}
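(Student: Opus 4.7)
The strategy is to reduce both $a_m^*(\mathcal{I}(b))$ and $g(d\pi(b))$ to constrained infima in $\R^K$ using the sum-of-squares structure $a=\sum_{i=1}^K h_{Y_i}^2$, and then match them. Since $m\in\Sigma_{(2)}$ forces $a(m)=0$, and each summand $h_{Y_i}^2$ is nonnegative, we have $h_{Y_i}(m)=0$ for every $i$. Taylor-expanding, the half-Hessian is
\[
a_m(v)=\sum_{i=1}^K\ell_i(v)^2,\qquad \ell_i:=dh_{Y_i}(m)\in T_m^*(T^*X).
\]
This explicit decomposition is the key starting point.

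Using the symplectic identity $\omega_X(H_{h_{Y_i}}(m),\cdot)=-\ell_i$, one has $\ker(a_m)=\bigcap_i\ker(\ell_i)=\bigl(\mathrm{span}\{H_{h_{Y_i}}(m)\}\bigr)^{\perp_{\omega_X}}$, and by involutivity of symplectic orthogonality on subspaces, $(\ker a_m)^{\perp_{\omega_X}}=\mathrm{span}\{H_{h_{Y_i}}(m)\}$. Every $b$ in the lemma's hypothesis therefore decomposes as $b=\sum_i w_i H_{h_{Y_i}}(m)$ for some $w\in\R^K$; since the horizontal component of $H_{h_{Y_i}}(m)$ is $\partial_\xi h_{Y_i}(m)=Y_i(x)$, any such $w$ automatically satisfies $d\pi(b)=\sum_i w_i Y_i(x)$.

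I would then invoke Proposition~\ref{p:polar} applied to the sum-of-squares form $a_m=\sum\ell_i^2$: it identifies $a_m^*(\eta')=\min\{|w|^2:\sum_i w_i\ell_i=\eta'\}$ for $\eta'$ in the span of the $\ell_i$'s. Since $\mathcal{I}(b)=\omega_X(b,\cdot)=-\sum_i w_i\ell_i$ whenever $b=\sum_i w_i H_{h_{Y_i}}(m)$, one obtains the two parallel expressions
\[
a_m^*(\mathcal{I}(b))=\min\bigl\{|w|^2:b=\textstyle\sum_i w_i H_{h_{Y_i}}(m)\bigr\},\quad g(d\pi(b))=\min\bigl\{|u|^2:d\pi(b)=\textstyle\sum_i u_i Y_i(x)\bigr\}.
\]
The inequality $a_m^*(\mathcal{I}(b))\geq g(d\pi(b))$ follows immediately, since every $w$ feasible on the left is a feasible $u$ on the right with the same norm.

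The main obstacle is the reverse inequality: one must lift a minimizer $u^\star$ of the $g$-problem to a vector $w$ realizing $b=\sum_i w_i H_{h_{Y_i}}(m)$ with $|w|^2\leq |u^\star|^2$. I would handle this by matching the kernels of the two linear maps $u\mapsto\sum u_i Y_i(x)$ and $w\mapsto\sum w_i H_{h_{Y_i}}(m)$, showing that any relation $\sum v_i Y_i(x)=0$ among the $Y_i$ at $x$ lifts (using that $b$ is already in $\mathrm{span}\{H_{h_{Y_i}}(m)\}$ and that the $\partial_\xi$-parts of the $H_{h_{Y_i}}$ are the $Y_i$ themselves) to a relation $\sum v_i H_{h_{Y_i}}(m)=0$. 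Once the two affine feasible sets are identified as translates of a common linear subspace, their minimum-norm elements coincide, yielding $a_m^*(\mathcal{I}(b))=g(d\pi(b))$.
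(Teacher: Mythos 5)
Your reduction of both sides to constrained minima is sound as far as it goes: since $h_{Y_i}(m)=0$ for every $i$, the half-Hessian is indeed $a_m=\sum_i\ell_i^2$ with $\ell_i=dh_{Y_i}(m)$; the identity $(\ker a_m)^{\perp_{\omega_X}}=\mathrm{span}\{H_{h_{Y_i}}(m)\}$ is correct; and the formula $a_m^*(\mathcal{I}(b))=\min\{|w|^2:\ b=\sum_i w_iH_{h_{Y_i}}(m)\}$ is true --- although it is not what Proposition \ref{p:polar} says: that proposition only computes a polar cone, and the identification of the supremum \eqref{e:S*} with a minimal-norm representation needs its own short projection/Cauchy--Schwarz argument. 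Granting it, the inequality $g(d\pi(b))\leq a_m^*(\mathcal{I}(b))$ follows exactly as you say, since any $w$ with $b=\sum_i w_iH_{h_{Y_i}}(m)$ gives $d\pi(b)=\sum_i w_iY_i(x)$.

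The reverse inequality is where the proof breaks, and the kernel-matching claim you propose is false. In canonical coordinates $H_{h_{Y_i}}(m)$ has horizontal part $Y_i(x)$ and vertical part $-\,{}^t(\partial_xY_i(x))\,\xi$, so a pointwise relation $\sum_i v_iY_i(x)=0$ does not force $\sum_i v_iH_{h_{Y_i}}(m)=0$. Concretely, take the Grushin fields $Y_1=\partial_x$, $Y_2=x\partial_y$ on $\R^2$ and the point $x=0$, $y=y_0$, $\xi=0$, $\eta=\eta_0\neq 0$ (so $a=0$, $da=0$): then $Y_2$ vanishes at the base point while $H_{h_{Y_2}}(m)=-\eta_0\partial_\xi\neq 0$, so the two kernels differ and the two feasible sets are not translates of a common subspace; one only has the inclusion giving the easy inequality. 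Worse, the gap cannot be repaired by a cleverer lifting: for $b=H_{h_{Y_1}}(m)+c\,H_{h_{Y_2}}(m)=\partial_x-c\eta_0\partial_\xi$ one computes $a_m^*(\mathcal{I}(b))=1+c^2$ while $g(d\pi(b))=g(\partial_x)=1$, so at such a rank-dropping point the two minima genuinely disagree. This degeneracy is exactly what the paper's proof sidesteps: it replaces the (possibly redundant) generators $Y_i$ by a frame $Z_1,\dots,Z_N$ which is $g$-orthonormal at the base point and for which $a_m=\sum_j(dh_{Z_j}(m))^2$ with the $dh_{Z_j}(m)$ linearly independent; then $b$ has a \emph{unique} representation $b=\sum_j u_jH_{h_{Z_j}}(m)$, the supremum \eqref{e:S*} is computed directly to be $|u|^2$, and $g$-orthonormality gives $g(\sum_j u_jZ_j)=|u|^2$, so no kernel issue arises. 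The missing ingredient in your argument is precisely this passage to an independent $g$-orthonormal generating family whose squared momentum differentials reproduce $a_m$ (which requires the rank of $\mathrm{span}\{Y_i(x)\}$ not to drop at the base point, as the Grushin example shows); it is a genuinely geometric input, not a bookkeeping step about kernels of the two parametrizing maps.
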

\begin{proof} 
We consider a frame $Z_1,\ldots,Z_N$ which is $g$-orthonormal at the (canonical) projection of $m$ on $X$. In particular, the $Z_j$ are independent, and the $H_{h_{Z_j}}$ are also independent.
We have $a_{m}=\sum_{j=1}^N (dh_{Z_j})^2$. 
Hence, $H_{h_{Z_1}},\ldots,H_{h_{Z_N}}$ span $({\rm ker}(a_{m}))^{\perp_{\omega_X}}$ since
\begin{align*}
{\rm ker}(a_{m})&=\bigcap_{j=1}^N {\rm ker}(dh_{Z_j})=\{\xi\in T(T^*X), \ dh_{Z_j}(\xi)=0, \ \forall 1\leq j\leq N\}\\
&=\{\xi\in T(T^*X),\ \omega(\xi, H_{h_{Y_N}})=0, \ \forall 1\leq j\leq N\}\\
&=\text{span}(H_{h_{Y_1}},\ldots,H_{h_{Y_N}})^{\perp_{\omega_X}}.
\end{align*}
We fix $b \in({\rm ker}(a_{m}))^{\perp_{\omega_X}}$ and we write $b =\sum_{j=1}^N u_jH_{h_{Z_{j}}}$. By definition, $\mathcal{I}(H_{h_{Z_j}})=-dh_{Z_j}$ and $d\pi(H_{h_{Z_j}})=Z_j$ for any $j$, so there holds
\begin{align*}
&a_{m}^*\left(\mathcal{I}\left(\sum_{j=1}^N u_j H_{h_{Z_j}}\right)\right)=a_{m}^*\left(\sum_{j=1}^N u_j dh_{Z_{j}}\right)=\sup_{\eta\notin{\rm ker}(a_{m})} \frac{\left(\sum_{j=1}^N u_jdh_{Z_j}(\eta)\right)^2}{\sum_{j=1}^N dh_{Z_{j}}(\eta)^2}\\
&\qquad=\sup_{(\theta_j)\in \R^N}\frac{\left(\sum_{j=1}^N u_j\theta_j\right)^2}{\sum_{j=1}^N \theta_j^2}=\sum_{j=1}^N u_j^2=g\left(\sum_{j=1}^N u_jZ_j\right)\\
&\qquad =g\left(d\pi\left(\sum_{j=1}^N u_jH_{h_{Z_j}}\right)\right)
\end{align*}
where, to go from line 1 to line 2, we used that the $dh_{Z_j}$ are independent.
\end{proof}

From Lemma \ref{l:eqamg}, we deduce the following expression for $\Gamma_m$:
\begin{equation} \label{e:gammam3}
\begin{split}
& \qquad \qquad \Gamma_m=\R^+(\partial_t+B ), \\
& B=\left\{b \in {\rm ker}(a_m)^{\perp_{\omega_{X}}}, \ g(d\pi(b))\leq 1\right\}.
\end{split}
\end{equation}

Recall that $M=T^*(\R\times X)\setminus 0$, and let $\pi_2:M\rightarrow T^*X$ be the canonical projection on the second factor.
\begin{proposition} \label{p:diffrays}
Let $\gamma:I\rightarrow M$ be a null-ray,  as introduced in Definition \ref{d:defray}. Then, $\tau$ is constant along this null-ray, and necessarily:
 \begin{enumerate}[(i)]
 \item if $\tau\equiv c\neq 0$, then $\gamma$ is a null-bicharacteristic;
 \item if $\tau\equiv 0$, then $\gamma$ is contained in $\Sigma_{(2)}$ and tangent to the cones $\Gamma_m$ given by \eqref{e:gammam3}. Moreover, $\pi_2(\gamma)\subset T^*X$ is a characteristic curve, and its projection on $X$ (a singular curve) is traveled at speed $\leq 1$.
 \end{enumerate}
\end{proposition}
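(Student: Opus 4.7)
The plan is to first establish that $\tau$ is constant along $\gamma$, and then to treat the two cases $\tau\not\equiv 0$ and $\tau\equiv 0$ separately. For constancy of $\tau$, I would examine the $\partial_\tau$-component of vectors in $\Gamma_m$ in both regimes: outside $\Sigma_{(2)}$, \eqref{e:GammaHm} and \eqref{e:-} give $\Gamma_m=\R^\pm H_p(m)$, and since $p=\tau^2-a(x,\xi)$ is independent of $t$, we have $H_p\tau=-\partial_t p=0$; on $\Sigma_{(2)}$, formula \eqref{e:gammam3} exhibits every element of $\Gamma_m$ as a positive multiple of $\partial_t+b$ with $b\in T(T^*X)$, so the $\partial_\tau$-component vanishes as well. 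Both regimes together force $\tau$ constant along the connected null-ray $\gamma$.

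For case (i), if $\tau\equiv c\neq 0$ then $p=0$ forces $a\equiv c^2>0$, so $\gamma$ never enters $\Sigma_{(2)}=\{a=\tau=0\}$; the cone $\Gamma_m$ then reduces to the single half-line $\R^\pm H_p(m)$, and a Lipschitz curve whose set-valued derivative lies in this field of half-lines is, up to reparametrization, an integral curve of $\pm H_p$ contained in $p^{-1}(0)$, i.e., a null-bicharacteristic.

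For case (ii), if $\tau\equiv 0$ then $p=0$ yields $a\equiv 0$ along $\gamma$; since $a\geq 0$ attains its minimum along $\gamma$, one also has $da=0$, and hence $\gamma\subset\Sigma_{(2)}$. Formula \eqref{e:gammam3} then describes the tangent cone directly. Projecting by $\pi_2$, the curve $\lambda:=\pi_2(\gamma)\subset T^*X$ lies in $\{a=0\}$, which coincides with $\mathcal{D}^\perp\setminus 0$ (the zero section is avoided because $\tau=0$ forces $\xi\neq 0$ in $M$).

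The main technical step I expect is verifying that $\lambda$ is a characteristic for $\mathcal{D}^\perp$ in the sense of Definition \ref{d:charac}, i.e.\ that $\dot\lambda(s)\in\ker\overline{\omega}_X(\lambda(s))$. I plan to reuse the linear algebra from the proof of Lemma \ref{l:eqamg}, which identifies $\ker(a_m)=\mathrm{span}(H_{h_{Y_j}})^{\perp_{\omega_X}}$ and therefore $\ker(a_m)^{\perp_{\omega_X}}=\mathrm{span}(H_{h_{Y_j}})$. On the other hand, $\mathcal{D}^\perp=\bigcap_j\{h_{Y_j}=0\}$, so $T(\mathcal{D}^\perp)=\bigcap_j\ker dh_{Y_j}$. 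Writing $b=\sum_j c_j H_{h_{Y_j}}$, the direct computation $\omega_X(b,v)=-\sum_j c_j\, dh_{Y_j}(v)=0$ for any $v\in T(\mathcal{D}^\perp)$ gives the desired characteristic condition. Theorem \ref{t:singprojchar} then immediately yields that $\pi(\lambda)$ is a singular curve with abnormal extremal lift $\lambda$. The speed claim will follow from the constraint $g(d\pi(b))\leq 1$ in \eqref{e:gammam3}: after reparametrizing $\gamma$ by $t$ as in \eqref{e:tislarge}, the sub-Riemannian speed of $\pi(\lambda)$ is exactly $\sqrt{g(d\pi(b))}\leq 1$.
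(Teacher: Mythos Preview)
Your proposal is correct and follows essentially the same route as the paper's proof: constancy of $\tau$ from the absence of a $\partial_\tau$-component in $\Gamma_m$, case (i) by reduction to $\Gamma_m=\R^\pm H_p(m)$, and case (ii) by showing $\dot\lambda\in T\mathcal{D}^\perp\cap(T\mathcal{D}^\perp)^{\perp_{\omega_X}}=\ker\overline{\omega}_X$. The only cosmetic difference is that the paper argues abstractly via the identification $\ker(a_m)=T_{n(s)}\mathcal{D}^\perp$ (so that $b\in\ker(a_m)^{\perp_{\omega_X}}=(T\mathcal{D}^\perp)^{\perp_{\omega_X}}$ directly), whereas you unfold this by writing $b=\sum_j c_j H_{h_{Y_j}}$ and computing $\omega_X(b,v)$ explicitly; you should also make explicit, as the paper does, that $\dot\lambda\in T\mathcal{D}^\perp$ holds automatically since $\lambda$ stays in $\{a=0\}=\mathcal{D}^\perp$.
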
  
\begin{proof}
First, we note that $\tau$ is constant along null-rays since $\tau$ is preserved along integral curves of $H_p$, and $d\tau(v)=0$ for any $v\in\Gamma_m$ when $\Gamma_m$ is given by \eqref{e:gammam3}. 

\smallskip

If $\tau\equiv c\neq 0$, then $\Gamma_m=\R^{\pm}\cdot H_p(m)$ for $m\in\gamma(I)$. Thus $\gamma$ is a null-bicharacteristic, which proves (i).

\smallskip

We finally prove (ii); we assume that $\tau\equiv 0$ along $\gamma$. Let $s\in I$. We set $n(s)=\pi_2(\gamma(s))$. According to \eqref{e:gammam3}, we can write $\dot{\gamma}(s)=c(s)(\partial_t+b(s))$ with $b(s) \in T_{n(s)}\mathcal{D}^\perp$ since $a\equiv 0$ along the path. There holds ${\rm ker}(a_{n(s)})=T_{n(s)}\mathcal{D}^\perp$ where $a_{n(s)}$ is half the Hessian of $a$ at point $n(s)$. Plugging into the above formula, we also get $b(s) \in (T_{n(s)}\mathcal{D}^\perp)^{\perp_{\omega_{X}}}$. It follows that 
$$b(s) \in T_{n(s)}\mathcal{D}^\perp\cap(T_{n(s)}\mathcal{D}^\perp)^{\perp_{\omega_{X}}}={\rm ker}(\overline{\omega}_X(n(s))).$$
This implies that $\pi_2(\gamma)$ is a characteristic curve. Its projection on $X$ is a singular curve, by definition. Moreover, the inequality $g(d\pi(b))\leq 1$ in \eqref{e:gammam3} exactly means that this projection is traveled at speed $\leq 1$.
\end{proof}

Proposition \ref{p:diffrays} directly implies Corollary \ref{c:singpropag}. To sum up, singularities of the wave equation \eqref{e:subwave} when $A$ is a sub-Laplacian propagate only along integral curves of $H_a$ and characteristics for $\mathcal{D}^\perp$ (at speed $\leq 1$).

\paragraph{Comments.} The propagation of singularities at speeds $< 1$ along singular curves is not excluded by Corollary \ref{c:singpropag}. If such a slow propagation effectively exists (which is not proved by Corollary \ref{c:singpropag}), it is in strong contrast with the usual propagation ``at speed $1$'' along the integral curves of $H_a$ (as in Hörmander's theorem). As already mentioned in the introduction, we proved this surprising fact in a joint work with Yves Colin de Verdi\`ere \cite{CdVL21}: we gave explicit examples of initial data of a subelliptic wave equation whose singularities effectively propagate at any speed between $0$ and $1$ along a singular curve.

\smallskip

 Note that a similar phenomenon occurs for ``partial sub-Laplacians'' (typically, $\partial_x^2$ in $\R^2_{x,y}$), as proved in \cite{MU79} using a parametrix construction. The result of \cite{MU79} is presented as a model for the ``conical refraction'', which is the splitting of a ray  into a cone of rays by a biaxial crystal.

\section{Proof of Theorem \ref{t:inclusion} and Corollary \ref{c:xy}} \label{s:th1}

We now turn to the study of the wave kernel $K_G$. Section \ref{s:th1} is devoted to the proof of Theorem \ref{t:inclusion} and Corollary \ref{c:xy}, i.e., we deduce the wave-front set of the Schwartz kernel $K_G$ from the ``geometric'' propagation of singularities given by Corollary \ref{c:singpropag}. The idea is to consider $K_G$ itself as the solution of a subelliptic wave equation to which we can apply Corollary \ref{c:singpropag}. 

\subsection{$K_G$ as the solution of a wave equation}

We consider the product manifold $X\times X$, with coordinate $x$ on its first copy, and coordinate $y$ on its second copy. We set
\begin{align*}
A^\otimes=\frac12(A_x\otimes {\rm Id}_y+{\rm Id}_x\otimes A_y)
\end{align*}
 and we consider the operator
$$
P^\otimes=\partial_{tt}^2-A^\otimes
$$
acting on functions of $\R\times X_x\times X_y$.  Using \eqref{e:Schwartzkernel}, we can check that the Schwartz kernel $K_G$ is a solution of 
$${K_G}_{|t=0}=0, \qquad \partial_t{K_G}_{|t=0}=\delta_{x-y}, \qquad P^\otimes K_G=0.$$

\smallskip

The operator $A^\otimes$ is a self-adjoint non-negative real second-order differential operator on $X\times X$. Moreover it is subelliptic: it is immediate that the vector fields $Y_1\otimes {\rm Id}_y, \ldots, Y_K\otimes {\rm Id}_y, {\rm Id}_x\otimes Y_1,\ldots, {\rm Id}_x\otimes Y_K$ verify Hörmander's Lie bracket condition, since it is satisfied by $Y_1,\ldots,Y_K$. Hence, Theorem \ref{t:singprop} applies to $P$, with the cones (and the null-rays) being computed with $A^\otimes$ in $T^*(X\times X)$ instead of $A$ (see \eqref{e:gammam2hat}). We denote by  $\sim_t$ the relation of existence of a null-ray of length $|t|$ joining two given points of $T^*(X\times X)\setminus 0$ (see Remark \ref{r:nullray} for the omission of the variables $t$ and $\tau$ in the null-rays). 

\smallskip

Since $WF(K_G(0))=\emptyset$ and
$$
WF(\partial_tK_G(0))=\{(z,z,\zeta,-\zeta)\in T^*(X\times X)\setminus 0\},
$$
(see \cite[p. 93]{RS75}) we have, according to Theorem \ref{t:singprop},
\begin{equation}\label{e:incl1}
\begin{split}
WF(K_G(t))\subset \{(x,y,\xi,-\eta)\in T^*(X\times X)\setminus 0, &\ \exists (z,\zeta)\in T^*X\setminus 0,  \\
& (z,z,\zeta,-\zeta)\sim_t (x,y,\xi,-\eta)\}.
\end{split}
\end{equation}
Our goal is now to give a simpler expression for the right-hand side of \eqref{e:incl1}.

\smallskip

Let us denote by $g^1$ the sub-Riemannian metric on $X_x$ and by $g^2$ the sub-Riemannian metric on $X_y$. The sub-Riemannian metric on $X_x\times X_y$ is $g^\otimes=\frac12(g^1\oplus g^2)$. In other words, if $q=(q_1,q_2) \in X\times X$ and $v=(v_1,v_2)\in T_q(X\times X)\approx T_{q_1}X\times T_{q_2}X$, we have
\begin{equation} \label{e:splitmetric}
g^\otimes_q(v)=\frac12(g^1_{q_1}(v_1)+g^2_{q_2}(v_2)).
\end{equation}
Now, according to \eqref{e:gammam3}, the cones $\Gamma^\otimes_m$ associated to $A^\otimes$ are given by
\begin{equation} \label{e:gammam2hat}
\begin{split}
& \qquad \qquad \Gamma^\otimes_m=\R^+(\partial_t+B ), \\
& B=\{b \in {\rm ker}(a^\otimes_m)^{\perp_{\omega^\otimes}}, \ g^\otimes(d\pi^\otimes(b))\leq 1\}.
\end{split}
\end{equation}
Here, $\perp_{\omega^\otimes}$ designates the symplectic orthogonal with respect to the canonical symplectic form $\omega^\otimes$ on $T^*(X\times X)$, and $\pi^\otimes:T^*(X\times X)\rightarrow X\times X$ is the canonical projection.

\smallskip
 
To evaluate the right-hand side of \eqref{e:incl1}, we denote by $\approx_t$ the relation of existence of a null-ray of length $|t|$ joining two given points of $T^*X\setminus 0$ (the cones $\Gamma_m$ are subsets of $T(T^*(\R\times X))$ as defined in Section \ref{s:thecones}). Let us prove that
\begin{equation} \label{e:inclWF}
\begin{split}
& \{(x,y,\xi,-\eta)\in T^*(X\times X)\setminus 0, \ \exists (z,\zeta)\in T^*X\setminus 0, \ (z,z,\zeta,-\zeta)\sim_t (x,y,\xi,-\eta)\}\\
& \qquad \qquad \subset \{(x,y,\xi,-\eta)\in T^*(X\times X)\setminus 0, \ (x,\xi)\approx_t (y,\eta)\}.
\end{split}
\end{equation}
 Combining with \eqref{e:incl1}, it will immediately follow that 
\begin{equation} \label{e:inclkG}
WF(K_G(t))\subset \{(x,y,\xi,-\eta)\in T^*(X\times X)\setminus 0, \ (x,\xi)\approx_t (y,\eta)\}.
\end{equation}

\subsection{Proof of \eqref{e:inclWF}.}
 We denote by $\gamma:[0,t]\rightarrow T^*(X\times X)\setminus 0$ a null-ray from $(z,z,\zeta,-\zeta)$ to $(x,y,\xi,-\eta)$, parametrized by time. Our goal is to construct a null-ray of length $|t|$ in $T^*X\setminus 0$, from $(y,\eta)$ to $(x,\xi)$. It is obtained by concatenating a null-ray from $(y,\eta)$ to $(z,\zeta)$ with another one, from $(z,\zeta)$ to $(x,\xi)$. However, there are some subtleties hidden in the parametrization of this concatenated null-ray.
 
 \smallskip
 
 We write $\gamma(s)=(\alpha_1(s),\alpha_2(s),\beta_1(s),\beta_2(s))$, and for $i=1,2$ and $0\leq s\leq t$, we set $\gamma_i(s)=(\alpha_i(s),\beta_i(s))\in T^*X$. We also set $\delta_i(s)=g^i(d\pi_i(\dot{\gamma}_i(s)))$, where $\pi_i:T^*X\rightarrow X$ (here $X$ is the $i$-th copy of $X$). The upper dot denotes here and in the sequel the derivative with respect to the time variable. Since $g^\otimes(d\pi^\otimes(\dot{\gamma}(s)))\leq 1$ for any $s\in [0,t]$, we deduce from \eqref{e:splitmetric} that
$$
\frac12(\delta_1(s)+\delta_2(s))\leq 1.
$$
Note that it is possible that $\delta^1(s)>1$ or $\delta^2(s)>1$.

\smallskip 

We are going to construct a null-ray $\varepsilon:[0,t]\rightarrow T^*X$ of the form 
\begin{align}
\e(s)&=(\alpha_2(\theta(s)), -\beta_2(\theta(s))), \qquad 0\leq s\leq s_0 \label{e:timereversal}\\
\e(s)&=(\alpha_1(\theta(s)),\beta_1(\theta(s))), \qquad s_0\leq s\leq t. \nonumber
\end{align}
The parameter $s_0$ and the parametrization $\theta$ will be chosen so that the first part of $\varepsilon$ joins $(y,\eta)$ to $(z,\zeta)$ and the second part joins $(z,\zeta)$ to $(x,\xi)$. We choose $\theta(0)=t$, hence $\varepsilon(0)=(y,\eta)$. Then, for $0\leq s\leq s_0$, we choose $\theta(s)\leq t$ in a way to guarantee that $g^1(d\pi_1(\dot{\e}(s)))= 1$. This defines $s_0$ in a unique way as the minimal time for which $\varepsilon(s_0)=(z,\zeta)$. In particular, $\theta(s_0)=0$. A priori, we do not know that $s_0\leq t$, but we will prove it below. Then, for $s\geq s_0$, we choose $\theta(s)\geq 0$ in order to guarantee that $g^2(d\pi_2(\dot{\e}(s)))= 1$. This defines a time $s_1$ in a unique way as the minimal time for which $\e(s_1)=(x,\xi)$. Finally, if $s_1\leq t$, we extend $\e$ by $\varepsilon(s)\equiv(x,\xi)$ for $s_1\leq s\leq t$.

\smallskip

We check that $\varepsilon$ is a null-ray in $T^*X$. We come back to the definition of null-rays as tangent to the cones $\Gamma_m$. It is clear that
$$
{\rm ker}(a^\otimes_m)^{\perp_{\omega^\otimes}}={\rm ker}(a_m)^{\perp_{\omega_1}}\times{\rm ker}(a_m)^{\perp_{\omega_2}}
$$
where $\omega_i$ is the canonical symplectic form on $T^*X_i$. Therefore, $\dot{\e}(s)\in {\rm ker}(a_m)^{\perp_{\omega_i}}$ for $i=1$ when $0\leq s\leq s_0$ and for $i=2$ when $s_0\leq s\leq t$. Thanks to Lemma \ref{l:eqamg}, the inequality in \eqref{e:gammam2} (but for the cones in $X_1$ and $X_2$) is verified by $\dot{\e}(s)$ for any $0\leq s\leq t$ by definition.
There is a ``time-reversion'' (or ``path reversion'') in the first line of \eqref{e:timereversal}; the property of being a null-ray is preserved under time reversion together with momentum reversion. Hence $\varepsilon$ is a null-ray in $T^*X$. 

\smallskip

The fact that $s_0,s_1\leq t$ follows from the following computation:
\begin{align*}
t\geq \int_0^t g^\otimes(d\pi^\otimes(\dot{\gamma}(s)))ds&=\frac12\int_0^{t}g^1(d\pi_1(\dot{\gamma}_1(s)))ds+\frac12\int_0^{t}g^2(d\pi_2(\dot{\gamma}_2(s)))ds\\
&=\frac12 \int_0^{s_0}g^1(d\pi_1(\dot{\e}(s)))ds+\frac12 \int_{s_0}^{s_1}g^2(d\pi_2(\dot{\e}(s)))ds \\
&=s_0+(s_1-s_0)=s_1.
\end{align*}
where the second equality follows from the fact that $\varepsilon$ is a reparametrization of $\gamma_1$ (resp. $\gamma_2$) for $s\in [0,s_0]$ (resp. $[s_0,s_1]$). This concludes the proof of \eqref{e:inclWF}.

\subsection{Conclusion of the proof of Theorem \ref{t:inclusion}}

%
Let us finish the proof of Theorem \ref{t:inclusion}. 
We fix $(x_0,\xi_0)$, $(y_0,\eta_0)$ and $t_0$ such that there is no null-ray from $(y_0,\eta_0)\in T^*X$ to $(x_0,\xi_0)\in T^*X$ in time $t_0$. 

\smallskip

\emph{Claim.} There exist a conic neighborhood $V$ of $(x_0,y_0,\xi_0,-\eta_0)$ in $T^*(X\times X)$ and a neighborhood $V_0$ of $t_0$ in $\R$ such that for any $N\in\N$ and any $t\in V_0$, $\partial_t^{2N}K_G(t)$ is smooth in  $V$.  

\begin{proof}
We choose $V$ so that for $(x,y,\xi,-\eta)\in V$ and $t\in V_0$, there is no null-ray from $(y,\eta)$ to $(x,\xi)$ in time $t$. Such a $V$ exists, since otherwise by extraction of null-rays (which are Lipschitz with a locally uniform constant, see \eqref{e:tislarge}), there would exist a null-ray from $(y_0,\eta_0)$ to $(x_0,\xi_0)$ in time $t_0$. Then, we can check that for any $N\in\N$, $K_G^{(2N)}=\partial_{t}^{2N}K_G$ is a solution of
$${K_G^{(2N)}}_{|t=0}=0, \qquad \partial_t{K_G^{(2N)}}_{|t=0}=(A^\otimes)^N\delta_{x-y}, \qquad P^\otimes K_G^{(2N)}=0.$$
Repeating the above argument leading to \eqref{e:inclkG} with $K_G^{(2N)}$ instead of $K_G$, we obtain
$$
WF(K_G^{(2N)}(t))\subset \{(x,y,\xi,-\eta)\in T^*(X\times X)\setminus 0, \ (x,\xi)\approx_t (y,\eta)\},
$$
which proves the claim.
\end{proof}

We deduce from the claim that if there is no null-ray from $(y_0,\eta_0)\in T^*X$ to $(x_0,\xi_0)\in T^*X$ in time $t_0$, then $(t_0,\tau_0,x_0,y_0,\xi_0,-\eta_0)\notin WF(K_G)$ for any $\tau_0\in \R$.

\smallskip

Finally, if there is a null-ray from $(y_0,\eta_0)$ to $(x_0,\xi_0)$ in time $t_0$, then $a(x_0,\xi_0)=a(y_0,\eta_0)$, and due to the fact that $WF(K_G)$ is included in the characteristic set of $\partial_{tt}^2-A^\otimes$, the only $\tau_0$'s for which $(t_0,\tau_0,x_0,y_0,\xi_0,-\eta_0)\in WF(K_G)$ is possible are the ones satisfying $\tau_0^2=a(x_0,\xi_0)=a(y_0,\eta_0)$. This concludes the proof of Theorem \ref{t:inclusion}.

\begin{remark} \label{r:biblio} Theorem \ref{t:inclusion} allows to recover some results already known in the literature.

In the situations studied in \cite{Las82}, \cite{LasLas82} and \cite{Mel86}, $\Sigma_{(2)}$ is a symplectic manifold (a typical example is given by Example \ref{e:Heis}). In this case, thanks to \eqref{e:gammam2}, we see that the only null-rays starting from points in $\Sigma_{(2)}$ are lines in $t$. Therefore Theorem \ref{t:inclusion} implies:
\begin{itemize}
\item the ``wave-front part'' of the main results of \cite{Las82} and \cite{LasLas82} (but not the effective construction of parametrices handled in these papers);
\item Theorem 1.8 in \cite{Mel84}.
\end{itemize}
\end{remark}

\subsection{Proof of Corollary \ref{c:xy}} \label{s:singsuppG}

We finally prove Corollary \ref{c:xy}. We fix $x,y\in X$ with $x\neq y$ and we denote by $T_{s}$ the minimal length of a singular curve joining $x$ to $y$. 

\smallskip

 We consider $\varphi:\R\rightarrow \R\times X\times X$, $t\mapsto (t,x,y)$ which has conormal set $N_{\varphi}=\{(t,x,y,0,\xi,\eta)\}$ (in other words $N_\varphi$ corresponds to $\tau=0$). Using Theorem \ref{t:inclusion} and Proposition \ref{p:diffrays}, we see that $WF(\mathscr{G})$ does not intersect the conormal set of $\varphi_{|(-T_{s},T_{s})}$. Then, \cite[Theorem 2.5.11']{FIOs} ensures that $\mathscr{G}$, which is the pull-back of $K_G$ by $\varphi_{|(-T_{s},T_{s})}$, is well-defined as a distribution over $(-T_s,T_s)$. Of course, ${\rm Sing Supp}(\mathscr{G})$ is the projection of $WF(\mathscr{G})$ (for $|t|< T_{s}$).

\smallskip

By definition of $T_s$, for $|t|< T_{s}$, null-rays between $x$ and $y$ are contained in $\{\tau\neq 0\}$, thus they are null-bicharacteristics (see Proposition \ref{p:diffrays}). Hence, the singularities of $\mathscr{G}$ occur at times belonging to the set $\mathscr{L}$ of lengths of normal geodesics (for $\tau>0$, we obtain normal geodesics from $y$ to $x$, and for $\tau<0$, normal geodesics from $x$ to $y$).

\begin{remark}
If $x=y$, the same reasoning as in the proof of Corollary \ref{c:xy} says nothing more than ${\rm Sing Supp}(K_G(\cdot,x,x))\subset \R$ since for any point $(x,\xi)\in\mathcal{D}^\perp$ and any $t\in\R$, the constant path joining $(x,\xi)$ to $(x,\xi)$ in time $t$ is a null-ray (with $\tau\equiv0$). 
\end{remark}

\appendix 
\section{Appendix} 
\subsection{Sign conventions in symplectic geometry} \label{a:symp} \label{a:techn}

In the present work, we take the following conventions (the same as \cite{Hor07}, see Chapter 21.1): on a symplectic manifold with canonical coordinates $(x,\xi)$, the symplectic form is $\omega=d\xi\wedge dx$, and the Hamiltonian vector field $H_f$ of a smooth function $f$ is defined by the relation $\omega(H_f,\cdot)=-df(\cdot)$. In coordinates, it reads
\begin{equation*}
H_f=\sum_j (\partial_{\xi_j}f) \partial_{x_j} -(\partial_{x_j} f)\partial_{\xi_j}.
\end{equation*}
In these coordinates, the Poisson bracket is 
\begin{equation*}
\{f,g\}=\omega(H_f,H_g)=\sum_j (\partial_{\xi_j}f) (\partial_{x_j}g) -(\partial_{x_j} f)(\partial_{\xi_j}g),
\end{equation*}
which is also equal to $H_fg$ and $-H_gf$.

\subsection{Pseudodifferential operators} \label{a:pseudo}

This appendix is a short reminder on basic properties of pseudodifferential operators. Most proofs can be found in \cite{Hor07}. In this paper, we work with the class of polyhomogeneous symbols (defined below), which is slightly smaller than the usual class of symbols but has the advantage that the subprincipal symbol can be read easily when using the Weyl quantization (see \cite{Hor07}, the paragraph before Section 18.6).

\smallskip

 
We consider $\Omega$ an open set of a $d$-dimensional manifold, and $\mu$ a smooth volume on $\Omega$. The variable in $\Omega$ is denoted by $q$. Let $\pi:T^*\Omega\rightarrow \Omega$ be the canonical projection. 

\smallskip

$S_\hom^n(T^*\Omega)$ stands for the set of homogeneous symbols of degree $n$ with compact support in $\Omega$. We also denote by $S_{\rm phg}^n(T^*\Omega)$ the set of polyhomogeneous symbols of degree $n$ with compact support in $\Omega$. Hence, $a\in S_{\rm phg}^n(T^*\Omega)$ if $a\in C^\infty(T^*\Omega)$, the projection $\pi(\supp(a))$ is a compact of $\Omega$, and there exist $a_j\in S^{n-j}_{\text{hom}}(T^*\Omega)$ such that for any $N\in \mathbb{N}$, $a-\sum_{j=0}^N a_j\in S_{\rm phg}^{n-N-1}(T^*\Omega)$. We denote by $\Psi^n_\phg(\Omega)$ the space of polyhomogeneous pseudodifferential operators of order $n$ on $\Omega$, with a compactly supported kernel in $\Omega\times\Omega$.

\smallskip

We use the Weyl quantization denoted by $\Op:S^n_\phg(T^*\Omega)\rightarrow \Psi^n_\phg(\Omega)$. It is obtained by using partitions of unity and the formula in local coordinates
\begin{equation*}
\Op(a)f(q)=\frac{1}{(2\pi)^d}\int_{\R^d_{q'}\times \R^d_{p}}e^{i\langle q-q',p\rangle}a\left(\frac{q+q'}{2},p\right)f(q')dq'dp.
\end{equation*}
If $a$ is real-valued, then $\Op(a)^*=\Op(a)$. Moreover, with this quantization, the principal and subprincipal symbols of $A=\Op(a)$ with $a\sim \sum_{j\leq n} a_j$ are simply $\sigma_p(A)=a_n$ and $\sigma_{\text{sub}}(A)=a_{n-1}$ (usually, the subprincipal symbol is defined for operators acting on half-densities, but we make here the identification $f\leftrightarrow fd\nu^{1/2}$).

\smallskip

We also have the following properties:
\begin{enumerate}
\item If $A\in \Psi^l_\phg(\Omega)$ and $B\in \Psi^n_\phg(\Omega)$, then $[A,B]\in\Psi^{l+n-1}_\phg(\Omega)$. Moreover, $\sigma_p([A,B])=\frac{1}{i}\{\sigma_p(a),\sigma_p(b)\}$ where the Poisson bracket is taken with respect to the canonical symplectic structure of $T^*\Omega$.
\item If $X$ is a vector field on $\Omega$ and $X^*$ is its formal adjoint in $L^2(\Omega,\mu)$, then $X^*X$ is a second order pseudodifferential operator, with $\sigma_p(X^*X)=h_X^2$ and $\sigma_{\text{sub}}(X^*X)=0$. Here, for $X$ a vector field, we denoted by $h_{X}$ the momentum map given in canonical coordinates $(x,\xi)$ by $h_{X}(x,\xi)=\xi(X(x))$. 
\item If $A\in \Psi_\phg^n(\Omega)$, then A maps continuously the space $H^s(\Omega)$ to the space $H^{s-n}(\Omega)$. 
\end{enumerate}
Finally, we define the essential support of $A$, denoted by ${\rm essupp}(A)$, as the complement in $T^*\Omega$ of the points $(q,p)$ which have a conic-neighborhood $W$ so that $A$ is of order $-\infty$ in $W$.

\subsection{The cones $\Gamma_m$ as generalized Hamiltonians} \label{s:what}
In this section, we interpret the set $B=B(m)$ which appears in the formula \eqref{e:gammam2}, namely
$$B(m)=\left\{b \in {\rm ker}(a_m)^{\perp_{\omega_{X}}}, \ a_m^*(\mathcal{I}(b))\leq 1\right\},$$
 as a generalized Hamiltonian, just adapting the notion of Clarke generalized gradient (see \cite[Chapter 1.2]{Cla90}) to the ``Hamiltonian'' framework.

\begin{definition}
Let $f$ be an almost everywhere differentiable function on $T^*X$ and let $\Omega_f$ be the set of points where it is not differentiable. Its generalized Clarke Hamiltonian $\mathcal{H}f(x)$ at $x\in\Omega_f$ is the set
$$
\mathcal{H}f(x)={\rm cxhl}\left\{\underset{j\rightarrow +\infty}{\lim} H_f(x_j), \ x_j\rightarrow x, \ x_j\notin \Omega_f\right\}\subset T_x(T^*X)
$$
where ${\rm cxhl}$ denotes the convex hull.
\end{definition}
 The main result of this section is the following:
\begin{proposition} \label{p:limitingcones}
For any $m\in \Sigma_{(2)}$, $B(m)=\mathcal{H}\sqrt{a}(m)$.
\end{proposition}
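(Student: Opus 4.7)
The plan is to identify both sides of $B(m) = \mathcal{H}\sqrt{a}(m)$ with the same explicit subset of $T_m(T^*X)$, via the fundamental matrix of $a_m$. Setting $A_m u := a_m(u,\cdot) \in T_m^*(T^*X)$ and $F_m := \mathcal{I}^{-1}\circ A_m$, one has $\ker F_m = \ker a_m$, and a short computation using non-degeneracy of $\omega_X$ gives $\mathrm{range}(F_m) = \ker(a_m)^{\perp_{\omega_X}}$. Combined with the Cauchy--Schwarz identity $a_m^*(A_m u) = a_m(u)$, this rewrites
\begin{equation*}
B(m) = \{F_m(u) \ : \ a_m(u) \leq 1\}.
\end{equation*}

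On the Clarke side, since $H_{\sqrt{a}} = H_a/(2\sqrt{a})$ wherever $a > 0$, I would take a sequence $m_j \to m$ with $a(m_j) > 0$ and analyse the limits of $H_{\sqrt{a}}(m_j)$. Write $v_j := m_j - m$ in a local chart and, up to extraction, let $e_j := v_j/\|v_j\| \to e$ on the unit sphere. If $a_m(e) > 0$, the Taylor expansions $a(n) = a_m(n-m) + o(\|n-m\|^2)$ and $da(n) = 2a_m(n-m,\cdot) + o(\|n-m\|)$ (which hold since $a(m) = 0$ and $da(m) = 0$) give $H_a(m_j) = -2F_m(v_j) + o(\|v_j\|)$ and $\sqrt{a(m_j)} = \|v_j\|\sqrt{a_m(e) + o(1)}$, hence
\begin{equation*}
H_{\sqrt{a}}(m_j) \longrightarrow -F_m(e)/\sqrt{a_m(e)} = F_m(u), \qquad u := -e/\sqrt{a_m(e)},
\end{equation*}
with $a_m(u) = 1$. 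If instead $a_m(e) = 0$, then $a_m(v_j) = o(\|v_j\|^2)$, and the ``firstly'' paragraph in the proof of Lemma \ref{l:inner} shows $d(\sqrt{a})(m_j) \to 0$, whence $H_{\sqrt{a}}(m_j) \to 0$. The set of all subsequential limits at $m$ is thus $\{F_m(u) : a_m(u) = 1\} \cup \{0\}$ (with every such $u$ realised by choosing $e = -u$).

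It remains to take the convex hull. Subadditivity of the seminorm $a_m^{1/2}$ on $T_m(T^*X)$ yields ${\rm cxhl}\{F_m(u) : a_m(u) = 1\} \subseteq \{F_m(u) : a_m(u) \leq 1\}$ (apply $a_m^{1/2}$ to $\sum \lambda_i u_i$ with $\sum \lambda_i = 1$ and $a_m(u_i) = 1$). For the reverse inclusion, any $u$ with $a_m(u) = t^2 \in (0,1]$ gives $F_m(u) = \tfrac{1+t}{2}F_m(u/t) + \tfrac{1-t}{2}F_m(-u/t)$, while any $u \in \ker a_m = \ker F_m$ gives $F_m(u) = 0 = \tfrac12 F_m(u') + \tfrac12 F_m(-u')$ for any unit $u'$. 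Hence $\mathcal{H}\sqrt{a}(m) = \{F_m(u) : a_m(u) \leq 1\} = B(m)$. The only mildly delicate step is the degenerate regime $a_m(e) = 0$, where one needs $d(\sqrt{a})(m_j) \to 0$ as a full covector on $T_m(T^*X)$ and not only pointwise against fixed test directions; this is precisely where the $2$-homogeneity of $a$ in $\xi$ is used, exactly as in Lemma \ref{l:inner}.
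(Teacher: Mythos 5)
Most of your proposal is sound and in fact follows the paper's own route: your identification $B(m)=\{F_m(u):a_m(u)\le 1\}$ is the content of Lemmas \ref{l:funda} and \ref{l:formulaGammafundamatrix} (your $F_m$ is $-F$ in the paper's normalization, which is immaterial since these sets are symmetric), your limit computation in the non-degenerate directions is \eqref{e:equivF}, and your convex-hull manipulations are correct. The genuine gap is exactly the step you flag as ``mildly delicate'': the claim that $a_m(e)=0$ forces $H_{\sqrt{a}}(m_j)\to 0$ is false, and the ``firstly'' paragraph of Lemma \ref{l:inner} cannot be imported to justify it. Take the Martinet symbol $a=\xi^2+(\eta+x^2\zeta)^2$ of Example \ref{e:Martinet} and $m=(x,y,z;\xi,\eta,\zeta)=(0,0,0;0,0,1)$, so that $a_m=(d\xi)^2+(d\eta)^2$. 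For $m_j=(t_j,0,0;0,t_j^2,1)$ with $t_j\to 0$ one has $a(m_j)=4t_j^4>0$, $m_j-m=t_j\partial_x+t_j^2\partial_\eta$, hence $a_m(m_j-m)=t_j^4=o(\|m_j-m\|^2)$ and the limiting direction $e=\partial_x$ lies in $\ker a_m$; yet a direct computation gives $H_{\sqrt{a}}(m_j)=\partial_y+t_j^2\partial_z-2t_j\partial_\xi\to\partial_y\neq 0$, and $d(a^{1/2})(m_j)\to d\eta$, so the assertion $d(a^{1/2})(m_j)=o(1)$ that you quote does not hold in this regime. The limit $\partial_y$ does belong to $B(m)$ (it is precisely the abnormal direction), so the Proposition itself is not endangered, but your argument gives no control on such limits, and the inclusion $\mathcal{H}\sqrt{a}(m)\subset B(m)$ is exactly what is at stake along sequences approaching $m$ tangentially to $\ker a_m$. (The related assertion that $0$ is always a subsequential limit is also unjustified, but harmless: your forward inclusion only uses the limits $F_m(u)$ with $a_m(u)=1$, which you do realize correctly.)

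The fix is to drop the dichotomy on $a_m(e)$ and bound the limiting covector directly. Since $a\ge 0$ is smooth with $a(m)=0$, $da(m)=0$, a Taylor expansion with the sign condition gives, for every fixed $w$, the Glaeser-type bound $|da(n)(w)|\le 2\sqrt{a(n)}\,\big(\sup_{\|n'-n\|\le r}\tfrac12\partial_w^2a(n')\big)^{1/2}$, whence $\limsup_j |d(a^{1/2})(m_j)(w)|\le a_m(w)^{1/2}$ for every $w$, with no assumption on the direction of approach. Consequently any subsequential limit $L$ of $H_{\sqrt{a}}(m_j)$ satisfies $|\mathcal{I}(L)(w)|\le a_m(w)^{1/2}$ for all $w$, i.e.\ $\mathcal{I}(L)$ annihilates $\ker a_m$ and $a_m^*(\mathcal{I}(L))\le 1$, so $L\in B(m)$; since $B(m)$ is closed and convex, $\mathcal{H}\sqrt{a}(m)\subset B(m)$ follows with no case distinction, and together with your forward inclusion this proves the Proposition. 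Note that this degenerate regime is also the thin point of the paper's own converse, which rests on \eqref{e:equivF}, itself established only when $a_m(m_j-m)$ is not $o(\|m_j-m\|^2)$; so you located the delicate step correctly, but the resolution you propose for it does not work as written.
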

This proposition, beside giving an alternative proof of Lemma \ref{l:inner}, draws a link between our computations and the Pontryagin maximum principle in the Clarke formulation, which asserts that any sub-Riemannian geodesic (see Section \ref{s:sR}) is a solution of the differential inclusion
$$
\dot{\gamma}(s)\in \mathcal{H}\sqrt{a}(\gamma(s)).
$$
 The projection of a null-ray in $T^*X$ is also by Definition \ref{d:defray} a solution of this differential inclusion, and this ``explains'' why abnormal extremals appear naturally in Corollary \ref{c:singpropag}.

\smallskip

Before proving Proposition \ref{p:limitingcones}, we introduce the ``fundamental matrix'' $F$ (see \cite[Section 21.5]{Hor07}) defined as follows:
\begin{equation} \label{e:fundmatrix}
\forall Y,Z\in T_{m}(T^*X), \qquad \omega_{X}(Y,FZ)=a_m(Y,Z).
\end{equation}
Here $a_m(Y,Z)=\frac12 (\Hess a)(m)(Y,Z)$. Then, $\omega_{X}(FY,Z)=-\omega_{X}(Y,FZ)$. As already explained in Section \ref{s:formula}, there is here a slight abuse of notations since $T_m(T^*X)$ stands for $T_{\pi_2(m)}(T^*X)$ where $\pi_2:M\rightarrow T^*X$ is the canonical projection on the second factor.

\begin{lemma} \label{l:funda}
The fundamental matrix induces an isomorphism
 \begin{equation*}
F:T_{m}(T^*X)/{\rm ker}(a_m)\rightarrow {\rm ker}(a_m)^{\perp_{\omega_{X}}}
\end{equation*}
\end{lemma}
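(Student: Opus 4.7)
The plan is to check the claim by elementary linear algebra on the finite-dimensional symplectic vector space $V=T_m(T^*X)$, viewing $F\colon V\to V$ as the linear map determined by $\omega_X(Y,FZ)=a_m(Y,Z)$. I will verify in turn: (i) $F$ descends to the quotient, (ii) its image lies in $\ker(a_m)^{\perp_{\omega_X}}$, (iii) the induced map is injective, (iv) it is surjective, by a dimension count using the non-degeneracy of $\omega_X$.

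First I would observe that since $a_m$ is a non-negative quadratic form, the polar bilinear form is also non-negative symmetric, and by Cauchy--Schwarz one has $a_m(Y,Z)=0$ for every $Y\in V$ as soon as $Z\in\ker(a_m)$. Feeding this into the defining identity \eqref{e:fundmatrix} gives $\omega_X(Y,FZ)=0$ for every $Y$, and the non-degeneracy of $\omega_X$ then forces $FZ=0$. Hence $F(\ker(a_m))=\{0\}$ and $F$ passes to the quotient $V/\ker(a_m)$. For (ii), fix $W\in\ker(a_m)$ and compute, using the antisymmetry relation $\omega_X(FY,Z)=-\omega_X(Y,FZ)$ recalled just after \eqref{e:fundmatrix}, that $\omega_X(FZ,W)=-\omega_X(W,FZ)=-a_m(W,Z)=0$; this shows $FZ\in\ker(a_m)^{\perp_{\omega_X}}$ for every $Z\in V$.

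For (iii), suppose $FZ=0$. Then $a_m(Y,Z)=\omega_X(Y,FZ)=0$ for every $Y\in V$, so $Z\in\ker(a_m)$ and its class in $V/\ker(a_m)$ is zero; thus the induced map is injective. For (iv), I would argue by dimensions: the non-degeneracy of $\omega_X$ gives
\begin{equation*}
\dim \ker(a_m)^{\perp_{\omega_X}}=\dim V-\dim\ker(a_m)=\dim\bigl(V/\ker(a_m)\bigr),
\end{equation*}
and since the induced map goes between two spaces of the same (finite) dimension, injectivity forces surjectivity. Combining (i)--(iv) yields the claimed isomorphism.

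There is essentially no analytic obstacle here; the only point where one must be slightly careful is the use of the positivity of $a_m$ to pass from $Z\in\ker(a_m)$ (the kernel of the quadratic form) to the statement that $Z$ is in the kernel of the associated bilinear form, which is what makes $F$ well-defined on the quotient and gives injectivity. Everything else is the standard interplay between a symmetric form and the symplectic duality on $V$.
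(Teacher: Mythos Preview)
Your proof is correct. The paper takes a slightly different route for surjectivity: instead of proving injectivity and then invoking the dimension identity $\dim\ker(a_m)^{\perp_{\omega_X}}=\dim V-\dim\ker(a_m)$, it constructs a preimage directly. Given $b\in\ker(a_m)^{\perp_{\omega_X}}$, the paper sets $b_0=-\mathcal{I}(b)\in(\ker a_m)^{\perp}$, observes that $a_m$ is coercive on $V/\ker(a_m)$, and applies the Lax--Milgram lemma to produce $Z$ with $a_m(Z,\cdot)=b_0$, whence $FZ=b$. Your argument is more elementary (no appeal to Lax--Milgram, which is overkill in finite dimensions) and also makes explicit the containment $F(V)\subset\ker(a_m)^{\perp_{\omega_X}}$ and the injectivity, which the paper leaves implicit. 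The paper's approach, on the other hand, yields an explicit inverse and foreshadows the use of the same construction in the next lemma.
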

\begin{proof} 
$F$ clearly passes to the quotient by ${\rm ker}(a_m)$ by \eqref{e:fundmatrix}. Let $b \in {\rm ker}(a_m)^{\perp_{\omega_{X}}}$. We set $b_0=-\mathcal{I}(b)\in {\rm ker}(a_m)^\perp$. The bilinear form $a_m$ is continuous and coercive on $T_m(T^*X)/{\rm ker}(a_m)$, and $b_0$ is a linear form on this space, thus by Lax-Milgram's lemma we get the existence of $Z\in T_m(T^*X)/{\rm ker}(a_m)$ such that $b_0=a_m(Z,\cdot)$. Finally, we have $$-\mathcal{I}(b)=b_0=a_m(\cdot,Z)=\omega(\cdot,FZ)=-\mathcal{I}(FZ)$$ according to \eqref{e:Iisom}, which means that $b=FZ$.
\end{proof}

Now we derive a formula for $B(m)$ in terms of the fundamental matrix (see formula (2.6) in \cite{Mel86}):
\begin{lemma}\label{l:formulaGammafundamatrix}
There holds
\begin{equation} \label{e:Gammam4}
B(m)={\rm cxhl}\left\{\frac{FZ}{a_m(Z)^{\frac12}}, \ Z\in T_{m}(T^*X)/{\rm ker}(a_m)\right\}.
\end{equation}
\end{lemma}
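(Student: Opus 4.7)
The plan is to use the isomorphism supplied by Lemma~\ref{l:funda} in order to reduce the identification of $B(m)$ to a purely linear-algebraic statement about the positive definite quadratic form induced by $a_m$ on the quotient $V:=T_m(T^*X)/\ker(a_m)$. Throughout, I will write $[Z]\in V$ for the class of $Z\in T_m(T^*X)$, noting that $F$ descends to an isomorphism $\bar F:V\to\ker(a_m)^{\perp_{\omega_X}}$ by Lemma~\ref{l:funda}, and that $a_m$ descends to a positive definite form on $V$.

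First, I would parametrize $B(m)$ by $V$. By Lemma~\ref{l:funda}, every $b\in\ker(a_m)^{\perp_{\omega_X}}$ can be written uniquely as $b=\bar F[Z]$. Using the defining relation $\omega_X(Y,FZ)=a_m(Y,Z)$ together with $\mathcal{I}(b)=\omega_X(b,\cdot)$ and the antisymmetry of $\omega_X$, I obtain
\begin{equation*}
\mathcal{I}(FZ)(Y)=\omega_X(FZ,Y)=-\omega_X(Y,FZ)=-a_m(Y,Z).
\end{equation*}
Plugging this into the definition \eqref{e:S*} of $a_m^*$ gives
\begin{equation*}
a_m^*(\mathcal{I}(FZ))=\sup_{Y\notin\ker(a_m)}\frac{a_m(Y,Z)^2}{a_m(Y)}.
\end{equation*}

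The key step is then to evaluate this supremum. Applying the Cauchy--Schwarz inequality to the non-negative bilinear form $a_m$ (equivalently, to the positive definite form it induces on $V$), I get $a_m(Y,Z)^2\leq a_m(Y)\,a_m(Z)$, hence $a_m^*(\mathcal{I}(FZ))\leq a_m(Z)$. Choosing $Y=Z$ (assuming $[Z]\neq 0$, which is the only case that matters) achieves equality, so $a_m^*(\mathcal{I}(FZ))=a_m(Z)$. Consequently,
\begin{equation*}
B(m)=\bigl\{\bar F[Z]\in\ker(a_m)^{\perp_{\omega_X}}\,;\ a_m(Z)\leq 1\bigr\}=\bar F(\mathcal B),
\end{equation*}
where $\mathcal B\subset V$ is the closed unit ball of the positive definite form $a_m$.

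It then remains to identify $\bar F(\mathcal B)$ with the convex hull appearing in \eqref{e:Gammam4}. Since $a_m$ is positive definite on the finite-dimensional space $V$, the closed unit ball $\mathcal B$ is the convex hull of its boundary unit sphere $\mathcal S=\{[Z]\in V\,;\ a_m(Z)=1\}$ (a standard fact: the ball is closed, convex, contains $\mathcal S$; conversely any interior point is a convex combination of two antipodal points on $\mathcal S$, and $\mathcal S$ is compact so its convex hull is closed). Using that $\bar F$ is a linear isomorphism, convex hulls commute with $\bar F$, so
\begin{equation*}
B(m)=\bar F(\mathcal B)=\bar F({\rm cxhl}\,\mathcal S)={\rm cxhl}\,\bar F(\mathcal S)={\rm cxhl}\Bigl\{\tfrac{FZ}{a_m(Z)^{1/2}}\,;\ Z\notin\ker(a_m)\Bigr\},
\end{equation*}
which is \eqref{e:Gammam4}. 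The only genuinely non-trivial step is the Cauchy--Schwarz computation of $a_m^*(\mathcal{I}(FZ))$; everything else is either the isomorphism of Lemma~\ref{l:funda} or the elementary geometry of a positive definite quadratic form in finite dimension.
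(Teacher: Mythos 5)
Your proof is correct and follows essentially the same route as the paper: it rests on the isomorphism of Lemma~\ref{l:funda}, the identity $\mathcal{I}(FZ)=-a_m(\cdot,Z)$, Cauchy--Schwarz (with its equality case at $Y=Z$), and the antipodal-points convexity argument. The only difference is organizational: you package the two inclusions of the paper's proof into the single equality $a_m^*(\mathcal{I}(FZ))=a_m(Z)$, so that $B(m)$ becomes the image under $\bar F$ of the unit ball of $a_m$, which the paper instead handles via the $\lambda$-trick and the convexity of $a_m^*$.
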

\begin{proof}
We have to compare \eqref{e:gammam2} with \eqref{e:Gammam4}.

\smallskip

First, let $b \in {\rm ker}(a_m)^{\perp_{\omega_{X}}}$ with $a_m^{*}(\mathcal{I}(b ))\leq 1$. By the proof of Lemma \ref{l:funda}, there exists $Z\in T_{m}(T^*X)/{\rm ker}(a_m)$ such that $-\mathcal{I}(b)=b_0=a_m(Z,\cdot)$. Using that $a_m^{*}(b_0)\leq 1$, we obtain $a_m(Z)\leq 1$, hence $b_0=\lambda a_m(Z,\cdot)/a_m(Z)^{\frac12}$ where $|\lambda|\leq 1$. It follows that $b =-\mathcal{I}^{-1}(b_0)=\lambda FZ/a_m(Z)^{\frac12}$. This proves that the cones given by \eqref{e:gammam2} are included in those given by \eqref{e:Gammam4}.

\smallskip

For the converse, we first notice that $FZ/a_m(Z)^{\frac12}\in {\rm ker}(a_m)^{\perp_{\omega_{X}}}$, and thus it is also the case for any convex combination. Also, it follows from the definitions of $\mathcal{I}$, $F$, $a_m^*$ and the Cauchy-Schwarz inequality that 
$$\forall Z\in T_{m}(T^*X)/{\rm ker}(a_m), \quad a_m^*(\mathcal{I}(FZ)/a_m(Z)^{\frac12})\leq 1.$$
By convexity of $a_m^*$, we obtain that any convex combination $b$ of elements of the form $FZ/a_m(Z)^{\frac12}$ satisfies $a_m^*(\mathcal{I}(b))\leq 1$. This concludes the proof.
\end{proof}

\begin{proof}[Proof of Proposition \ref{p:limitingcones}]
As in Section \ref{s:innerwithformula}, we work in a chart near $m$. Following the computations of Lemma \ref{l:linalg}, we have for any sequence of points $(m_j)_{j\in\N}$ such that $m_j-m\notin {\rm ker}(a_m)$,
\begin{align*}
\frac12\omega_{X}(H_a(m_j),w)&=-\frac12da(m_j)(w)=-a_m(m_j-m,w)+o(m_j-m)\\
&=\omega_{X}(F(m_j-m),w)+o(m_j-m),
\end{align*}
which implies
\begin{equation} \label{e:equivF}
H_{\sqrt{a}}(m_j)=\frac12\frac{H_a(m_j)}{a(m_j)^{\frac12}}=\frac{F(m_j-m)}{a_m(m_j-m)^{\frac12}}+o(1).
\end{equation}
Choosing $m_j=m+\e_j Z$ with $\e_j\rightarrow 0$, we obtain
$$
H_{\sqrt{a}}(m_j)\underset{j\rightarrow+\infty}{\longrightarrow}\frac{FZ}{a_m(Z)^{\frac12}}
$$
which proves that $B(m)\subset \mathcal{H}\sqrt{a}(m)$ according to Lemma \ref{l:formulaGammafundamatrix}.

\smallskip

Conversely, since $F$ is a linear isomorphism (see Lemma \ref{l:funda}), it is not difficult to see that any limit of $\frac{F(m_j-m)}{a_m(m_j-m)^{\frac12}}$ is of the form $\frac{FZ}{a_m(Z)^{\frac12}}$. Using \eqref{e:equivF} and taking convex hulls, this proves that $\mathcal{H}\sqrt{a}(m)\subset B(m)$.
\end{proof}
%
%
%
%
%

\end{document}